\documentclass[letter, 12pt]{article}

\usepackage{amsmath}
\usepackage{graphicx, psfrag, epsf}
\usepackage{enumerate}
\usepackage{natbib}
\usepackage{url} 

\newcommand{\blind}{0}
\usepackage{xr}
\externaldocument{supplementary_materials/Theoretical_SGaSP_supplementary_materials}

\usepackage{comment}

\usepackage[margin=1in]{geometry}


\usepackage{setspace}
\doublespacing

\usepackage{amssymb, amsfonts, amsthm, bm,bbm}
\usepackage{mathtools, mathrsfs}

\usepackage{relsize} 
\usepackage{hyperref}

\newtheorem{theorem}{Theorem}
\newtheorem{lemma}{Lemma}

\newtheorem{corollary}{Corollary}

\newtheorem{example}{Example}


\DeclareMathOperator{\E}{\mathbb E}

\DeclareMathOperator{\V}{\mathbb V}

\DeclareMathOperator*{\argmax}{argmax} 
\DeclareMathOperator{\Length}{length}
\DeclareMathOperator{\argmin}{argmin}



\newcommand{\xbf}{\mathbf{x}}



\newcommand{\parenth}[1]{\left(#1\right)}
\newcommand{\sqbracket}[1]{\left[#1\right]}

\newcommand{\prob}{{\mathbb{P}}}

\newcommand{\GaSP}{\text{GaSP}}
\newcommand{\bx}{{\mathbf{x}}}
\newcommand{\calW}{{\mathcal{W}}}
\newcommand{\calX}{{\mathcal{X}}}
\newcommand{\calH}{{\mathcal{H}}}

\newcommand{\beginsupplement}{
\setcounter{table}{0}
\renewcommand{\thetable}{S\arabic{table}}
\setcounter{figure}{0}
\renewcommand{\thefigure}{S\arabic{figure}}
\setcounter{equation}{0}
\renewcommand{\theequation}{S\arabic{equation}}
\setcounter{lemma}{0}
\renewcommand{\thelemma}{S\arabic{lemma}}
\setcounter{theorem}{0}
\renewcommand{\thetheorem}{S\arabic{theorem}}
\setcounter{section}{0}
\renewcommand{\thesection}{S\arabic{section}}
}
\usepackage{color}


\begin{document}

	\if1\blind
	{
		\title{ \bf A theoretical framework of the scaled Gaussian stochastic process in prediction and calibration}
		\author{Author 1\thanks{
				The authors gratefully acknowledge \textit{please remember to list all relevant funding sources in the unblinded version}}\hspace{.2cm}\\
			Department of YYY, University of XXX\\
			and \\
			Author 2 \\
			Department of ZZZ, University of WWW}
		\maketitle
	} \fi
	
	\if0\blind
	{
        \title{ A theoretical framework of the scaled Gaussian stochastic process in prediction and calibration}
        \vspace{-.15in}
        \author{Mengyang Gu$^*$,  Fangzheng Xie$^{**}$ and Long Wang$^{**}$ \vspace{-.05in}\\
        $^{*}$Department of Statistics and Applied Probability, UC Santa Barbara  \vspace{-.1in}\\
        $^{**}$Department of Applied Mathematical and Statistics, Johns Hopkins University}
        \date{}
        \maketitle
    } \fi
	
\begin{abstract}
\noindent 


Model calibration or data inversion is one of fundamental tasks in uncertainty quantification.  In this work, we study the theoretical properties of the scaled Gaussian stochastic process (S-GaSP), to model the discrepancy between reality and imperfect mathematical models.  We  establish the explicit connection between Gaussian stochastic process (GaSP) and S-GaSP through the orthogonal series representation. The predictive mean estimator in the S-GaSP calibration model converges to the reality at the same rate as the GaSP with a suitable choice of the regularization and scaling parameters. We also show the calibrated mathematical model in the S-GaSP calibration converges to the one that minimizes the $L_2$ loss between the reality and mathematical model,  whereas the GaSP model with other widely used covariance functions does not have this property. 
Numerical examples confirm the excellent finite sample performance of our approaches compared to a few recent approaches. 



\bigskip

\noindent KEYWORDS:  model misspecification;  Bayesian prior; scaled Gaussian stochastic process prior; convergence; interpretability; orthogonal series representation.
\end{abstract}


\section{Introduction}
\label{sec:intro}
In scientific and engineering studies, mathematical models are developed by scientists and engineers based on their expert knowledge to reproduce the physical reality. With the rapid development of the computational technique in recent years, many mathematical models are implemented in computer code, often referred as computer models or simulators. 

Some parameters of the mathematical model are often unknown or unobservable in experiments. For example, the K{\= \i}lauea volcano recently has one of the biggest eruptions in 2018. The location and volume of the magma chamber, as well as the magma supply and storage rate of this volcano, however, is unobservable. Some field data, such as the satellite interferograms and GPS measurement of the ground deformation were used to estimate these parameters for the K{\= \i}lauea volcano  \citep{anderson2017abundant,anderson2019magma}. Using the field observations to estimate the parameters in the mathematical model, and to identify the possible discrepancy between the mathematical model and the reality is widely known as the model calibration or data inversion. 

For any observable input $\mathbf x \in \mathcal X$, denote $y^F(\mathbf x)$ as the field observation and $f^M(\mathbf x, \bm \theta)$ as a mathematical model with unobservable calibration parameters $\bm \theta \in \bm \Theta$. Furthermore, let $y^R(\mathbf x) = \E[y^F(\mathbf x)]$ represent the reality. A routinely used framework  to calibrate imperfect mathematical model is
\begin{align}
y^F(\mathbf x)= f^M(\mathbf x, \bm \theta) + \delta(\mathbf x) + \epsilon,
\label{equ:gp_calibration}
\end{align}
where $\epsilon$ is the noise and $\delta(\cdot)$ is a discrepancy function between the reality and mathematical model. Since the mathematical model is often developed by experts, we assume the mean and trend of the observations are properly modeled in the mathematical model. The discrepancy function was modeled as a Gaussian stochastic process (GaSP) in \cite{kennedy2001bayesian} and the framework has been widely studied in recent years \citep{goldstein2004probabilistic,bayarri2007framework,higdon2008computer}. As both the mathematical model and discrepancy function are jointly estimated in the calibration, the  predictions of the field data were found to be more accurate compared to the ones based on the mathematical model or a nonparametric regression alone. It was found in following-up studies that, however, the variability of the observations can be explained mostly by the discrepancy function in this approach, leaving the calibrated mathematical model far away from the reality, which results in an identifiability problem between the calibration parameters and discrepancy function \citep{tuo2016theoretical, plumlee2017bayesian}.

A few recent studies measure the goodness of calibration in terms of the $L_2$ loss between the calibrated mathematical model and reality  \citep{tuo2015efficient,wong2017frequentist}. These studies seek to find an estimator of $\bm \theta$ that converges to $\bm \theta_{L_2}$, which minimizes the $L_2$ distance between the reality and mathematical model, i.e.,
\begin{equation}
\bm \theta_{L_2} := \argmin_{\bm\theta \in \bm\Theta} \int_{\mathbf{x} \in \mathcal X} [y^R(\mathbf{x}) - f^M(\mathbf{x}, \bm\theta)]^2  d\mathbf{x}.
\label{equ:theta_L_2}
\end{equation}
In \cite{tuo2015efficient}, for instance, the reality is first estimated through a nonparametric regression model without the assistance of the mathematical model. The calibration parameters are then estimated by minimizing the $L_2$ loss between the calibrated mathematical model and the estimator of the reality. Consequently, the calibrated mathematical model by the two-step approach typically fits the observation in terms of $L_2$ distance. For some complex applications, however, it is crucial to jointly estimate the reality and calibration parameters, as the mathematical model is often developed based on expert knowledge, and thus helpful for predicting the reality.

In this work, we study the theoretical properties of the scaled Gaussian stochastic process (S-GaSP), a new approach for modeling the discrepancy function proposed in \cite{gu2018sgasp}. We establish the connection between GaSP and S-GaSP through the orthogonal representation of the process.
We show that the predictive mean from S-GaSP converges to the reality at the same rate as the one from GaSP with a suitable choice of the regularization and scaling parameters. Furthermore, with the same regularization and scaling parameters, the calibration parameters in the S-GaSP can also converge to $\bm \theta_{L_2} $, whereas GaSP calibration with other widely used kernels does not enjoy this property. Although these two convergence properties can be achieved using the aforementioned two-step approaches \citep{tuo2015efficient,wong2017frequentist},  finite sample studies suggest that the predictive accuracy of the reality improves in the  S-GaSP calibration, as the calibration parameters and discrepancy are jointly estimated.  Besides, since the sampling model is fully specified, the model and parameter uncertainty in the S-GaSP calibration can be naturally assessed through the posterior distributions in a Bayesian approach. A close comparison of S-GaSP and other approaches is detailed in Section \ref{sec:comparison}.

This paper is organized as follows. In Section \ref{sec:SGP}, we introduce the S-GaSP along with the orthogonal series representation and joint estimation in calibration.  Two convergence properties are discussed in Section \ref{sec:convergence}.
In Section~\ref{sec:discretized_sgasp}, we introduce the discretized S-GaSP along with the parameter estimation under the Frequentist framework and Bayesian framework. A comparison between the S-GaSP calibration with other alternatives are discussed in Section~\ref{sec:comparison}.
Section~\ref{sec:numerical} provides some numerical studies comparing the S-GaSP calibration approach and other approaches. We conclude this work in Section \ref{sec:conclusion}. The proof of the theoretical results and other supporting evidence of our approaches are given in the supplementary materials. The GaSP calibration, S-GaSP calibration and calibration without a discrepancy function are implemented in the  {\sf RobustCalibration} {\tt R} package available on CRAN. 


\section{The scaled Gaussian stochastic process}
\label{sec:SGP}

Denote $\delta(\cdot) \sim \GaSP(0, \,\sigma^2 K(\cdot,\, \cdot))$ with variance $\sigma^2$ and correlation function $K(\cdot,\cdot)$ such that, for any inputs $\{\mathbf x_i\}_{i=1}^n $, the marginal distribution $(\delta(\mathbf x_1),...,\delta(\mathbf x_n))^T$ follows a multivariate normal distribution with covariance $\mbox{Cov}(\mathbf x_i, \mathbf x_j)=\sigma^2 K(\mathbf x_i, \,\mathbf x_j)$. 
In order to have the mathematical model explain more variability, we introduce a new prior distribution of the discrepancy function, which places 
more probability mass on the smaller random $L_2$ distance between the mathematical model and reality, as this measure quantifies how well a mathematical model fits the reality. The scaled Gaussian stochastic process calibration model is defined as the following hierarchical model:
\begin{equation}
\label{equ:scaled_GP}
\begin{split}
&y^F(\mathbf x) = f^{M}(\mathbf x, \bm \theta)+  \delta_z(\mathbf x) +\epsilon, \\
&\delta_z(\mathbf x) = \left\{\delta(\mathbf x) \mid { \mathsmaller{\int}_{\bm{\xi}\in\mathcal X} }   \delta^2( \bm \xi ) d\bm \xi=Z \right\}, \\
&\delta(\cdot) \sim \text{GaSP}(0, \sigma^2 K(\cdot, \cdot)),\\
&Z\sim p_Z(\cdot), \quad \epsilon \sim \mbox{N}(0, \sigma^2_0 ),
\end{split}
\end{equation}
where  conditional on all parameters, the default choice of $p_Z(\cdot)$ is defined as
\begin{equation}
p_Z(z)=\frac{g_Z(z) p_{\delta}(Z=z)}
{\int_0^\infty g_Z(t) p_{\delta}(Z = t)dt},
\label{equ:p_z}
\end{equation}
with $g_Z(z)$ being a non-increasing scaling function and $p_{\delta}(Z=z)$ being the density of $Z$ at $z$ induced by a GaSP with mean $0$ and covariance $\sigma^2 K(\cdot,\, \cdot)$.

We call $\delta_z(\cdot)$ in (\ref{equ:scaled_GP}) the scaled Gaussian stochastic process (S-GaSP). Given $Z=z$, the S-GaSP becomes a GaSP constrained at the space $\int_{\mathbf x \in \mathcal X} \delta^2(\mathbf x)d \mathbf x = z$.  Note that $Z$ is  the $L_2$ distance between the reality and mathematical model. By construction, the measure for $Z$ induced by S-GaSP has more prior probability mass near 0 than the one by GaSP as $g(\cdot)$ is a non-decreasing function, reflecting one's belief that the mathematical model should be calibrated to fit the reality. 






It is easy to see that when $g_Z(\cdot)$ is a constant function, S-GaSP reduces to GaSP without any constraint. Conditioning on all parameters, we assume 
\begin{equation}
g_Z(z) = \frac{\lambda_z }{2\sigma^2 } \exp\left(-\frac {\lambda_z z}{2\sigma^2 }\right),
\label{equ:g_z}
\end{equation}
with a scaling parameter $\lambda_z$. We select $p_Z(\cdot)$ in (\ref{equ:p_z}) and $g_Z(\cdot)$ in (\ref{equ:g_z}) for the computational results in this work, as any marginal distribution of $\delta_z$ still follows a multivariate normal distribution \cite[Lemma 2.3]{gu2018sgasp}. Other scaling functions may also be used, but we do not pursue this direction in this study. 
 

\subsection{Orthogonal series representation and marginal distribution}
\label{subsec:orthogonal_seq}

Based on Karhunen-Lo{\`e}ve theorem, GaSP with a stationary kernel admits the following representation for any $\mathbf x \in \mathcal X$
\begin{equation}
\delta(\mathbf x)=\sigma \sum^{\infty}_{k=1}\sqrt{\rho_k} Z_k \phi_k(\mathbf x),
\label{equ:KL_gp}
\end{equation}
where $Z_k \overset{i.i.d.}{\sim} N(0, 1)$, $\rho_k$ and $\phi_k(\cdot)$ are the $k$th eigenvalue and eigenfunction of the kernel $K(\cdot, \cdot)$, respectively.  The S-GaSP can also be represented as an orthogonal series given below.
%
\begin{lemma}[Karhunen-Lo{\`e}ve expansion for the S-GaSP]
Assume $p_Z(\cdot)$ and $g_Z(\cdot)$  are defined in (\ref{equ:p_z}) and (\ref{equ:g_z}), respectively. For any $\mathbf x \in \mathcal X$, the S-GaSP defined in (\ref{equ:scaled_GP}) has the following representation
\[\delta_z(\mathbf x)=\sigma \sum^{\infty}_{k=1}\sqrt{\frac{\rho_k}{1+\lambda_z \rho_k}}Z_k \phi_k(\mathbf x),\] where $Z_k \overset{i.i.d.}{\sim} N(0, 1)$, $\rho_k$ and $\phi_k(\cdot)$ are the $k$th eigenvalue and eigenfunction of the kernel $K(\cdot, \cdot)$, respectively.
\label{lemma:KL_sgp}
\end{lemma}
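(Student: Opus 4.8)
The plan is to reduce the statement to the known Karhunen--Lo\`eve expansion (\ref{equ:KL_gp}) of the underlying GaSP, combined with a change-of-measure (tilting) argument along the single scalar $Z=\int_{\mathbf x\in\mathcal X}\delta^2(\mathbf x)\,d\mathbf x$. First I would substitute the GaSP expansion (\ref{equ:KL_gp}) into the definition of $Z$. Since $\{\phi_k\}_{k=1}^{\infty}$ is orthonormal in $L_2(\mathcal X)$, Parseval's identity gives $Z=\sigma^2\sum_{k=1}^{\infty}\rho_k Z_k^2$, the series converging almost surely because the kernel is trace class, $\sum_k\rho_k<\infty$. This rewrites the constraining variable $Z$ purely in terms of the i.i.d.\ standard normal coordinates $(Z_k)$ that already encode the GaSP.

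Next I would identify the S-GaSP as a tilted version of the GaSP. The hierarchical definition (\ref{equ:scaled_GP}) together with (\ref{equ:p_z}) says precisely that the S-GaSP law is obtained by disintegrating the GaSP law along $Z$ and reweighting the marginal of $Z$ by the scaling function $g_Z$. Writing $\mathbb{P}_{\mathrm{GaSP}}(\cdot\mid Z=z)$ for a regular conditional distribution of $\delta$ given $Z=z$, one obtains for any measurable set $A$ the chain
\[\mathbb{P}_{\mathrm{SGaSP}}(\delta_z\in A)=\int_0^\infty \mathbb{P}_{\mathrm{GaSP}}(\delta\in A\mid Z=z)\,\frac{g_Z(z)\,p_{\delta}(Z=z)}{c}\,dz=\frac{1}{c}\,\mathbb{E}_{\mathrm{GaSP}}\!\left[\mathbf 1_A\,g_Z(Z)\right],\]
with $c=\int_0^\infty g_Z(t)\,p_{\delta}(Z=t)\,dt$. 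Hence the S-GaSP is absolutely continuous with respect to the GaSP with Radon--Nikodym derivative $g_Z(Z)/c$; the reweighting is constant on each level set $\{Z=z\}$, consistent with the conditional law being unchanged.

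I would then transport this tilt to coefficient space and factor it. The map $\delta\mapsto(Z_k)_k$ pushes the GaSP forward to the standard product Gaussian on $\mathbb{R}^{\infty}$, and under this map $g_Z(Z)\propto\exp\!\big(-\tfrac{\lambda_z}{2}\sum_k\rho_k Z_k^2\big)$ by (\ref{equ:g_z}) and the first step, since the $\sigma^2$ in $g_Z$ cancels against $Z=\sigma^2\sum_k\rho_k Z_k^2$. Therefore the law of the coordinates under the S-GaSP has density proportional to $\prod_k\exp\!\big(-\tfrac{1}{2}(1+\lambda_z\rho_k)Z_k^2\big)$, a product measure whose $k$th factor is $N\!\big(0,(1+\lambda_z\rho_k)^{-1}\big)$. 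Writing each coordinate as $(1+\lambda_z\rho_k)^{-1/2}\tilde Z_k$ with $\tilde Z_k\iid N(0,1)$ and inserting into (\ref{equ:KL_gp}) yields exactly $\delta_z(\mathbf x)=\sigma\sum_{k=1}^{\infty}\sqrt{\rho_k/(1+\lambda_z\rho_k)}\,\tilde Z_k\,\phi_k(\mathbf x)$.

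The main obstacle will be making the infinite-dimensional change of measure rigorous rather than a formal manipulation of densities. I expect the two delicate points to be: (i) the existence of the regular conditional distribution (disintegration) of the GaSP given $Z=z$ and the finiteness of the normalizer $c$, which can be secured by viewing $\delta$ as a random element of the Polish space $L_2(\mathcal X)$ and $Z$ as a continuous functional; and (ii) verifying that the tilted product measure is a genuine probability measure mutually absolutely continuous with the GaSP, which follows from a Kakutani-type criterion because $\sum_k\log(1+\lambda_z\rho_k)\le\lambda_z\sum_k\rho_k<\infty$. Once these are in place, the factorization in the third step immediately gives the independence of the coefficients and their variances, completing the proof.
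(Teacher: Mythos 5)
Your proposal is correct and follows essentially the same route as the paper: both identify the S-GaSP as the GaSP reweighted by $g_Z(Z)$, use Parseval's identity to write $Z=\sigma^2\sum_k\rho_k Z_k^2$ in the Karhunen--Lo{\`e}ve coordinates, and observe that the exponential tilt then factorizes coordinatewise into independent $N\left(0,(1+\lambda_z\rho_k)^{-1}\right)$ laws. The only difference is how the infinite-dimensionality is handled: the paper computes the tilted joint density of the finite-dimensional marginals $(Z_1,\dots,Z_k,W_k)$ with $W_k=\sum_{i>k}\rho_iZ_i^2$, integrates out $W_k$, and lets $k$ be arbitrary, whereas you perform the change of measure directly on the infinite product and appeal to a Kakutani-type criterion.
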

The covariance function of the S-GaSP  can also be decomposed as an infinite orthogonal series, which is an immediate consequence of the fact that the S-GaSP is indeed a GaSP with a transformed kernel (see Lemma 2.3 in \cite{gu2018sgasp} and Lemma~\ref{lemma:KL_sgp}).


\begin{corollary}
Assume $p_Z(\cdot)$ and $g_Z(\cdot)$ are defined in (\ref{equ:p_z}) and (\ref{equ:g_z}), respectively. The marginal distribution of the S-GaSP defined in (\ref{equ:scaled_GP}) follows a multivariate normal distribution 
\begin{equation*}
[\delta_z(\mathbf x_1),..., \delta_z(\mathbf x_n) \mid \sigma^2 \mathbf R_z] \sim \mbox{MN} (\mathbf 0, \sigma^2 \mathbf R_z),
\end{equation*}
where the $(i, j)$ entry of $\mathbf R_z$ is 
\begin{equation}
K_z(\mathbf x_i, \mathbf x_j) = \sum^{\infty}_{k=1} \frac{\rho_k}{1+\lambda_z \rho_k} \phi(\mathbf x_i) \phi(\mathbf x_j).
\label{equ:Mercer_sgp}
\end{equation}
\label{corollary:Mercer_sgp}
\end{corollary}
Corollary~\ref{corollary:Mercer_sgp} implies that the $i$th eigenvalue of the kernel function $K_z(\cdot, \, \cdot)$ in the S-GaSP is $\rho_{z,k}:= \rho_k/(1+\lambda_z\rho_k)$ and the $k$th eigenfunction $\phi_k(\cdot)$ is the same as the one in the GaSP. The form (\ref{equ:Mercer_sgp}) does not give an explicit expression for the kernel in the S-GaSP. Instead of truncating the series,  one may discretize the integral $\int_{\mathbf x \in \mathcal X} \delta^2(\mathbf x)d \mathbf x$, which leads to an explicit expression of the covariance matrix, discussed in Section~\ref{sec:discretized_sgasp}.

The following Corollary~\ref{corollary:Z_sgp} provides a decomposition of $Z$ in the S-GaSP, which follows from Lemma 2.1 in \cite{gu2018sgasp} and Corollary~\ref{corollary:Mercer_sgp}.
\begin{corollary}
\label{corollary:Z_sgp}
Assume the same conditions in Lemma \ref{lemma:KL_sgp} hold. The distribution of $Z = \int_{\mathbf x \in \mathcal X} \delta^2(\mathbf x)d \mathbf x$ induced by the S-GaSP follows 
\[Z \sim \sigma^2 \sum^{\infty}_{k=1}\frac{\rho_k}{1+\lambda_z \rho_k} \chi^2_k(1),\]
where $\{\chi^2_k(1)\}_{k=1}^\infty$ are independent chi-squared random variables with one degree of freedom.
\end{corollary}  
%

Denote $\mathcal H$ and $\mathcal H_z$ as the reproducing kernel Hilbert space attached to GaSP with kernel $K(\cdot, \cdot)$ and S-GaSP with kernel $K_z(\cdot, \cdot)$, respectively.  Let the native norm associated with $K(\cdot, \cdot)$ and $K_z(\cdot, \cdot)$ be $\langle\cdot, \cdot\rangle_{H}$ and $\langle\cdot, \cdot\rangle_{H_z}$, respectively. We conclude this subsection by the explicit connection between the inner product of GaSP and that of S-GaSP. 
\begin{lemma}
Assume $p_Z(\cdot)$ and $g_Z(\cdot)$  are defined in (\ref{equ:p_z}) and (\ref{equ:g_z}), respectively. Let  $h(\cdot)=\sum^{\infty}_{i=1}  h_i \phi_i(\cdot)$ and $g(\cdot)=\sum^{\infty}_{i=1}  g_i \phi_i(\cdot)$ be the elements in $\mathcal H$. It holds that
\[ \langle h, g \rangle_{\mathcal H_z} =\langle h, g \rangle_{\mathcal H} + \lambda_z \langle h, g  \rangle_{L_2(\mathcal X)}.
\]
\label{lemma:RKHS_comparison}
\end{lemma}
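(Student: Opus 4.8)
The plan is to pass to coordinates in the orthonormal $L_2(\mathcal X)$ basis $\{\phi_k\}$ and reduce the identity to a termwise manipulation of eigenvalues, relying on two facts already in place: the Mercer decomposition of the S-GaSP kernel from Corollary~\ref{corollary:Mercer_sgp}, and the general series formula for an RKHS inner product recalled in Section~\ref{sec:GP}, namely that the RKHS of a kernel with eigenpairs $(\tau_k,\phi_k)$ consists of $f=\sum_k f_k\phi_k\in L_2(\mathcal X)$ with $\sum_k f_k^2/\tau_k<\infty$, and has inner product $\langle f,g\rangle=\sum_k f_kg_k/\tau_k$.

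First I would record the eigenstructure of $\mathcal H_z$. By Corollary~\ref{corollary:Mercer_sgp}, the kernel $K_z$ shares the eigenfunctions $\phi_k$ with $K$ and has eigenvalues $\rho_{z,k}=\rho_k/(1+\lambda_z\rho_k)$. Applying the general formula above to $K_z$, the inner product on $\mathcal H_z$ is $\langle f,g\rangle_{\mathcal H_z}=\sum_k f_kg_k/\rho_{z,k}$ for elements with finite $\mathcal H_z$-norm.

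Next I would verify that the left-hand side is even well-defined for $h,g\in\mathcal H$, i.e.\ that $\mathcal H\subseteq\mathcal H_z$. Since the eigenvalues are non-increasing, $\rho_k\le\rho_1$, so any $h\in\mathcal H$ obeys $\sum_k h_k^2\le\rho_1\sum_k h_k^2/\rho_k=\rho_1\|h\|_{\mathcal H}^2<\infty$, whence $h\in L_2(\mathcal X)$. Using $1/\rho_{z,k}=1/\rho_k+\lambda_z$, this gives $\sum_k h_k^2/\rho_{z,k}=\|h\|_{\mathcal H}^2+\lambda_z\|h\|_{L_2(\mathcal X)}^2<\infty$, so $h\in\mathcal H_z$ (and likewise $g$), and $\langle h,g\rangle_{\mathcal H_z}$ is meaningful.

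The computation itself then substitutes $1/\rho_{z,k}=1/\rho_k+\lambda_z$ and splits the sum:
\[ \langle h,g\rangle_{\mathcal H_z}=\sum_{k=1}^{\infty}\frac{h_kg_k}{\rho_{z,k}}=\sum_{k=1}^{\infty}\frac{h_kg_k}{\rho_k}+\lambda_z\sum_{k=1}^{\infty}h_kg_k. \]
Both series converge absolutely by Cauchy--Schwarz, since $\sum_k|h_kg_k|/\rho_k\le\|h\|_{\mathcal H}\|g\|_{\mathcal H}$ and $\sum_k|h_kg_k|\le\|h\|_{L_2(\mathcal X)}\|g\|_{L_2(\mathcal X)}$, so the split is legitimate. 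I would then identify the first sum as $\langle h,g\rangle_{\mathcal H}$ and, by the $L_2$-orthonormality of $\{\phi_k\}$, the second as $\langle h,g\rangle_{L_2(\mathcal X)}$, which yields the claim. There is no serious obstacle here; the only point requiring care is the bookkeeping, namely confirming $h,g\in\mathcal H_z$ and justifying the termwise splitting by absolute convergence rather than assuming it.
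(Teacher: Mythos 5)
Your proposal is correct and follows essentially the same route as the paper's proof: both use the Mercer decomposition of $K_z$ with eigenvalues $\rho_{z,k}=\rho_k/(1+\lambda_z\rho_k)$, the series formula $\langle h,g\rangle_{\mathcal H_z}=\sum_k h_kg_k/\rho_{z,k}$, and the identity $1/\rho_{z,k}=1/\rho_k+\lambda_z$ to split the sum. Your additional checks that $\mathcal H\subseteq\mathcal H_z$ and that the termwise splitting is justified by absolute convergence are sound bookkeeping that the paper leaves implicit.
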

\subsection{Joint estimation in the S-GaSP calibration}
\label{subsec:estimation_SGaSP}

With the specification of $p_Z(\cdot)$ in (\ref{equ:p_z}) and $g_Z(\cdot)$ in (\ref{equ:g_z}), after marginalizing out $\bm \delta_z = [\delta_z(\mathbf x_1),$ $..., \delta_z(\mathbf x_n)] $, the marginal distribution of the field observations in (\ref{equ:scaled_GP}) follows a multivariate normal distribution
\begin{equation}
[\mathbf y^F  \mid \bm \theta,   \, \sigma^2_0, \, \lambda, \, \lambda_z] \sim \mbox{MN}(\mathbf f^M_{\bm \theta},\,  \sigma^2_0((n \lambda)^{-1} \mathbf {R}_z +\mathbf I_n) )
\label{equ:y_F_SGP}
\end{equation}
with the regularization parameter $\lambda := \sigma^2_0/ (n\sigma^2)$ and the $(i,\,j)$ entry of $\mathbf R_z$ defined in (\ref{equ:Mercer_sgp}).



Denote $ \mathcal L_z( \bm \theta)$ as the likelihood for $\bm \theta$  in (\ref{equ:y_F_SGP}). We show below that the following joint estimator of $(\bm \theta, \delta_z(\cdot))$ can be written as a penalized kernel ridge regression estimator (KRR), where both the RKHS norm and $L_2$ norm of the discrepancy function are penalized simultaneously. 

\begin{lemma}
The maximum likelihood estimator $\hat {\bm \theta}_{\lambda, \lambda_z, n}:= \argmax_{\bm \theta \in \bm \Theta} \mathcal L_{z}(\bm \theta)$ and predictive mean  $\hat \delta_{\lambda, \lambda_z, n}(\cdot):=\E[ \delta_z(\cdot) \mid \mathbf y^F, \hat {\bm \theta}_{\lambda, \lambda_z, n},  \lambda,\lambda_z ]$ are the same as the estimators of the penalized KRR,
\begin{align}
(\hat {\bm \theta}_{\lambda,\lambda_z, n}, \hat \delta_{\lambda,\lambda_z, n}(\cdot)) &= \underset{\delta(\cdot) \in \mathcal H,\, \bm \theta \in \bm \Theta}{\argmin} \frac{1}{n}\sum^n_{i=1}(y^F(\mathbf x_i) -f^M(\mathbf x_i, \bm \theta) -\delta(\mathbf x_i))^2  +\lambda \|\delta\|^2_{\mathcal H_z}
\label{equ:KRR_sgp}
\end{align}
with  $\|\delta\|^2_{\mathcal H_z}= \|\delta\|^2_{\mathcal H}+\lambda_z \|\delta\|^2_{L_2(\mathcal X)}$.
\label{lemma:sgp_est}
\end{lemma} 


In Lemma \ref{lemma:sgp_est}, both the $L_2$ norm and native norm of the discrepancy function are penalized in S-GaSP calibration. When the discrepancy function is modeled as a GaSP, however,  the $L_2$ norm of the discrepancy function is not penalized (see supplementary materials).  This property of the S-GaSP calibration is the key to guarantee that, under some regularity conditions, the estimated calibration parameters in joint estimator converges to $\bm \theta_{L_2}$ defined in (\ref{equ:theta_L_2}).
A more detailed discussion is provided in Section~\ref{sec:convergence}.





\section{Convergence properties of the S-GaSP calibration}
\label{sec:convergence}
We discuss two convergence properties of the S-GaSP calibration in this section. First, the predictive mean estimator of the reality converges to the truth at the optimal rate with a suitable choice of the regularization and scaling parameters. Second, the estimated calibration parameters by S-GaSP calibration converge to $\bm \theta_{L_2}$ when sample size increases. These two properties are obtained by jointly estimating the discrepancy function and calibration parameters in (\ref{equ:KRR_sgp}).

\subsection{Convergence to the reality}
\label{subsec:nonparametric_SGP}

Let us first consider the following nonparametric regression model, 
\begin{equation}
y(\mathbf x_i)=f(\mathbf x_i)+\epsilon_i, \quad \epsilon_i \overset{i.i.d.}{\sim} \mbox{N}(0, \, \sigma^2_0),\quad i = 1,\ldots,n,
\label{equ:nonparametric}
\end{equation}
where $f$ is assumed to follow the zero-mean S-GaSP prior with the default choice of $p_Z(\cdot)$ and $g_Z(\cdot)$ in (\ref{equ:p_z}) and (\ref{equ:g_z}), respectively. This is a special case where the mathematical model is zero and we will soon extend it to the general case when the mathematical model is not zero. For illustration purposes, we follow \cite{tuo2015efficient} to assume that $\mathbf x_1,\ldots,\bx_n$ are independently sampled from $\mbox{Unif}( [0,\, 1]^{p})$. 

Assume the underlying truth $f_0(\cdot):= \E_y[y(\cdot)]$ resides in the $p$-dimensional Sobolev space
\begin{equation}
\mathcal W^{m}_2(\mathcal X)= \left\{f(\cdot)=\sum^{\infty}_{k=1} f_k \phi_k(\cdot)\in L_2(\mathcal X): \sum^{\infty}_{k=1} k^{2m/p} f^2_k <\infty\right\}
\label{equ:sobolev_W}
\end{equation}
with smoothness $m>p/2$ and $\{\phi_k(\cdot)\}^{\infty}_{k=1}$ being a sequence of the orthonormal basis of $L_2(\mathcal X)$. For any integer vector $\mathbf k=(k_1,...,k_{p})^T$ and a function $f(x_1,...,x_{p}): \mathcal X  \to \mathbb R$, denote $D^{\mathbf k}$ the mixed partial derivative operator $D^{\mathbf k} f(\cdot):= \partial^{|\mathbf k|}f(\cdot)/\partial^{k_1}x_1...\partial^{k_{p}}x_{p}$ with  $|\mathbf k|=\sum^{p}_{i=1} k_i $. For any function in $\mathcal W^{m}_2(\mathcal X)$, we have $\|D^{\mathbf k} f(\cdot)\|_{L_2(\mathcal X)}<\infty$ for any $|\mathbf k|<m$.  

Recall that $\lambda=\sigma^2_0/(n\sigma^2)$ in (\ref{equ:y_F_SGP}). By Lemma~\ref{lemma:sgp_est}, the posterior mean estimator of $f(\cdot)$ with a S-GaSP prior is equivalent to the KRR estimator below 
\begin{equation}
\hat f_{\lambda, \lambda_z, n}=\underset{f \in \mathcal H}{\argmin} \left[ \frac{1}{n}\sum^n_{i=1}(y(\mathbf x_i) - f(\mathbf x_i))^2+\lambda \| f\|^2_{\mathcal H}+\lambda \lambda_z  \| f\|^2_{ L_2(\mathcal X)}\right].
\end{equation}

Recall $\{\rho_k\}_{k = 1}^\infty$ and $\{\phi_k\}^{\infty}_{k=1}$ are the sequence of the eigenvalues and eigenfunctions of the reproducing kernel $K(\cdot, \cdot)$ associated with $\mathcal H$, respectively. For all $k$, we assume the eigenvalues satisfy 
\begin{equation}
c_\rho k^{-2m/p} \leq \rho_k \leq C_\rho k^{-2m/p},
\label{equ:eigen_value_decay}
\end{equation}
for some constants $c_\rho$ and $C_\rho>0$. For all $k \in \mathbb N^+$ and $\mathbf x\in \mathcal X$, we assume the eigenfunctions are bounded uniformly,
\begin{equation}
\sup_{\bx\in\calX}|\phi_k(\mathbf x)| \leq  C_{\phi},
\label{equ:eigen_function_decay}
\end{equation}
where $C_{\phi} > 0$ is a constant depending on the kernel $K(\cdot, \cdot)$. 







We are now ready to state the convergence rate of the S-GaSP for the nonparametric regression model in (\ref{equ:nonparametric}). 

\begin{theorem}
Assume the eigenvalues and eigenfunctions of $K(\cdot, \cdot)$ satisfy (\ref{equ:eigen_value_decay}) and (\ref{equ:eigen_function_decay}), respectively. Further assume $f_0 \in \mathcal W^{m}_2(\mathcal X) $ and denote $\beta := (2m-p)^2 / \{2m(2m+p)\}$. Consider the nonparametric regression model (\ref{equ:nonparametric}). For sufficiently large $n$,  any $\alpha>2$ and $C_{\beta} \in (0,1)$, with probability at least $1-\exp\{-(\alpha-2)/3\}-\exp\left(-n^{C_{\beta}\beta}\right)$,
\begin{align*}
\|\hat f_{\lambda, \lambda_z, n}- f_0 \|_{L_2(\mathcal X)}
\leq 2 \left[\sqrt{2}\left(\|f_0\|_{L_2(\mathcal X)}+\|f_0 \|_{\mathcal H}\right)+C_K\sigma_0\sqrt{\alpha} \right] n^{-\frac{m}{2m+p}}
\end{align*}
and
\begin{align*}
\|\hat f_{\lambda, \lambda_z, n}- f_0 \|_{\calH}
\leq 2 \left[\sqrt{2}\left(\|f_0\|_{L_2(\mathcal X)}+\|f_0 \|_{\mathcal H}\right)+C_K\sigma_0\sqrt{\alpha}\right]
\end{align*}
by choosing $\lambda=n^{-2m/(2 m+ p )}$ and $\lambda_z= {\lambda}^{-1/2}$, where $C_K$ is a constant only depending on the kernel $K(\cdot, \cdot)$. 
\label{thm:nonparametric}
\end{theorem}


The proof of Theorem \ref{thm:nonparametric} is more challenging compared to the proof of convergence of Gaussian stochastic process regression in \cite{yang2017frequentist}. First  $\lambda_z$ can go to infinity in Theorem \ref{thm:nonparametric}, and consequently $||\delta||_{\mathcal H_z}$ is unbounded,  wheres $||\delta||_{\mathcal H}$ was typically assumed bounded in proving the convergence of a nonparametric regression approach. Second we generalize the proof to multivariate inputs. Thus, we substantially modify the tools to  prove Theorem \ref{thm:nonparametric}, given in the supplementary materials. 




The conditions in Theorem \ref{thm:nonparametric} can be relaxed in various ways. From the proof of Theorem~\ref{thm:nonparametric}, it is easy to see that if $\lambda=O(n^{-2m/(2 m+ p )})$ and $\lambda_z\leq O(\lambda^{-1/2})$, the estimator still converges to the truth in $L_2$ distance with the same rate $O(n^{-m/(2m+p)})$. Second the design can be generalized to other space filling design. Besides, although the stationarity of the process is often assumed for the computational purpose, it is not required in Theorem~\ref{thm:nonparametric}. Note the regularity parameter and kernel parameters are held fixed in Theorem \ref{thm:nonparametric}. We discuss the estimation of $\lambda$ and the parameters in the kernel function in Section~\ref{subsec:parameter_est}.




We are ready to discuss the convergence of estimating the reality in the calibration. The estimator for the reality in the S-GaSP calibration model is defined as follows
\[\hat y^R_{\lambda, \lambda_z, n}(\mathbf x):= f^M(\mathbf x, \hat {\bm \theta}_{\lambda, \lambda_z, n})+ \hat \delta_{\lambda, \lambda_z, n}(\mathbf x)\]
for any $\mathbf x\in \mathcal X$, where $(\hat {\bm \theta}_{\lambda, \lambda_z, n}, \hat \delta_{\lambda, \lambda_z, n})$ is the estimator of the penalized KRR obtained by minimizing the loss in (\ref{equ:KRR_sgp}). The following Corollary~\ref{corollary:prediction_bound} gives the convergence rate of the S-GaSP calibration model in predicting the reality. Similar to the extensions for Theorem \ref{thm:nonparametric}, the conditions in Corollary \ref{corollary:prediction_bound} can be relaxed by letting $\lambda  = O(n^{-2m/(2m+p)})$ and $\lambda_z\leq O(\lambda^{-1/2})$  to obtain the same convergence rate.


\begin{corollary}
Assume $y^R(\cdot)-f^M(\cdot, {\bm{\theta}}) \in  \mathcal W^{m}_2(\mathcal X)$ for any ${\bm{\theta}}\in\mathbf{\Theta}$ and $sup_{{\bm{\theta}}\in\mathbf{\Theta}}\|y^R(\cdot)- f^M(\cdot, {\bm{\theta}})\|_{\mathcal H}<\infty$.
Let the eigenvalues and eigenfunctions of $K(\cdot, \cdot)$ satisfy (\ref{equ:eigen_value_decay}) and (\ref{equ:eigen_function_decay}), respectively. For sufficiently large $n$, any $\alpha>2$ and $C_{\beta} \in (0,1)$, with probability at least $1-\exp\{-(\alpha-2)/3\}-\exp(-n^{C_{\beta}\beta})$,
\begin{align*}
\| \hat y^R_{\lambda, \lambda_z, n}(\cdot) - y^R(\cdot)\|_{L_2(\mathcal X)} &\leq 2 \bigg[ \bigg. \sqrt{2}\bigg( \bigg.\sup_{{\bm{\theta}}\in\mathbf{\Theta}}\|y^R(\cdot)- f^M(\cdot, {\bm{\theta}})\|_{L_2(\mathcal X)} \\
& \quad +\sup_{{\bm{\theta}}\in\mathbf{\Theta}}\|y^R(\cdot)- f^M(\cdot, {\bm{\theta}})\|_{\mathcal H} \bigg) \bigg.+C_K\sigma_0 \sqrt{\alpha}  \bigg. \bigg] n^{-\frac{m}{2m+p}},
\end{align*}
by choosing $\lambda=n^{-2m/(2 m+ p )}$ and  $\lambda_z =\lambda^{-1/2}$, where $C_K$ is a constant only depending on the kernel $K(\cdot, \cdot)$ and $\beta = (2m-p)^2 / (2m(2m+p))$.  
\label{corollary:prediction_bound}
\end{corollary}

We illustrate the convergence using the following function studied in \cite{yang2017frequentist}, where  $y^R(\cdot)$ lies in the Sobolev space $\mathcal W^m_2(\mathcal X)$ with $m=3$ and $\mathcal X=[0,1]$.
\begin{example}
Let the reality be $y^R(x)=2\sum^{\infty}_{j=1}j^{-6}\cos(5\pi (j-0.5)x)\sin( 5j)$, and consider $y^F( x)=y^R(x)+\epsilon$, where $\epsilon \sim N(0,0.05^2)$ independently. Let  $f^M(x,\theta)=\theta$.  The goal is to predict $y^R(x)$ at $x\in [0,\,1]$ and estimate $\theta$. 
\label{eg:eg1}
\end{example}
As a motivating example, we let $K(\cdot, \cdot)$ follow the Mat{\'e}rn kernel in (\ref{equ:matern_5_2}) with the range parameter $\gamma=1$, as the reproducing kernel Hilbert space attached to the GaSP with this kernel is equal to Sobolev space $\mathcal W^3_2(\mathcal X)$.  We test 50 configurations with the number of observations $n \in [\exp(5), \exp(10)]$, and the design points $\{x_i\}_{i = 1}^n$ are equally spaced in $[0,1]$. In each configuration, $N=100$ simulation replicates are implemented. We first compute the average root of the mean squared error below: 
\begin{equation}
\mbox{AvgRMSE}_{f^M+\delta}=\frac{1}{N}\sum^N_{i=1}  \sqrt{\frac{1}{n^*}\sum^{n^*}_{j=1} (\hat y^R_i(\mathbf  x^*_j) - y^R_i(\mathbf x^*_j) )^2},
\label{equ:AvgRMSE_fm_delta}
\end{equation}
where $\hat y^R_i(\mathbf x^*_j)$ is an estimator of the reality at $\mathbf x^*_j$ for $j=1,...,n^*$. The subscript $f^M+\delta$ indicates both the calibrated mathematical model and discrepancy function can be used for prediction. 

\begin{figure}[t]
\centering
\begin{tabular}{cc}
\includegraphics[height=.35\textwidth,width=.5\textwidth ]{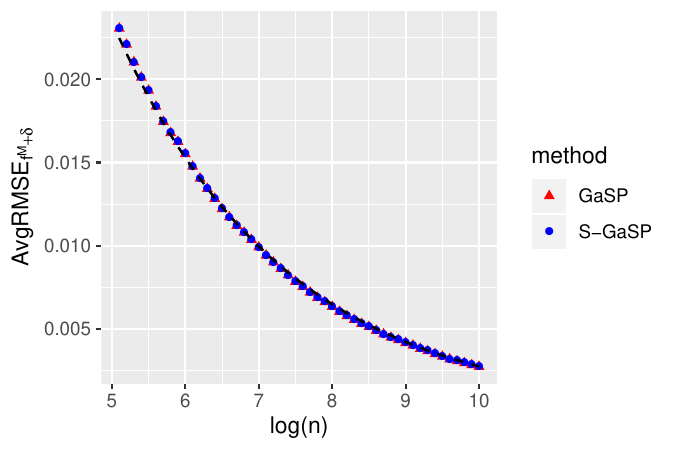}
\includegraphics[height=.35\textwidth,width=.5\textwidth]{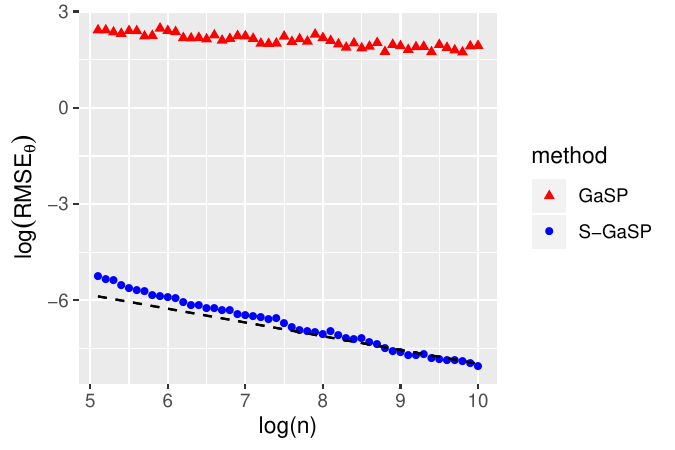} 
\end{tabular}
\caption{Prediction and calibration by the GaSP and discretized S-GaSP calibration models for Example~\ref{eg:eg1}.  In the left panel, the $\mbox{AvgRMSE}_{f^M+\delta}$ of the GaSP calibration and that of the discretized S-GaSP calibration are graphed as the red triangles and blue dots, respectively; the black curve is $n^{-m/(2m+p)}/5$, representing the  upper bound by Corollary~\ref{corollary:prediction_bound} (up to a constant).   In the right panel, the natural logarithm of the $\mbox{RMSE}_{\bm \theta}$ of the GaSP calibration and that of the discretized S-GaSP calibration are graphed as the red triangles and blue circles, respectively; the black line is $\log(n^{-m/(2m+p)}/40)$, the  upper bound from Theorem \ref{thm:L_2_convergence} (up to a constant).   $\lambda=n^{-2m/(2m+p)}\times 10^{-4}$ and $\lambda_z=\lambda^{-1/2}$ are assumed. 
 }
\label{fig:fixed_lambda_GP_SGP}
\end{figure}


 For both GaSP and S-GaSP calibration, the joint estimator, i.e. the predictive mean of the reality and maximum marginal likelihood estimator of the calibration parameters discussed in Lemma \ref{lemma:sgp_est}, is implemented for each experiment at each configuration.  In the left panel of Figure~\ref{fig:fixed_lambda_GP_SGP}, the predictive mean estimator of the reality in both GaSP and S-GaSP estimator converges to the reality at the same rate, which matches the theoretical upper bound from Corollary~\ref{corollary:prediction_bound}. Here for computational purpose, we graph the results of discretized S-GaSP calibration, which replaces the integral $\int_{\mathbf  x \in \mathcal X} \delta^2(\mathbf x) d\mathbf x$ in the S-GaSP model in (\ref{equ:scaled_GP}) by $(1/n) \sum^n_{i=1} \delta^2(\mathbf x_i)$ in Figure \ref{fig:fixed_lambda_GP_SGP}. The  discretized S-GaSP calibration is discussed in Section \ref{sec:discretized_sgasp}.

To evaluate whether the calibrated mathematical model (here only a mean parameter) fits the data, we use the root of the mean squared error between the estimator of the calibration parameters and the $L_2$ minimizer $\bm \theta_{L_2}$ as follows $\mbox{RMSE}_{\bm \theta}=\sqrt{\frac{1}{N}\sum^N_{i=1} (\hat {\bm \theta}_i- \bm \theta_{L_2} )^2}$, where $\hat {\bm \theta}_i$ is an estimator of $\bm \theta$ in the $i$th experiment. 

Although the GaSP and the S-GaSP perform equally well in prediction for Example \ref{eg:eg1}, the estimator of the calibration parameter in the discretized S-GaSP calibration converges to the $L_2$ minimizer, but that in the GaSP calibration does not converge to $\theta_{L_2}$, shown in the right panel of Figure~\ref{fig:fixed_lambda_GP_SGP}. This problem is caused by the difference between the RKHS norm and the $L_2$ norm. As illustrated in  Lemma~\ref{lemma:sgp_est}, both the RKHS norm and $L_2$ norm of the discrepancy function are penalized in the S-GaSP calibration model, whereas the GaSP calibration model does not penalize the $L_2$ norm of the discrepancy function. In  Section~\ref{subsec:theta_convergence}, we further show that under some regularity conditions, the calibrated parameters in the S-GaSP calibration do converge to  the $L_2$ minimizer with the same choice of the regularization parameter and scaling parameter in Corollary \ref{corollary:prediction_bound}.

\subsection{Convergence of calibration parameters}
\label{subsec:theta_convergence}
We first list some regularity conditions for the convergence of calibration parameters.
  

\begin{enumerate}[{A}1]
\item $\bm \theta_{L_2}$ is the unique solution of (\ref{equ:theta_L_2}) and it is an interior point of $\bm \Theta$.
\item The Hessian matrix $\int  \frac{ \partial^2 (y^R(\mathbf x)-f^M(\mathbf x, {\bm{\theta}} ) )^2}{\partial \bm \theta \partial \bm \theta^T} d\mathbf x$ is invertible in a neighborhood of $\bm \theta_{L_2}$. 
\item  For all $j=1,...,q$, it holds that $\sup_{\bm \theta \in \bm\Theta}    \left\Vert \frac{\partial f^M(\cdot, \bm \theta)}{\partial \theta_j} \right\Vert_{\mathcal H}<\infty$. 
\item The function class $\{y^R(\cdot)- f^M(\cdot, \bm \theta):{\bm{\theta}}\in\bm\Theta\}$ is Donsker.
\item  $\sup_{\bm \theta \in \bm \Theta} \|y^R(\cdot)- f^M(\cdot, \bm \theta)\|_{\mathcal H}<\infty$.
\item The eigenvalues and eigenfunctions of $K(\cdot, \cdot)$ satisfy (\ref{equ:eigen_value_decay}) and (\ref{equ:eigen_function_decay}), respectively.


\end{enumerate}

Assumptions A1 to A3 are regularity conditions of $\bm \theta_{L_2}$ and the mathematical model $y^M(\cdot,{\bm{\theta}})$ around $\bm \theta_{L_2}$. Assumptions A4 to A6 guarantee the KRR estimator $\hat \delta_{z,\bm \theta}$ converges to $y^R(\cdot)-f^M(\cdot, \bm \theta)$ uniformly for each $\bm \theta \in \bm \Theta$ in terms of the $L_2$ loss. We have the following result in Theorem \ref{thm:L_2_convergence} that guarantees the convergence of calibration parameters. As the calibration parameters and discrepancy function are estimated jointly in our approach, we extend the tools of proving the convergence of the two-step calibration approach  \citep{tuo2015efficient} to prove Theorem \ref{thm:L_2_convergence}, given in the supplementary materials.



\begin{theorem}
Under assumptions A1 to A6, the estimated parameters in (\ref{equ:KRR_sgp}) follow
\[\hat {\bm \theta}_{\lambda,\lambda_z, n} = \bm \theta_{L_2}+O_p(n^{-\frac{m}{2m+p}}),\]
by choosing $\lambda=O(n^{-2m /(2m+p)})$ and $\lambda_z=  O({\lambda}^{-1/2})$. 
\label{thm:L_2_convergence}
\end{theorem}

Note that  $\lambda=O(n^{-2m/(2m+p)})$ and $\lambda_z=O({\lambda}^{-1/2})$ also guarantee the predictive mean estimator in the S-GaSP calibration converges to the reality at the rate $O(n^{{-m}/(2m+p)})$ in terms of the $L_2$ loss.  The convergence rate of the calibration parameter is slightly slower than  $O(n^{-1/2})$ obtained in the two-step approach \citep{tuo2015efficient}. Though the $O(n^{-1/2})$ rate may be obtained by choosing $\lambda=O(n^{-2m/(2m+p)})$ and $\lambda_z=O({\lambda}^{-1/2})$, we should be aware that, however, the $L_2$ minimizer is not the true calibration parameter, but the one that minimizes the $L_2$ distance between the calibrated mathematical model and reality. At the finite sample scenarios, the residuals between reality and calibrated mathematical model with the $L_2$ minimizer may behave like noises, rather than a smooth function, which may be hard to be accurately estimated by a nonparametric model of the discrepancy function. In comparison, the joint estimate of the discrepancy function and calibration parameters was found have a smaller predictive error in Example \ref{eg:compare_rates_all}, \ref{eg:sin_2d} and \ref{eg:box}. 

On the contrary, the calibrated parameters of the GaSP calibration typically do not converge to the $L_2$ minimizer. Let $ \frac{\partial f^M(\cdot, \bm {\hat \theta})}{\partial\theta_j}:=\frac{\partial f^M(\cdot, \bm {\theta})}{\partial\theta_j} \rvert_{\bm \theta=\hat {\bm \theta}}$. A key difference between the GaSP and the S-GaSP calibration is stated in the following Corollary \ref{corollary:diff_GP_SGP_calibration}, which is an immediate consequence from the proof of Theorem \ref{thm:L_2_convergence}. 



\begin{corollary}
Under assumptions A1 to A6,  the estimator for the calibration parameters in the S-GaSP calibration in (\ref{equ:KRR_sgp}) satisfies 
\begin{align*}
\frac{1}{\lambda}_z  \bigg \langle \hat \delta_{\lambda, \lambda_z, n}(\cdot),\frac{\partial f^M(\cdot,\hat {\bm \theta}_{\lambda, \lambda_z, n })}{\partial \theta_j}   \bigg \rangle_{\mathcal H} + \bigg \langle \hat \delta_{\lambda, \lambda_z, n}(\cdot), \frac{\partial f^M(\cdot,\hat {\bm \theta}_{\lambda, \lambda_z, n })}{\partial \theta_j}  \bigg \rangle_{L_2(\mathcal{X})}=0;
\end{align*}
Further assuming the mathematical model is differentiable at  $\hat {\bm \theta}_{\lambda, n}$ in (\ref{equ:KRR_gp}), the estimator of the calibration parameters in the GaSP calibration satisfies
\[\bigg\langle \hat \delta_{\lambda, n}(\cdot),\frac{\partial f^M(\cdot, \hat {\bm \theta}_{\lambda,  n} )}{\partial \theta_j}  \bigg\rangle_{\mathcal H}=0,\]
for any $\theta_j$, $j=1,...,q$.
\label{corollary:diff_GP_SGP_calibration}
\end{corollary}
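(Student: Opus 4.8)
The plan is to obtain both identities as first-order stationarity conditions of the penalized kernel ridge regression objectives in Lemma~\ref{lemma:gp_est} and Lemma~\ref{lemma:sgp_est}, and to observe that the stationarity condition in $\bm\theta$ and the stationarity condition in $\delta$ share the same empirical residual sum. I carry out the S-GaSP case in detail; the GaSP case is the same argument with the penalty $\lambda\|\delta\|_{\calH_z}^2$ replaced by $\lambda\|\delta\|_{\calH}^2$. Throughout write $\hat{\bm\theta}=\hat{\bm\theta}_{\lambda,\lambda_z,n}$, $\hat\delta=\hat\delta_{\lambda,\lambda_z,n}$, and $\hat r_i := y^F(\mathbf x_i)-f^M(\mathbf x_i,\hat{\bm\theta})-\hat\delta(\mathbf x_i)$.

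First I would record the stationarity condition in $\bm\theta$. Because $(\hat{\bm\theta},\hat\delta)$ minimizes $\ell_{\lambda,\lambda_z,n}$ over $\bm\Theta\times\calH$ and $\hat{\bm\theta}$ is interior to $\bm\Theta$ by Assumption~A1 (with $f^M$ differentiable in $\bm\theta$ as ensured by A2--A3), setting $\partial\ell_{\lambda,\lambda_z,n}/\partial\theta_j=0$ gives, since the penalty does not depend on $\bm\theta$,
\[
\frac{1}{n}\sum_{i=1}^n \hat r_i\,\frac{\partial f^M(\mathbf x_i,\hat{\bm\theta})}{\partial\theta_j}=0.
\]
Next I would record the stationarity condition in $\delta$: perturbing $\hat\delta\mapsto\hat\delta+t h$ for an arbitrary $h\in\calH$, differentiating $\ell_{\lambda,\lambda_z,n}(\hat{\bm\theta},\hat\delta+th)$ at $t=0$, and using $\|\delta\|_{\calH_z}^2=\|\delta\|_{\calH}^2+\lambda_z\|\delta\|_{L_2(\calX)}^2$, yields
\[
\lambda\,\langle\hat\delta,h\rangle_{\calH_z}=\frac{1}{n}\sum_{i=1}^n \hat r_i\,h(\mathbf x_i),\qquad\text{for all }h\in\calH.
\]

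The key step is to substitute $h=\partial f^M(\cdot,\hat{\bm\theta})/\partial\theta_j$, which is a legitimate element of $\calH$ by Assumption~A3. With this choice the right-hand side above is exactly the left-hand side of the $\bm\theta$-stationarity relation, hence vanishes, so $\lambda\,\langle\hat\delta,\partial f^M(\cdot,\hat{\bm\theta})/\partial\theta_j\rangle_{\calH_z}=0$; since $\lambda>0$ we get $\langle\hat\delta,\partial f^M(\cdot,\hat{\bm\theta})/\partial\theta_j\rangle_{\calH_z}=0$. Expanding the $\calH_z$ inner product by Lemma~\ref{lemma:RKHS_comparison} and dividing by $\lambda_z$ delivers the stated S-GaSP identity. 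For the GaSP identity I would run the identical argument with the penalty $\lambda\|\delta\|_{\calH}^2$: the $\delta$-stationarity becomes $\lambda\langle\hat\delta_{\lambda,n},h\rangle_{\calH}=\frac1n\sum_i \hat r_i h(\mathbf x_i)$, and the same substitution together with the $\bm\theta$-stationarity relation forces $\langle\hat\delta_{\lambda,n},\partial f^M(\cdot,\hat{\bm\theta}_{\lambda,n})/\partial\theta_j\rangle_{\calH}=0$; here the extra hypothesis that $f^M$ is differentiable at $\hat{\bm\theta}_{\lambda,n}$ is precisely what licenses the $\bm\theta$-stationarity step. I expect the only point requiring care to be the directional differentiation in $\delta$: one must justify passing the derivative through the finite sum and the squared $\calH_z$-norm, which is immediate from the reproducing property $h(\mathbf x_i)=\langle h,K(\mathbf x_i,\cdot)\rangle_{\calH}$, and confirm the test direction stays admissible, which is exactly Assumption~A3; the remainder is a routine matching of the two stationarity conditions.
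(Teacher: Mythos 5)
Your proof is correct and follows essentially the same route as the paper: the paper derives the S-GaSP identity inside the proof of Theorem~\ref{thm:L_2_convergence} by taking the Fr\'echet derivative of $\ell_{\lambda,\lambda_z,n}$ in $\delta$, the partial derivative in $\theta_j$, testing against $g=\partial f^M(\cdot,\hat{\bm\theta}_z)/\partial\theta_j$, and cancelling the shared empirical residual sum, exactly as you do, with the GaSP case obtained by dropping the $L_2$ penalty. The only cosmetic difference is that the paper writes the argument with $\lambda_z=\lambda^{-1/2}$ (so the coefficient appears as $\sqrt{\lambda}$) while you keep $\lambda_z$ general, which is in fact the cleaner way to state the corollary.
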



To ensure the convergence of an estimator $\bm {\hat \theta}$ to the $L_2$ minimizer, one typical requirement is that $\langle \hat \delta_{L_2}(\cdot), \frac{\partial f^M(\cdot, \hat {\bm \theta})}{\partial \theta_j} \rangle_{L_2(\mathcal{X})}=o_p(1)$. It is easy to see that the S-GaSP satisfies this condition with $1/\lambda_z=o(1)$. However, because of the difference between the RKHS norm and $L_2$ norm, the estimated parameters $\hat {\bm \theta}_{\lambda, n}$ in the GaSP calibration model can be far away from the $L_2$ minimizer. As a result, the calibrated mathematical model may not fit the data in the GaSP calibration model, as found in  previous studies \citep{tuo2015efficient,wong2017frequentist}.  For Example~\ref{eg:eg1}, the estimated parameters in the discretized S-GaSP calibration  converges to the $L_2$ minimizer when the sample size increases, whereas the parameters in GaSP calibration with an unscaled kernel function do not converge, as shown in the right panel of Figure~\ref{fig:fixed_lambda_GP_SGP}. 

\section{Discretized scaled Gaussian stochastic process}
\label{sec:discretized_sgasp}

We address the computational issue in the S-GaSP calibration in this section. Instead of truncating the kernel function in (\ref{equ:Mercer_sgp})  by the first several terms, we  select $N_C$ distinct points  to discretize the input space $[0, 1]^p$ and replace ${ {\int}_{\bm{\xi}\in\mathcal X} }  \delta( \bm \xi )^2 d\bm \xi$ by $(1/N_C)\sum^{N_C}_{i=1} \delta( \mathbf x^C_{i} )^2 $ in the S-GaSP model in (\ref{equ:scaled_GP}).

Here we let the discretization points be the observed inputs, i.e. $\mathbf x^C_{i}=\mathbf x_i $ for $i=1,...,N_C$ and $N_C=n$. The discretized S-GaSP is to replace $\delta_z$ in Equation (\ref{equ:scaled_GP}) by
\begin{align}
&\delta_{z_d}(\mathbf x) = \left\{ \delta(\mathbf x) \mid \frac{1}{n}{\sum^{n}_{i=1} \delta( \mathbf x_{i} )^2} =Z_d\right\} 
\label{equ:delta_z_d}
\end{align}
with density $p_{Z_d}(\cdot)$ defined in (\ref{equ:p_z}).
 After marginalizing out $Z_d$, it follows from  Lemma 2.4 in \cite{gu2018sgasp} that $\delta_{z_d}(\cdot)$  is a zero-mean GaSP with the covariance function  
\begin{equation}
\sigma^2 K_{z_d}(\mathbf x_a, \mathbf x_b)=\sigma^2 (K(\mathbf x_a,\mathbf x_b ) - \mathbf r^T(\mathbf x_a) \tilde {\mathbf R}^{-1} \mathbf r(\mathbf x_b))
\label{equ:sigma_2_K_z_d}
\end{equation}
for any $\mathbf x_a, \mathbf x_b \in \mathcal X$, where $  \tilde{\mathbf R}:=\mathbf R+n\mathbf I_n / \lambda_z $.

Recall $\lambda=\sigma^2_0/(n\sigma^2)$.  We have the following predictive distribution of the discretized S-GaSP calibration model. 


\begin{theorem}
Assume $\delta_{z_d}(\cdot)$ in (\ref{equ:delta_z_d}) with $p_{Z_d}(\cdot)$ and $g_{Z_d}(\cdot)$ defined in (\ref{equ:p_z}) and (\ref{equ:g_z}), respectively. The predictive distribution of the field data at any $\mathbf x \in \mathcal X$ by the discretized S-GaSP calibration model in (\ref{equ:delta_z_d}) is a multivariate normal distribution	
\[ y^F(\mathbf x) \mid \mathbf y^F, \bm \theta, \sigma^2_0, \lambda, \lambda_z \sim \mbox{MN}( \hat \mu_{z_d}(\mathbf x), \sigma^2_0 ( (n\lambda)^{-1} K^*_{z_d}(\mathbf x, \mathbf x)+1) ), \]
where $\hat \mu_{z_d}(\mathbf x)=f^M(\mathbf x, \bm \theta)+\frac{\mathbf r^T(\mathbf x)}{1+\lambda \lambda_z}   \left(\mathbf R+\frac{n\lambda}{1+\lambda \lambda_z} \mathbf I_n \right)^{-1}\left(\mathbf y^F- \mathbf f^M_{\bm \theta} \right), $ and 
$K^*_{z_d}(\mathbf x, \mathbf x)=K(\mathbf x, \mathbf x)-\mathbf r^T(\mathbf x) \bigg[ \bigg. \mathbf I_n +\left( \mathbf R+\frac{n\lambda}{1+\lambda \lambda_z} \mathbf I_n \right)^{-1} \frac{n}{(1+\lambda \lambda_z)\lambda_z} \bigg. \bigg]   {\mathbf {\tilde R}^{-1} } \mathbf r(\mathbf x)$,	
with $\mathbf r(\mathbf x)=(K(\mathbf x, \mathbf x_1),...,K(\mathbf x, \mathbf x_n))^T$ and $\tilde{\mathbf R}=\mathbf R+\frac{n}{\lambda_z} \mathbf I_n$ with the $(i, \, j)$ entry of $\mathbf R$ being $K(\mathbf x_i, \mathbf x_j)$. 
\label{thm:pred_SGP}
\end{theorem}

Theorem~\ref{thm:pred_SGP} indicates that the predictive mean of the discretized S-GaSP calibration model shrinks the predictive mean towards the mean function. When $\lambda_z=0$, the shrinkage is zero and the discretized S-GaSP becomes the GaSP. 


Interestingly, when the observations contain no noise, the predictive mean and variance of the field data from the GaSP calibration model and the discretized S-GaSP calibration model are \textit{exactly} the same, stated in the following Lemma~\ref{lemma:equivalence_mean_GP}. 
 
\begin{lemma}
\label{lemma:equivalence_mean_GP}
Assume the conditions in Theorem~\ref{thm:pred_SGP} hold. If $\sigma^2_0=0$, the predictive distribution of the field data at any $\mathbf x \in \mathcal X$ by the discretized S-GaSP model in (\ref{equ:delta_z_d}) is a multivariate normal distribution with the predictive mean and variance as follows
\begin{align*}
\E[y^F(\mathbf x) \mid \mathbf y^F, \bm \theta, \lambda, \lambda_z]&= f^M(\mathbf x, \bm \theta)+ \mathbf r^T(\mathbf x) \mathbf R^{-1} (\mathbf y^F- \mathbf f^M_{\bm \theta}), \\
\V[y^F(\mathbf x) \mid \mathbf y^F, \bm \theta, \lambda, \lambda_z]&=\sigma^2\left(K(\mathbf x, \mathbf x)-\mathbf r^T(\mathbf x) \mathbf R^{-1} \mathbf r(\mathbf x)\right),
\end{align*}
where $\mathbf r(\mathbf x)=(K(\mathbf x, \mathbf x_1),...,K(\mathbf x, \mathbf x_n))^T$ and   the $(i, j)$ entry of $\mathbf R$ is $K(\mathbf x_i, \mathbf x_j)$. 
\end{lemma}
 
\subsection{Emulating slow computer models and parameter estimation}
\label{subsec:parameter_est}

We discuss the computational issue and the parameter estimation in this section. All the approaches are implemented in the new {\tt RobustCalibration} {\sf R} package available on CRAN \cite{gu2018robustcalibrationpackage}. First, some mathematical models are  the numerical solutions of partial differential equations implemented as computer code,which is computationally expensive to run. In these cases,  one often uses a statistical emulator to approximate the computer model based on a set of model runs \citep{sacks1989design,santner2003design}. The GaSP emulator  from the {\tt RobustGaSP} {\sf R} package  is used to emulate the computer model when it is expensive to run.


We next discuss estimating the regularization parameters and kernel parameters, which were  held fixed in some studies. In practice, estimating these parameters can improve predictive performance. 
For any $\mathbf x_a=(x_{a1},....,x_{ap})^T$ and $\mathbf x_b=(x_{b1},....,x_{bp})^T$, the kernel is often assumed to have a product form in  model calibration \citep{kennedy2001bayesian}:
\begin{equation}
K(\mathbf x_a, \mathbf x_b)= \prod^p_{i=1}K_i(d_i),
\label{equ:prod_cor}
\end{equation}
where $d_i=|x_{ai}-x_{bi} |$ for $i=1,...,p$, and $K_i(\cdot)$ is a one dimensional kernel function. One widely used kernel function is the Mat{\'e}rn  kernel \citep{handcock1993bayesian}.
%
 When the roughness parameter is a half-integer,  the Mat{\'e}rn kernel has an explicit expression. For example, the Mat{\'e}rn kernel with roughness parameter being $5/2$ has the following expression
 \begin{equation}
K_i(d_i)=\left(1+\frac{\sqrt{5}d_i}{\gamma_i}+\frac{5d_i^2}{3\gamma_i^2}\right)\exp\left(-\frac{\sqrt{5}d_i}{\gamma_i}\right) \,,
\label{equ:matern_5_2}
\end{equation}
for $i=1,...,p$. A good feature of the Mat{\'e}rn kernel is the sample path of a GaSP is $\left\lfloor{\nu_{i}-1}\right\rfloor$ times differentiable, where $\nu_i$ is the roughness parameters. 
 



Denote $\bm \gamma=(\gamma_1,...,\gamma_p)^T$ the unknown range parameters in the covariance. The parameters in the discretized S-GaSP calibration model are the calibration parameters $\bm \theta$, the variance parameter of the noise $\sigma^2_0$, regularization parameter $\lambda=\sigma^2_0/(n\sigma^2)$, scaling parameter $\lambda_z$ and range parameters $\bm \gamma$. 
Simple algebra shows $\hat \sigma^2_{0, MLE}=\lambda S^2_{z_d}$, where $S^2_{z_d}=(\mathbf y^F-\mathbf f^M_{\bm \theta} )^T \mathbf {\tilde R}_{z_d}^{-1}(\mathbf y^F-\mathbf f^M_{\bm \theta} )$ with  $\mathbf {\tilde R}_{z_d}= \left( \mathbf R^{-1} + \lambda_z\mathbf I_n/n \right)^{-1} +\lambda n \mathbf I_n$ and $\mathbf {\tilde R}^{-1}_{z_d}=\lambda_z/(ng)+(\mathbf R+n\lambda\mathbf I_n/g)^{-1}/g^2$ with $g=\lambda\lambda_z+1$.   Marginalizing out $\delta_z(\cdot)$ and plugging $\hat \sigma^2_{0,MLE}$ into the likelihood of the discretized S-GaSP calibration model in (\ref{equ:delta_z_d}), one has the profile likelihood  
\begin{equation}
\ell_{z_d}(\bm \theta, \bm \gamma, \lambda,\lambda_z ) \propto -\frac{1}{2} \log| \mathbf {\tilde R}_{z_d} | -\frac{n}{2} \log(S^2_{z_d}). 
\label{equ:profile}
\end{equation} 
One may numerically maximize the profile likelihood in (\ref{equ:profile}) to estimate the parameters. Note $\lambda_z$ reflects one's tolerance of how good a mathematical model should predict the reality without the discrepancy function and thus this parameter may be chosen based on the expert knowledge. Because of the conditions discussed in Theorem~\ref{thm:L_2_convergence}, $\lambda_z$ may  be fixed to be proportional to $\lambda^{-1/2}$ or be related to the sample size. For all numerical examples, we fix $\lambda_z=( \lambda ||\tilde {\bm \gamma} || )^{-1/2} $, where $\lambda=\sigma^2_0/(\sigma^2 n)$ and $\tilde {\bm \gamma}=(\tilde { \gamma}_1,...,\tilde { \gamma}_{p})^T$, with $\tilde { \gamma}_i$  being the normalized range parameter (normalized by the length of each coordinate of the input variable). The inclusion of the range parameters is due to the confounding issue between the range parameter and the variance parameter of the process, whereas the ratio of these parameters can typically be estimated consistently from the data \citep{zhang2004inconsistent}.



 In the {\tt RobustCalibration} package, we implement the Bayesian method of estimating the parameters and making predictions. The  prior is assumed to follow $\pi(\bm \theta, \bm \gamma, \eta, \sigma^2)\propto \pi(\bm \theta) \pi(\bm \gamma, \eta)/\sigma^2$ with $\eta=\sigma^2_0/\sigma^2$ being the nugget parameter. Here $\pi(\bm \gamma, \eta)$ is chosen as the joint robust prior for the kernel parameters \citep{gu2018jointly,Gu2018robustness}, and $ \pi(\bm \theta) $ may be specified by the user to reflect experts' knowledge. We assume a uniform distribution of $ \pi(\bm \theta) $ in the numerical examples for demonstration purposes. Compared to the MLE approach, the uncertainty of the parameters can be assessed naturally through the posterior samples.

\section{Comparison between different calibration approaches}
\label{sec:comparison}
We compare a few calibration approaches in this section. First, one of most popular framework is the GaSP calibration approach  \citep{kennedy2001bayesian}, which models the discrepancy in (\ref{equ:gp_calibration}) as a GaSP. The mathematical model and discrepancy function are jointly estimated under the Bayesian framework. The S-GaSP approach is an extended version of GaSP calibration by placing more prior probability mass of the $L_2$ norm of the discrepancy near zero.  Consequently,  the calibrated mathematical models in the S-GaSP calibration fit the data better than the ones in the GaSP calibration.

The S-GaSP calibration approach was inspired by a few pioneering approaches seeking to find the $L_2$ minimizer of the calibration parameters \citep{tuo2015efficient,wong2017frequentist,plumlee2017bayesian}. The orthogonal Gaussian process proposed in \cite{plumlee2017bayesian} constrains the derivatives of the random $L_2$ norm of the discrepancy to be zeros, equivalently giving more prior probability mass of calibration parameters at the stationary points of the calibration parameters in terms of the $L_2$ loss. The S-GaSP model explores another transformation that avoids putting more prior probability mass at local maxima of $L_2$ loss of the discrepancy, and that has a closed-form likelihood function. The $L_2$ calibration was proposed  in \citep{tuo2015efficient}, where the reality is first estimated by nonparametric regression, and the calibration parameters are estimated by minimizing the $L_2$ loss between the reality and mathematical model. The LS calibration is proposed in \citep{wong2017frequentist}, where the calibration parameters are estimated by first minimizing the squared loss between the mathematical model and observations, and a nonparametric approach is applied to estimate the difference between the reality and mathematical model. 
 
 We use the following example to illustrate that jointly estimating  the discrepancy function and calibration parameters can be helpful for predicting the reality. 



\begin{example}
Let $y^F( x)=y^R(x)+\epsilon$, where $\epsilon$ independently follows $N(0,0.05^2)$ and $y^R(x)=g_1+g_2$, with $g_1=\sum^{\infty}_{j=1}j^{-1}\cos(5\pi (j-0.5)x)\sin(5j)$ and $g_2=\sum^{\infty}_{j=1}j^{-6}\cos(5\pi (j-0.5)x)\sin(5j)$. Let mathematical model be $f^M(x)=g_1 \theta$.  The goal is to predict $y^R(\cdot)$ and estimate $\theta$. 
\label{eg:compare_rates_all}
\end{example}

We compare the GaSP, S-GaSP, $L_2$ and LS calibration approaches using Example \ref{eg:compare_rates_all}. The GaSP model is used as the nonparametric regression approach in  the first step of $L_2$ calibration and the second step of the LS calibration. We assume kernel function $K(\cdot,\cdot)$ follows Mat{\'e}rn kernel function in (\ref{equ:matern_5_2}) for all methods. The calibration parameter, variance and  kernel parameters are estimated by the maximum likelihood estimator.

\begin{figure}
\centering
\begin{tabular}{ccc}
{\vspace{-.2in} \includegraphics[height=.3\textwidth,width=.34\textwidth ]{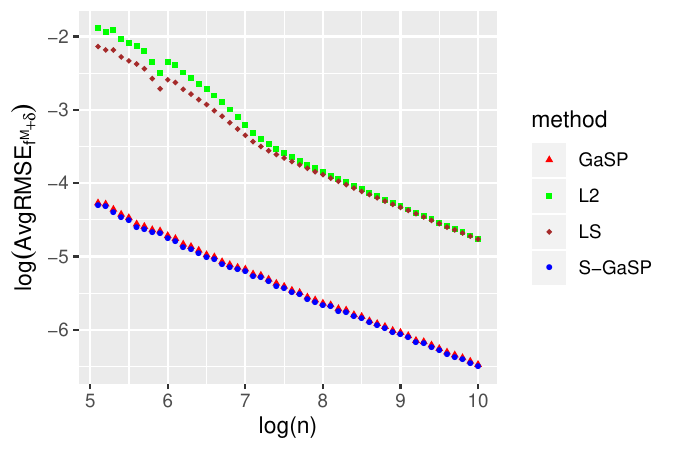}}
\includegraphics[height=.3\textwidth,width=.34\textwidth]{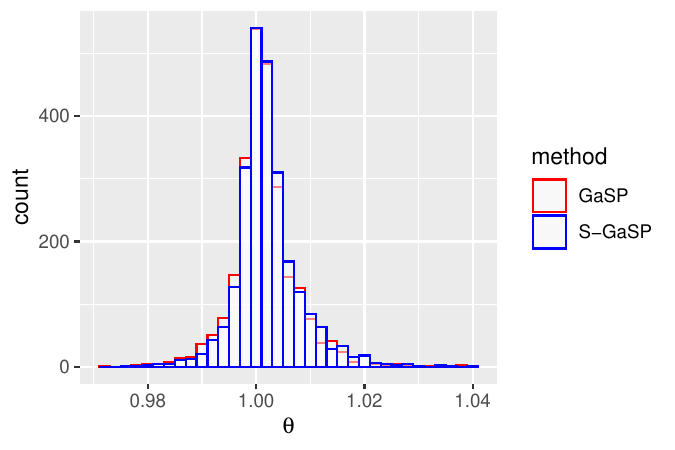} 
\includegraphics[height=.3\textwidth,width=.32\textwidth]{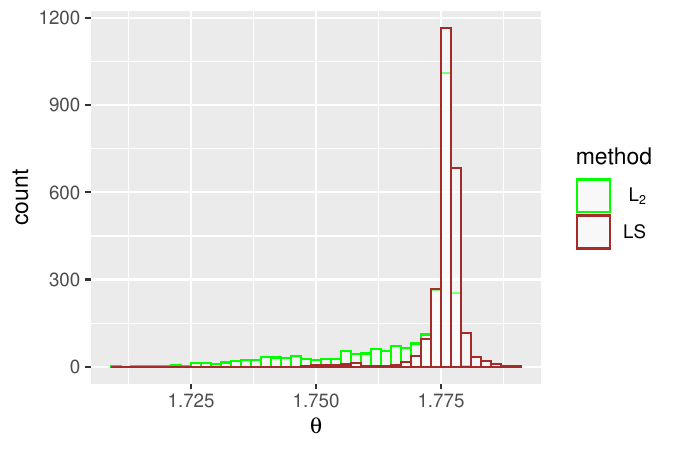} 
\end{tabular}
\caption{Comparison of different approaches in Example \ref{eg:compare_rates_all}.  In  the left panel, the logarithm of the $\mbox{AvgRMSE}_{f^M+\delta}$ of four calibration approaches are graphed at the logarithm of different sample sizes. The histogram of the estimated calibration parameter of each experiment  of different approaches are given in the middle panel and the right panel. }
\label{fig:compare_rate_all}
\end{figure}

In the left panel of Figure \ref{fig:compare_rate_all}, we found the predictive error by the GaSP and S-GaSP calibration is considerably smaller than the LS calibration and $L_2$ calibration approach. This is because the reality contains $g_1=\sum^{\infty}_{j=1}j^{-1}\cos(5\pi (j-0.5)x)\sin(5j)$, which is hard to be predicted by a nonparametric regression approach alone. The mathematical model specified herein, however, can explain this term with calibration parameter close to 1. The estimated calibrated parameter in both GaSP and S-GaSP is indeed close 1 in all experiments, which leads to better predictive performance.

The $L_2$ minimizer of the calibration parameters in this example is around 1.775. Note that the estimated calibration parameter S-GaSP calibration may not converge to the $L_2$ minimizer when sample size increases. This is because  $||y^R(\cdot)-f^M(\cdot,  \theta)||_{\mathcal H}$ is unbounded when $\theta \neq 1$, which violates the assumption A5. The calibrated computer computer with calibration parameter being around 1 improves the  predictive accuracy and interpretation, as the residual discrepancy term is a smooth term that is easy to be predicted. 

The GaSP and S-GaSP calibration approaches are always more accurate in predicting the reality than the two-step approaches. Indeed, for Example \ref{eg:ion_channel} discussed in the numerical comparison, the predictions in the $L_2$ calibration approach is the best among all methods, as the observations can be easily predicted through a nonparametric regression without the mathematical model. When the reality is complicated, the joint estimation by GaSP and S-GaSP calibration approaches seems to have a smaller predictive error, which will be illustrated by a few more numerical examples.

 As the sampling model of the observations is well-defined in both GaSP calibration and S-GaSP calibration, a Bayesian approach can be implemented and the uncertainty of the parameters can be assessed naturally through their posterior distributions. For the $L_2$ calibration, the asymptotic distribution of the estimator of the calibration parameter may be used to approximately quantify the uncertainty in parameter estimation \citep{tuo2015efficient}. A bootstrap approach is developed to assess the uncertainty of parameters for the LS calibration approach \citep{wong2017frequentist}. 








For a fast mathematical model, The computational complexity of all methods are typically dominated by nonparametric estimation of the discrepancy function, which is $O(n^3)$ in general with $n$ being the number of observations. Though the computation complexity is similar, the two-step approaches such as $L_2$ calibration and LS calibration are typically faster than the joint estimation in GaSP and S-GaSP calibration approach, as the kernel parameters and calibration parameters are estimated in two different steps. 











\section{Numerical study}
\label{sec:numerical}

In this section, we numerically compare the performance of several methods for calibration and prediction. We consider two loss functions as follows.
\begin{enumerate}[i.]
\item The $L_2$ loss between the reality and its estimator: $L_2(\hat y^R(\cdot) )=\| y^R(\cdot)- \hat y^R(\cdot) \|^2_{L_2}$.
\item The $L_2$ loss between the reality and calibrated mathematical model:  $L_2(\hat {\bm \theta})=\| y^R(\cdot)- \hat f^M(\cdot, \hat{\bm \theta})  \|^2_{L_2}$, where $ \hat{\bm \theta}$ is the estimator of the calibration parameter.
\end{enumerate}

  The first criterion is our primary consideration, because the out of sample prediction for the reality examines how well we can reproduce the reality.
The second criterion describes how well the calibrated mathematical model fits reality in terms of the $L_2$ loss \citep{tuo2015efficient}. The parameters in a mathematical model often have scientific interpretation, whereas a non-linear discrepancy function might not be interpretable. Thus the discrepancy function is not used for prediction in the second criterion. 


To numerically evaluate the performance under two prediction criteria, we first compute $\mbox{AvgRMSE}_{f^M+\delta}$ in Equation (\ref{equ:AvgRMSE_fm_delta}) of different methods. We  compute $\mbox{AvgRMSE}_{f^M}$ through replacing $\hat y^R_i(\mathbf x^*_j)$ in Equation (\ref{equ:AvgRMSE_fm_delta}) by the calibrated computer model output. For assessing the uncertainty in predictions, we also compute the average length of predictive interval and  proportion of the observations covered by the $95\%$ predictive interval   defined below: 
\begin{align*}
{L_{CI}(95\%)} =& \frac{1}{{N{n^{*}}}}\sum\limits_{j = 1}^N \sum\limits_{i = 1}^{{n^{*}}} {\Length\{C{I_{ij}}(95\% )\} } \,, \\
{P_{CI}(95\%)} =& \frac{1}{N{n^{*}}}\sum\limits_{j = 1}^N {\sum\limits_{i = 1}^{n^{*}} 1\{y^R_{j}(\mathbf x^{*}_i)\in C{I_{ij}}(95\% )\}}\,, 
\end{align*}
where $C{I_{ij}}(95\% )$ is the $95\%$ predictive interval; $N$ and $n^*$ are the total number of experiments and the number of held-out test data in each experiment, respectively.  An efficient method should have small  $\mbox{AvgRMSE}_{f^M+\delta}$ and $\mbox{AvgRMSE}_{f^M}$, short predictive interval and ${P_{CI}(95\%)}$ close to the nominal $95\%$. For the real examples, we replace the test reality output by the held-out observations for out-of-sample predictions.


We numerically compare 5 different methods. The first and second methods are the GaSP calibration and S-GaSP calibration, respectively, both implemented in the full Bayesian framework. The scaling parameter of the S-GaSP is fixed to be $\lambda_z=( \lambda ||\tilde {\bm \gamma} || )^{-1/2} $, where $\lambda=\sigma^2_0/(\sigma^2 n)$ and $\tilde {\bm \gamma}=(\tilde { \gamma}_1,...,\tilde { \gamma}_{p})^T$, with $\tilde { \gamma}_i$  being the normalized range parameter (normalized by the length of each coordinate of the input variable). The rest of model parameters and calibration parameters are sampled from  posterior distributions. The third  and fourth approaches are the $L_2$ calibration and LS calibration, respectively. The GaSP regression using the {\tt RobustGaSP} {\sf R} package is used to estimate the reality in the first step of the $L_2$ calibration, and the residual (between the calibrated mathematical model and reality) in the second step of the LS calibration.  The kernel function $K(\cdot,\,\cdot)$ is assumed to follow (\ref{equ:matern_5_2}) in all methods for demonstration purposes.   We also include the method with no discrepancy function computed under the  Bayesian framework. The GaSP,  S-GaSP and no-discrepancy calibration approaches are implemented in the {\tt RobustCalibration} {\sf R} package.
\subsection{Simulated exanple}
\label{subsec:simulated_eg}


\begin{example}
Let $y^F(\mathbf  x)=y^R(\mathbf x)+\epsilon$, where $\mathbf x=(x_1, x_2) \in \mathcal [0,1]^2$, $y^R(\mathbf x)=\sin(0.2\pi x_1)x_2 +\sin(2\pi x_1)x_2+1$ and $\epsilon \overset{i.i.d.}{\sim} N(0,0.1^2)$. The mathematical model is $f^M(\mathbf x, \bm \theta)=\sin(\theta_1 x_1)x_2+\theta_2$ with $\theta_1 \in [0,\, 10]$ and $\theta_2\in \mathbb R$.  
\label{eg:sin_2d}
\end{example}



We first consider a simulated study in Example \ref{eg:sin_2d} and test two configurations, where with sample sizes of the observations are taken to be $n=25$ and $n=50$, respectively.  For each configuration, we test $N=100$ experiments with $n^*=2500$ reality output equally spaced in each interval held-out for testing. The input variable in each experiment is generated from the maximin Latin hypercube design (\cite{santner2003design}).



\begin{table}[t]
\begin{center}
\begin{tabular}{llllll}
  \hline
     n=25                & GaSP   &  S-GaSP &  $L_2$   & LS & No-discrepancy   \\
  \hline
    $\mbox{AvgRMSE}_{f^M+\delta}$                   & {0.0556} &{0.0558}  &0.103  &0.0702  & / \\
    $\mbox{AvgRMSE}_{f^M}$                      &  0.143& {0.131} & {0.131} & {0.131} &{0.130} \\
    $L_{CI}(95\%) $                                          & {0.206} &  0.209 &0.434 &0.389 &0.654 \\
  $P_{CI}(95\%) $                                          & 0.931 &  0.932 &0.921 & {0.958} &0.987 \\
    \hline
     n=50                & GaSP  &  S-GaSP & $L_2$  & LS  &No-discrepancy  \\
  \hline
    $\mbox{AvgRMSE}_{f^M+\delta}$               & ${ 0.0405} $&${0.0403}$&$0.0655$ &${0.0437}$ &$/$ \\
    $\mbox{AvgRMSE}_{f^M}$                & $0.144 $& $0.130$ &${0.128}$ &${ 0.128}$ &${ 0.128}$ \\
      $L_{CI}(95\%) $                                          & 0.153 &  {0.152} &0.439 &0.402 &0.632 \\
  $P_{CI}(95\%) $                                          & {0.938} &  0.935 &0.992 &0.997 &0.991 \\

    \hline

\end{tabular}
\end{center}
\caption{Predictive performance by different methods for Example~\ref{eg:sin_2d}.}
\label{tab:AvgRMSE_eg3}
\end{table}

The predictive errors by different approaches are given in Table \ref{tab:AvgRMSE_eg3}. First  the  $\mbox{AvgRMSE}_{f^M+\delta}$  of all methods are better than  $\mbox{AvgRMSE}_{f^M}$ for almost all methods, indicating that a nonparametric model can improve the predictive performance. Second, the GaSP and S-GaSP calibration approaches performs better than the two-step LS and $L_2$ calibration methods in terms of    $\mbox{AvgRMSE}_{f^M+\delta}$. Though around $95\%$ of the held out test data are covered by the $95\%$ predictive interval in all methods, the average lengths of the predictive interval by the GaSP and S-GaSP calibration are shorter than the two-step approaches.

 For the $L_2$ calibration approach, the  mathematical model is not used in the first step,  as the parameters in the mathematical model are estimated in the second step to minimize the $L_2$ loss.  The high frequency term ($\sin(2\pi x_1)x_2$) makes the predictions by the nonparametric regression less accurate than jointly estimate of the calibration parameters and discrepancy function, as the high frequency term can be explained by the mathematical model. The comparison between predictions by using the nonparameteric regression alone and S-GaSP calibration in the first experiment with $n=60$, for example, is shown in Figure \ref{fig:reality_diff_sin_2d}. Indeed the S-GaSP calibration seems to have a smaller predictive error.


\begin{figure}[t]
	\centering
			\includegraphics[height=.35\textwidth,width=1\textwidth ]{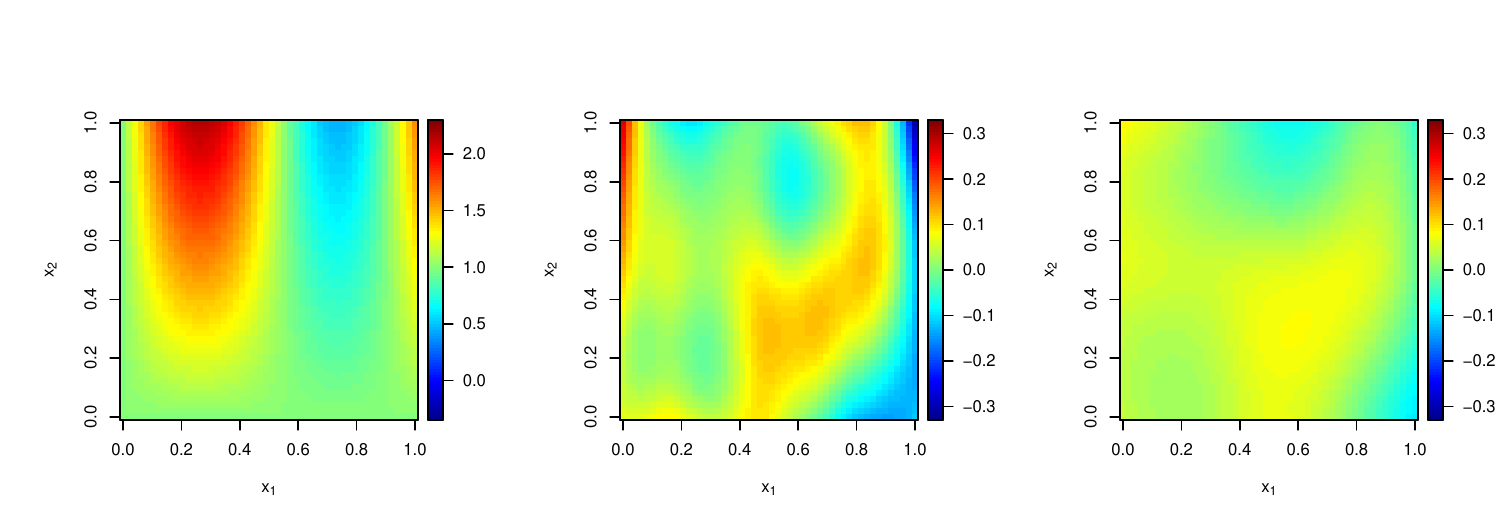}  \vspace{-.3in}
	\caption{The reality in an experiment of Example~\ref{eg:sin_2d} is graphed in the left panel.  The residuals between the reality and prediction by a  GaSP regression (without the mathematical model) are graphed in the middle panel, whereas those by the S-GaSP calibration are graphed in right panel. The middle and right panels have the same scale of color. }
	\label{fig:reality_diff_sin_2d}
\end{figure}

The estimated calibration parameters of Example~\ref{eg:sin_2d} are graphed in  Figure~\ref{fig:sin_2d_est}. In the left panel, the estimation of $\theta_1$ using the LS, $L_2$ and no-discrepancy methods is close to the $L_2$ minimizer (graphed as the solid line), whereas the estimation of $\theta_1$ using the GaSP and the S-GaSP is, in fact, closer to $2\pi$. This is because the model complexity is naturally built into the calibration: an estimated $\theta_1$ that is close to $2\pi$ makes the prediction better, since the high frequency term can be approximately explained by the mathematical model.

\begin{figure}[t]
	\centering
			\includegraphics[height=.4\textwidth,width=1\textwidth ]{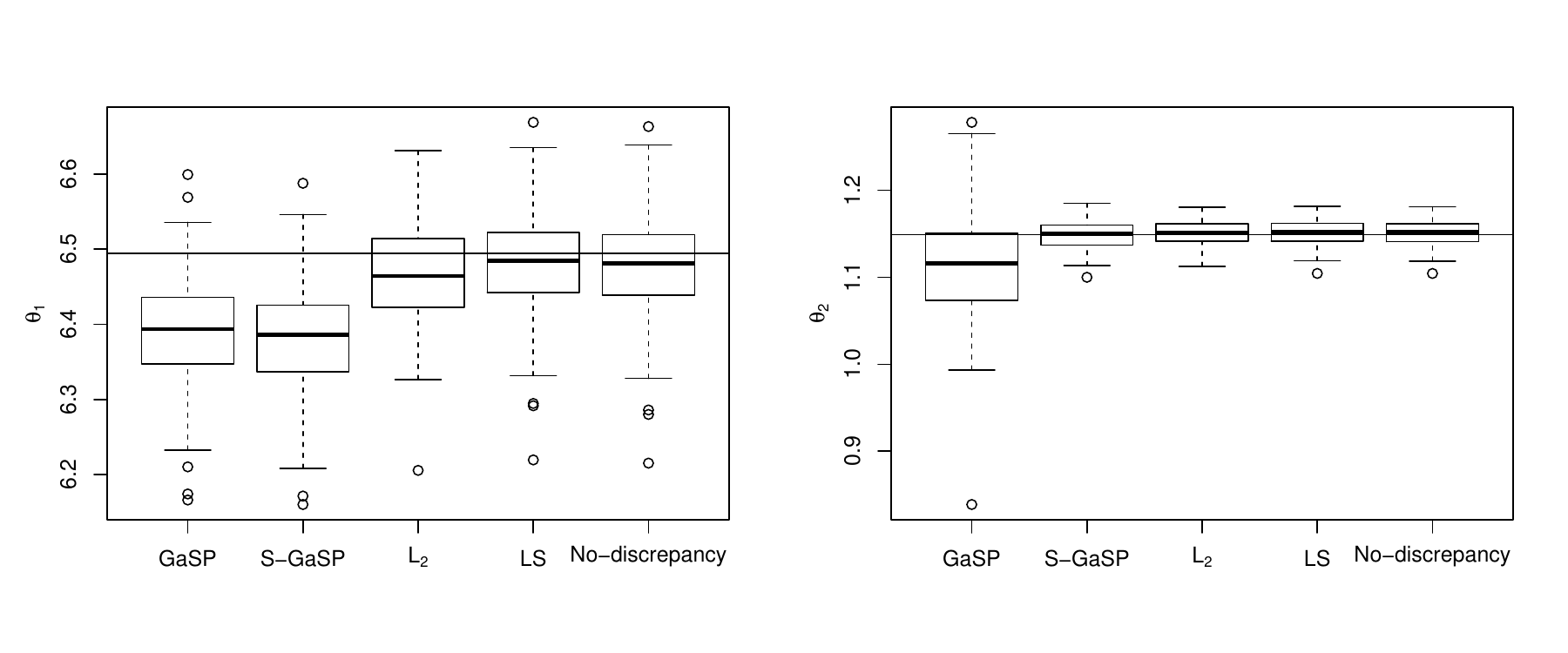}   \vspace{-.3in}
			
	\caption{Boxplots of the estimated calibration parameters from the GaSP, the S-GaSP, $L_2$ and LS calibration approaches for each experiment in Example~\ref{eg:sin_2d}.  The solid lines are the $L_2$ minimizer, which is around $6.48$ and $1.15$ for $\theta_1$ and $\theta_2$, respectively. }
	\label{fig:sin_2d_est}
\end{figure}



The GaSP calibration approach produces the largest $\mbox{AvgRMSE}_{f^M}$ compared to  other calibration methods $\mbox{AvgRMSE}_{f^M}$, as illustrated in Table \ref{tab:AvgRMSE_eg3}.  This is probably caused by  the estimation of $\theta_2$ (a mean parameter) shown in the right panel in Figure \ref{fig:sin_2d_est}. The estimates of $\theta_2$ by the S-GaSP calibration, LS, $L_2$ and no-discrepancy methods are all close to the $L_2$ minimizer, whereas the one in the GaSP calibration seems to be smaller the other ones.

Example \ref{eg:sin_2d} indicates that the calibrated mathematical model that minimizes the $L_2$ loss between the reality and mathematical model might not always be the optimal choice for predictions. In this example, when $\theta_1$ is estimated to be close to $2\pi$, the predictive accuracy can be improved significantly, and the calibrated computer model only produces slightly larger error than the one by the $L_2$ minimizer. The other parameter, $\theta_2$, is a mean parameter. The two types of errors are both small when $\theta_2$ is estimated to be  close to the $L_2$ minimizer.
The S-GaSP calibration model seems to do well in both sides. It predicts the reality as accurately as the GaSP calibration model with the assistance of the calibrated mathematical model. The calibrated mathematical model using the S-GaSP is also closer to the reality than the one using the GaSP calibration. 

\subsection{Chemical system interaction}
\begin{example}
Consider the system interaction between  two chemical substances $y_1$ and $y_2$:
\begin{align*}
\dot{y}_1(t)&=10^{\theta_1-3} y_1(t) \\
\dot{y}_2(t)&=10^{\theta_1-3} y_1(t) - 10^{\theta_2-3}{y}_2(t)
\end{align*}
where $2$ repeated observations of the second chemical substance are measured at each of the 6 time points $t=10,20,40,80,160, 320$ in \cite{box1956application}. The goal is to estimate $\theta_1$ and $\theta_2$, and to predict the values of the chemical substance across time. 
\label{eg:box}
\end{example}

We consider $(\theta_1, \theta_2) \in [0.5, 1.5]^2$. As the number of observations are limited, we first perform a leave-one-out comparison by holding out two repeated observations for prediction at each time point. We replace the reality in each criterion by the held-out observations to test each approach. The predictive performance of the leave-one-out comparison for Example \ref{eg:box} is given in Table \ref{tab:AvgRMSE_eg_box}.

\begin{table}[h]
\begin{center}
\begin{tabular}{llllll}
  \hline
                     & GaSP   &  S-GaSP &  $L_2$   & LS & No-discrepancy   \\
  \hline
    $\mbox{AvgRMSE}_{f^M+\delta}$                   & 4.98 & {5.05}  &{10.2}  &7.80  & / \\
    $\mbox{AvgRMSE}_{f^M}$                      &  11.8 & {10.4}  &{9.34}   & { 10.9} &{9.63} \\
    $L_{CI}(95\%) $                                          & {32.8} &  44.6 &52.5 &42.6 &47.0 \\
  $P_{CI}(95\%) $                                          & $100\%$ &  $100\%$ &$91.7\%$ &$100\%$  &$100\%$  \\
    \hline

\end{tabular}
\end{center}
\caption{Predictive performance by different methods for Example~\ref{eg:box}.}
\label{tab:AvgRMSE_eg_box}
\end{table}

First the predictive error $\mbox{AvgRMSE}_{f^M+\delta}$ is smaller than $\mbox{AvgRMSE}_{f^M}$ in both GaSP and S-GaSP calibration, indicating that jointly estimating the mathematical model and discrepancy function improves the predictive accuracy than the two-step approaches. On the other hand, the GaSP calibration has the largest predictive error using the calibrated mathematical model alone. The $L_2$ calibration and no-discrepancy calibration have a smaller  error $\mbox{AvgRMSE}_{f^M}$ compared to other methods. The S-GaSP calibration has relatively small predictive error under both predictive criteria. 


\begin{figure}[t]
\centering
\begin{tabular}{ccc}
\includegraphics[height=.33\textwidth,width=.333\textwidth ]{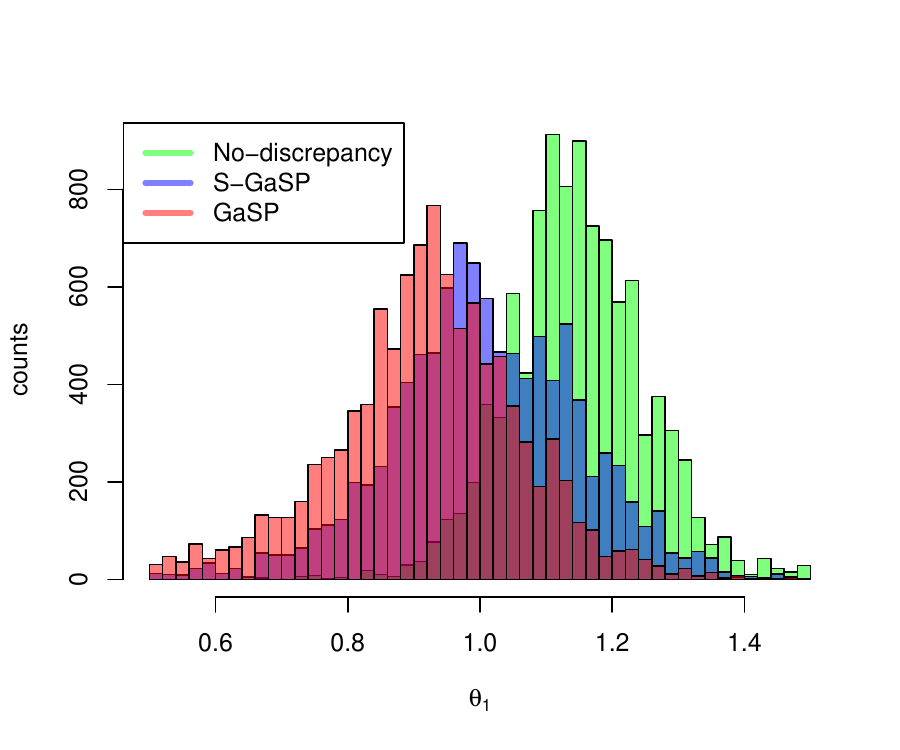}
\includegraphics[height=.33\textwidth,width=.333\textwidth ]{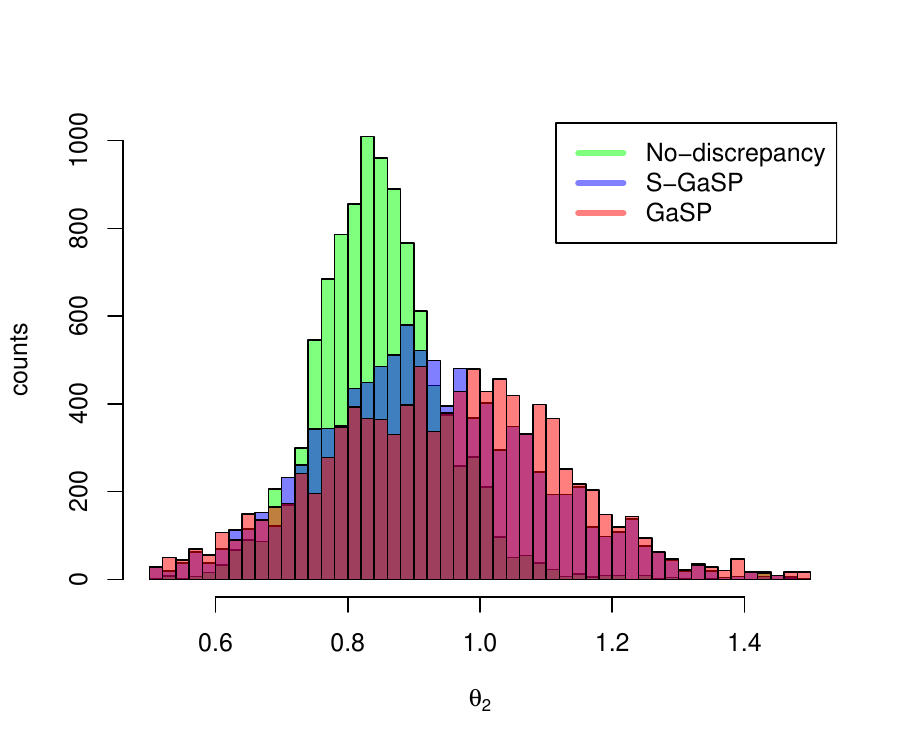}
\includegraphics[height=.33\textwidth,width=.333\textwidth ]{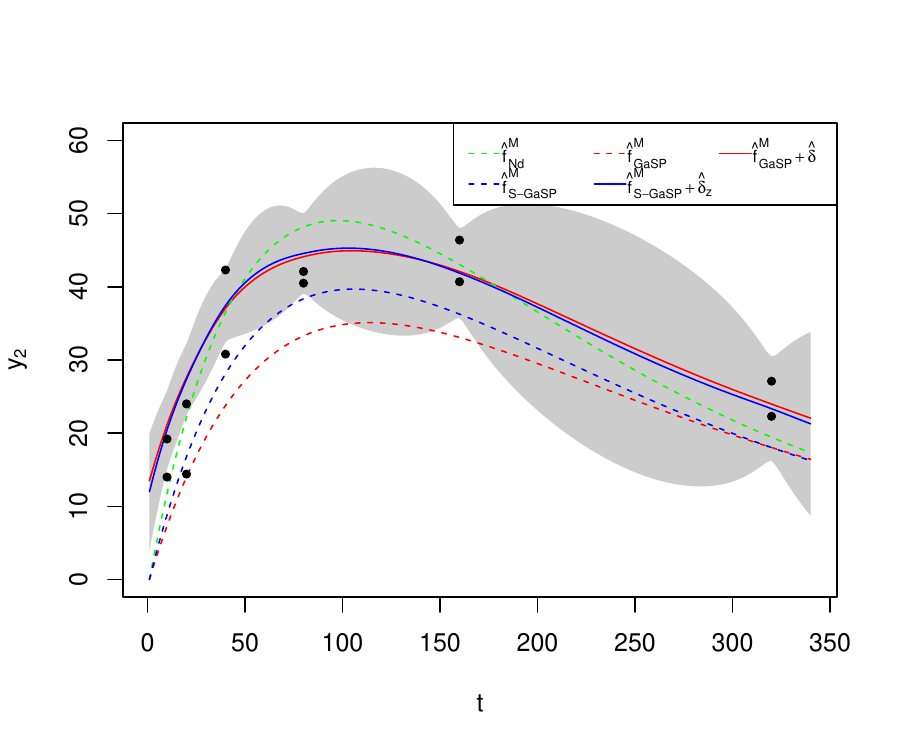}

\end{tabular}
\caption{Posteriors samples of calibration parameters and predictions from the GaSP, S-GaSP and no-discrepancy calibration approaches. The shaded area in the right panel is the predictive interval of the reality from the S-GaSP calibration approach.}
\label{fig:post_samples_pred_box}
\end{figure}

The left panel in Figure \ref{fig:post_samples_pred_box} gives posterior samples of calibration parameters by different methods in Example \ref{eg:box}. When there is no discrepancy function, the posterior samples of the calibration parameters are close to $(1.1, 0.8)$, which is inline with the result in \cite{box1956application}. The estimated calibration parameters of the no-discrepancy method is close to the $L_2$ and LS methods, whereas the posterior samples of the calibration parameters from the GaSP calibration and S-GaSP calibration reflects more uncertainty. 

The predictions in the GaSP, S-GaSP and no-discrepancy calibration  are graphed in right panel of Figure \ref{fig:post_samples_pred_box}. The calibrated mathematical model in the S-GaSP calibration seems to fit the reality better than the one by the GaSP calibration. The calibrated mathematical model in the no-discrepancy calibration fits the observations the best, as the residuals between the mathematical model and observations are assumed to be random noises.  Although we do not know the reality of this  example, the predictive performance of GaSP and S-GaSP calibration is better than the other approaches as shown in the first row in Table \ref{tab:AvgRMSE_eg_box}, suggesting that there might be a systematic discrepancy between the  observations and the mathematical model. 


\subsection{Ion Channel experiments}
In the last example, we consider calibrating the mathematical model for sodium ion channels using real observations from whole cell voltage clamp experiments \citep{plumlee2016calibrating}. 
\begin{example} 
The data sets consists of 19 observations of normalized current needed to maintain the
membrane potential at $-35mV$ over time \citep{plumlee2017bayesian}. Denote the input variable $x$ the natural logarithm of time. The mathematical model   has the following expression:
\[f^M(x, \bm \theta)=\mathbf e^T_1 \exp[\exp(x) \mathbf A(\bm \theta)]\mathbf e_4, \]
where $\bm e_i$ is a column vector with $1$ at the $i$th element and 0 for the rest of components,  the first exp is the matrix exponential,  $\bm \theta=(\theta_1, \theta_2,\theta_3)^T$,  and 
\[\mathbf A(\bm \theta)=\begin{pmatrix}
-\theta_2 -\theta_3 & \theta_1 & 0 & 0 \\
\theta_2 & -\theta_1-\theta_2 & \theta_1 & 0 \\
0 & \theta_2 & -\theta_1-\theta_2 & \theta_1 \\
0 & 0 & \theta_2 & -\theta_1
\end{pmatrix} \]
\label{eg:ion_channel}
The range of calibration parameter considered herein is $\theta_i \in [0,10]$ for $i=1,2,3$. 
\end{example}

 
We first consider a leave-one-out comparison for the ion channel experiment. The predictive errors by different approaches are given in Table \ref{tab:AvgRMSE_eg_ion_channel}. First the average root of mean squared errors under two criteria are all very small. Here since the input variable is only one dimensional and the real observations (graphed in the bottom left panel in Figure \ref{fig:eg_5_ion_channel}) seems to be very smooth, using the GaSP regression by the {\tt RobustGaSP} {\sf R} package  gives very high predictive accuracy  already. Including the mathematical model does not improves the prediction accuracy in this example. Therefore, the predictive error by the $L_2$ calibration is the smallest, as the GaSP regression without the mathematical model is used for predicting the reality in the first step. 

\begin{table}[h]
\begin{center}
\begin{tabular}{llllll}
  \hline
                         & GaSP   &  S-GaSP &  $L_2$   & LS & No-discrepancy   \\
  \hline
    $\mbox{AvgRMSE}_{f^M+\delta}$                   &$1.77 \times 10^{-3}$ & $1.52 \times 10^{-3}$  &$8.26 \times 10^{-4}$  &$2.17 \times 10^{-3}$  & / \\
    $\mbox{AvgRMSE}_{f^M}$                      &  $1.13 \times 10^{-2}$ &  $5.62 \times 10^{-3}$  &$4.83 \times 10^{-3}$   & $6.49 \times 10^{-3}$ &$6.45 \times 10^{-3}$ \\
    $L_{CI}(95\%) $                                          & $5.64 \times 10^{-3}$ &  $9.58 \times 10^{-3}$ &$9.45 \times 10^{-3}$ &$5.80 \times 10^{-3}$ &$2.58 \times 10^{-2}$ \\
  $P_{CI}(95\%) $                                          & $94.7\%$ &  $100\%$ &$84.2\%$ & $78.9\%$ &$84.2\%$ \\
    \hline

\end{tabular}
\end{center}

\caption{Predictive performance by different methods for Example~\ref{eg:ion_channel}.}
\label{tab:AvgRMSE_eg_ion_channel}
\end{table}

In the second place, the GaSP calibration approach has a substantial larger predictive error by using the calibrated computer model alone, shown in the second row  in Table \ref{tab:AvgRMSE_eg_ion_channel}. Besides, the lengths of predictive intervals are all very small in all approaches. The predictive  interval of the GaSP calibration seems to be most faithful for uncertainty assessment, as the proportion of the observations covered in the  interval are closest to the nominal level. 

\begin{figure}[t]
\centering
\begin{tabular}{cc}
\includegraphics[height=.35\textwidth,width=.5\textwidth ]{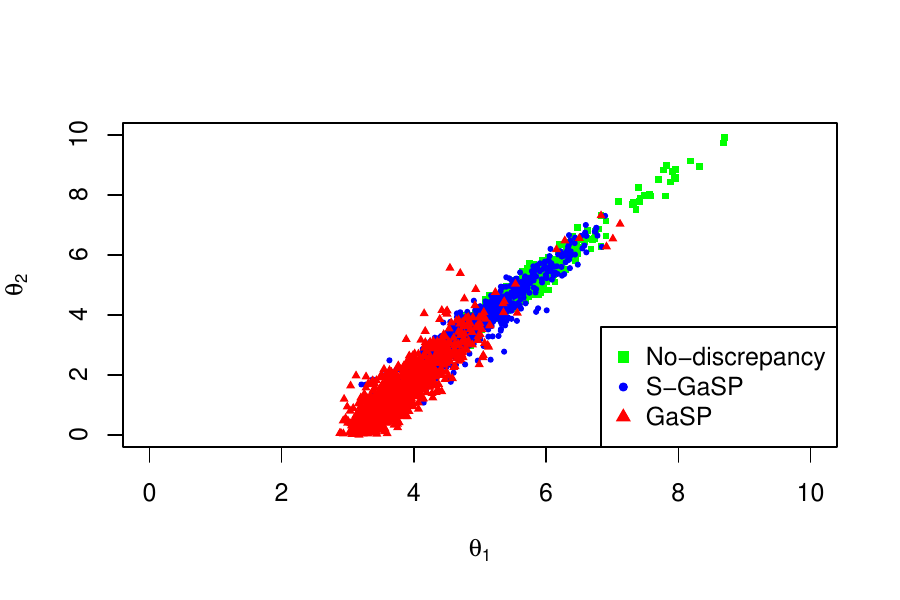}
\includegraphics[height=.35\textwidth,width=.5\textwidth ]{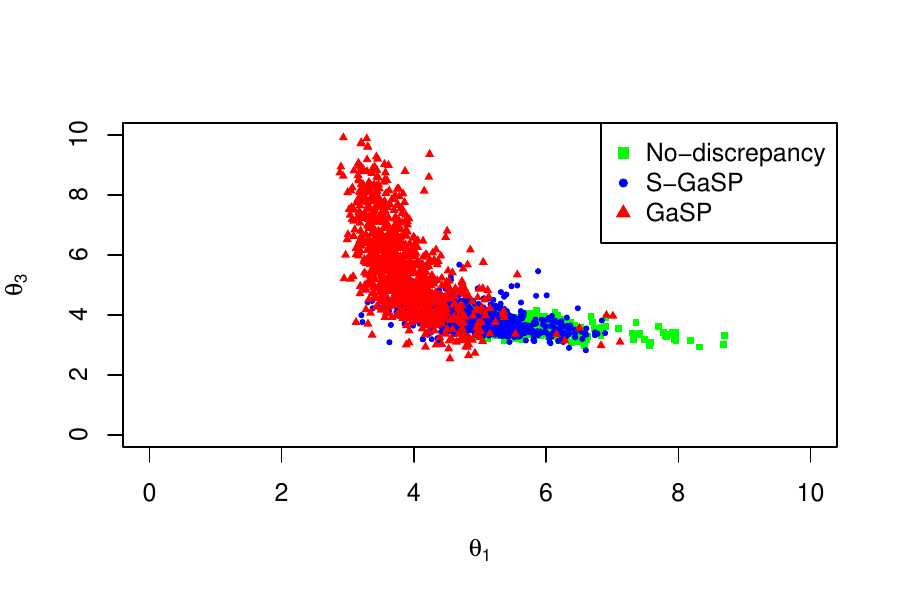} \vspace{-.5in} \\
\includegraphics[height=.35\textwidth,width=.5\textwidth ]{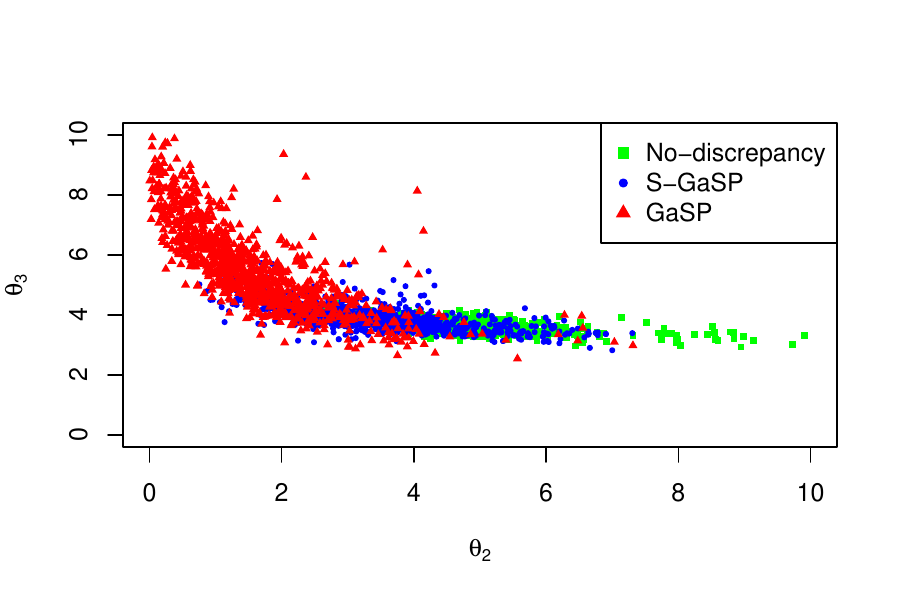}
\includegraphics[height=.35\textwidth,width=.5\textwidth ]{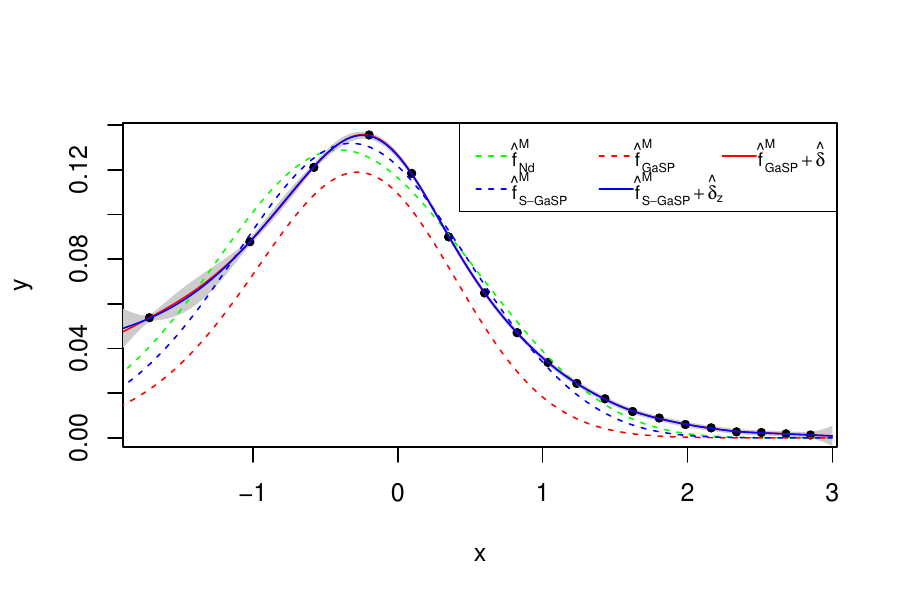} 

\end{tabular}
\caption{The posterior samples and predictions from no-discrepancy calibration, S-GaSP and GaSP calibration.   The shaded area in the bottom right panel is the 95\% predictive credible interval of the mean of the observations. }
\label{fig:eg_5_ion_channel}
\end{figure}

The posterior samples and estimation of the reality by different calibration approaches are graphed in Figure \ref{fig:eg_5_ion_channel}. For better visualization, we thin the posterior samples by 10 and only graph one tenth of the total posterior samples.  In this example, the posterior samples of the S-GaSP are closer to the ones with the no-discrepancy calibration. Due to this reason, the calibrated computer model by the S-GaSP calibration (graphed as the dashed blue curve in the lower right panel) fits the observations well, whereas the calibrated computer model by the GaSP calibration  (graphed as the dashed red curve in the lower right panel) underestimates the values of the observations. This identifiability issue of the GaSP calibration in this example was also reported in \citep{plumlee2017bayesian}, whereas the calibrated mathematical model fits the observations reasonably well in the S-GaSP calibration.


 


 




\section{Concluding remarks}
\label{sec:conclusion}
We have introduced the scaled Gaussian stochastic process (S-GaSP) for the calibration and prediction. We showed that under certain some routinely used assumptions, the predictive mean of the S-GaSP calibration model converges to the reality as fast as the GaSP calibration with some suitable choice of the regularization parameter and scaling parameter. The estimated calibration parameters in the S-GaSP calibration  converges to the $L_2$ minimizer  with the same choice of the regularization parameter and scaling parameter, whereas those in the GaSP calibration typically do not converge to the $L_2$ minimizer. The results rely on the  the orthogonal series representation of the processes studied in this work. The computational complexity of the discretized S-GaSP calibration is the same as the GaSP calibration. Both GaSP, S-GaSP calibration and calibration without a discrepancy function were implemented in the {\tt RobustCalibration} {\sf R} package under the Bayesian framework.

The numerical studies indicate that jointly estimating the calibration parameters and discrepancy function in GaSP and S-GaSP calibration can improve the predictive accuracy, if the reality are too complicated to be predicted precisely by nonparametric regression alone. The mathematical model, which typically contains some information of the trend and shape of the reality function, can be helpful in predictions. We also empirically found that the calibrated mathematical model that minimizes the $L_2$ distance between the reality and mathematical model may not always be the best for reducing the predictive error, as the residuals may contain information that could be hard to be modeled by the nonparametric regression. The S-GaSP calibration give as least as accurate predictions  as the GaSP calibration, and the calibrated mathematical model by the S-GaSP calibration fits the real observations much closer than the one by the  S-GaSP calibration in almost all examples. 




We outline a few extensions of the S-GaSP model below. From the theoretical perspective, we did not study the convergence of the discretized S-GaSP, whereas the numerical studies indicate the convergence rate from the discretized S-GaSP is the same as the S-GaSP. Secondly we illustrated that the S-GaSP calibration can be implemented in both Frequentist and Bayesian ways. The contraction rate of the S-GaSP under the Bayesian framework, however, was not studied. The studies of the contraction rate of the GaSP regression may be extended to achieve this goal \citep{van2009adaptive,bhattacharya2014anisotropic}. Thirdly, we fix the scaling parameter in the S-GaSP calibration as a function of the sample size, whereas historical information may be used to develop a reasonable prior for this parameter \citep{salter2019uncertainty}. Furthermore, the S-GaSP calibration framework may be extended to include a model of correlated noises, as the field observations may contain time series and images  \citep{anderson2017abundant}.





	
\bibliographystyle{apalike}
\bibliography{References_2020}

\pagebreak
	
\beginsupplement
{\center \Large {Supplementary materials for a theoretical framework of the scaled Gaussian stochastic process in prediction and calibration} }

\quad

All the formulas in this supplementary materials are cross-referenced in the main body of the article.  We first give a brief introduction of the Gaussian stochastic process model and reproducing kernel Hilbert space in Section~\ref{sec:GP}. The proof for Section~\ref{sec:SGP} is given in Section~\ref{sec:proof_SGP}. The proof for Theorem~\ref{thm:nonparametric} and two auxiliary lemmas are provided in Section~\ref{sec:proof_nonparametric_SGP}. Section~\ref{proof:theta_convergence} encloses the proof for Theorem~\ref{thm:L_2_convergence} and provides additional results regarding the convergence of S-GaSP calibration when kernel parameters are estimated.

\section{Background: Gaussian stochastic process}
\label{sec:GP}
Assume the mean and trend of the reality are properly modeled in the mathematical model. Consider to model the unknown discrepancy function in the calibration model (\ref{equ:gp_calibration}) via a real-valued zero-mean Gaussian stochastic process $\delta(\cdot)$ on a $p$-dimensional input domain $\mathcal X$,
\begin{equation}
\delta(\cdot)\sim \mbox{GaSP}(0, \sigma^2 K(\cdot, \cdot)),
\label{equ:GP}
\end{equation}
where $\sigma^2$ is a variance parameter and $K(\mathbf x_a, \mathbf x_b)$ is the correlation for any $\mathbf x_a, \mathbf x_b \in \mathcal X$, parameterized by a kernel function. For simplicity, we assume $\mathcal X=[0,\,1]^{p}$ in this work.
 
For any $\{\mathbf x_1,...,\mathbf x_n \}$, the outputs $\left(\delta(\mathbf x_1),...,\delta(\mathbf x_n)\right)^T$ follow a multivariate normal distribution
\begin{equation}
[\delta(\mathbf x_1),..., \delta(\mathbf x_n)\mid \sigma^2, \mathbf R ] \sim \mbox{MN}( \mathbf 0, \sigma^2 \mathbf R),
\end{equation}
where the $(i,\, j)$ entry of $\mathbf R$ is $K(\mathbf x_i, \mathbf x_j)$. Some frequently used kernel functions include the power exponential kernel and the Mat{\'e}rn kernel. We defer the issue of estimating the parameters in the kernel function in Section~\ref{sec:discretized_sgasp} and assume $K(\cdot, \, \cdot)$ is known for now.

The reproducing kernel Hilbert space (RKHS), denoted as $\mathcal H$, attached to the Gaussian stochastic process $\mbox{GaSP}(0, \sigma^2 K(\cdot, \cdot))$, is the completion of the space of all functions
\[\mathbf x \to \sum^k_{i=1} w_i K(\mathbf x_i, \mathbf x), \quad w_1,...,w_k \in \mathbb R,\, \mathbf x_1,..., \mathbf x_k, \mathbf x \in \mathcal X, \, k\in \mathbb N, \]
with the inner product 
\[ \left\langle\sum^k_{i=1} w_i K(\mathbf x_i,  \cdot) , \sum^m_{j=1} w_j K(\mathbf x_j,  \cdot) \right\rangle_{\mathcal H}=\sum^k_{i=1}\sum^m_{j=1} w_i w_j K(\mathbf x_i, \mathbf x_j). \]
  
For any function $f(\cdot)\in \mathcal H$,  denote $\| f\|_{\mathcal H}=\sqrt{\langle f, f \rangle_{\mathcal H} }$ the RKHS norm or the native norm. Because the evaluation maps in RKHS are bounded linear, it follows from the Riesz representation theorem that for each $\mathbf x \in \mathcal X$ and $f(\cdot) \in \mathcal H$, one has $f(\mathbf x)= \langle f(\cdot), \, K(\cdot, \mathbf x) \rangle_{\mathcal H}$. 
 
 
Denote $L_2(\mathcal X)$ the space of square-integrable functions $f: \mathcal X \to \mathbb R$ with $\int_{\mathbf x \in \mathcal X} f^2(\mathbf x)d \mathbf x<\infty$. We denote $\langle f, g \rangle_{L_2(\mathcal X)}:= \int_{\mathbf x \in \mathcal X} f(\mathbf x)g(\mathbf x)d \mathbf x$ the usual inner product in $L_2(\mathcal X)$. By Mercer's theorem, there exists an orthonormal sequence of continuous eigenfunctions $\{\phi_k\}^{\infty}_{k=1}$ with a sequence of non-increasing and non-negative eigenvalues $\{\rho_k\}^{\infty}_{k=1}$ such that 
\begin{align}
K( \mathbf x_a, \mathbf x_b )=  \sum^{\infty}_{k=1} \rho_k \phi_k(\mathbf x_a) \phi_k(\mathbf x_b),
\label{equ:mercer_gp}
\end{align}
for any $\mathbf x_a, \mathbf x_b \in \mathcal X$.
 
The RKHS $\mathcal H$ contains all functions $f(\cdot)= \sum^\infty_{k=1} f_k \phi_k(\cdot) \in L_2(\mathcal X)$ with $f_k=\langle f, \, \phi_k \rangle_{L_2(\mathcal X)}$  and $\sum^\infty_{k=1}f^2_k/\rho_k<\infty$. For any $g(\cdot)=\sum^\infty_{k=1} g_k \phi_k(\cdot) \in \mathcal H$ and $f(\cdot)$, the inner product can be represented as $ \langle f, g\rangle_{\mathcal H}=\sum^\infty_{k=1} f_k g_k/\rho_k $. For more properties of the RKHS, we refer to Chapter 1 of \cite{wahba1990spline} and Chapter 11 of  \cite{ghosal2017fundamentals}.



\subsection{The equivalence between the maximum likelihood estimator and the kernel ridge regression estimator in calibration}

Assume one has a set of observations $\mathbf y^F:=\left(y^F(\mathbf x_1),...,y^F(\mathbf x_n)\right)^T$ and mathematical model outputs $\mathbf f^M_{\bm \theta}:=( f^M(\mathbf x_1, \bm \theta),...,f^M(\mathbf x_n, \bm \theta))^T$, where $\bm \theta =(\theta_1,...,\theta_q)^T \in \bm \Theta\subset\mathbb{R}^q$ is a $q$-dimensional vector of the calibration parameters.

Denote the regularization parameter $\lambda:= \sigma^2_0/ (n\sigma^2)$. For the calibration model (\ref{equ:gp_calibration}) with $\delta$ modeled as a GaSP in (\ref{equ:GP}), the marginal distribution of $\mathbf y^F$ follows a multivariate normal after marginalizing out $\delta$,
\begin{equation}
[\mathbf y^F \mid \bm \theta,   \, \sigma^2_0, \, \lambda] \sim \mbox{MN}(  \mathbf f^M_{\bm \theta},\,  \sigma^2_0((n \lambda)^{-1} \mathbf R+ \mathbf I_n) ).
\label{equ:y_F_GP}
\end{equation}
Let $ \mathcal L( \bm \theta)$ be the likelihood for $\bm \theta$  in (\ref{equ:y_F_GP}) given the other parameters in the model. 
For any given $\lambda$, the maximum likelihood estimator (MLE) of $\bm \theta$ is denoted as
\begin{equation}
\hat {\bm \theta}_{\lambda,n}:= \argmax_{\bm \theta \in \bm \Theta} \mathcal L(\bm \theta).
\label{equ:MLE_theta_gp}
\end{equation}
Conditioning on the observations, $\hat {\bm \theta}_{\lambda,n}$ and $\lambda$, the predictive mean of the discrepancy function at any $\mathbf x \in \mathcal X$ has the following expression  
\begin{align}
\hat \delta_{\lambda,n}(\mathbf x) := \E[ \delta(\mathbf x)  \mid \mathbf y^F, \hat {\bm \theta}_{\lambda, n},  \, \lambda]= \mathbf r^T(\mathbf x) (\mathbf R + n\lambda \mathbf I_n)^{-1} \left(\mathbf y^F- \mathbf f^M_{\hat{\bm \theta}_{\lambda, n}}\right)  
\label{equ:pred_gasp}
\end{align}
with $\mathbf r(\mathbf x)=(K(\mathbf x_1, \mathbf x),...,K(\mathbf x_n, \mathbf x) )^T$ and $\mathbf I_n$ being the $n$-dimensional identity matrix.

It is well-known that the predictive mean in (\ref{equ:pred_gasp}) can be written as the estimator for the kernel ridge regression (KRR).  In the following lemma, we show that $(\hat {\bm \theta}_{\lambda,n}, \hat \delta_{\lambda,n}(\cdot))$ is equivalent to the KRR estimator.


\begin{lemma}
The maximum likelihood estimator $\hat {\bm \theta}_{\lambda,n}$ defined in (\ref{equ:MLE_theta_gp}) and predictive mean estimator $\hat \delta_{\lambda,n}(\cdot) $ defined in (\ref{equ:pred_gasp}) can be expressed as the estimator of the kernel ridge regression as follows
\begin{align*}
(\hat {\bm \theta}_{\lambda, n}, \hat \delta_{\lambda, n}(\cdot ) )=\underset{ \bm \theta \in \bm \Theta, \delta(\cdot) \in \mathcal H}{\argmin} \ell_{\lambda, n}(\bm \theta, \delta),
\end{align*}
where
 \begin{align}
 \ell_{\lambda, n}(\bm \theta, \delta)&=
 \frac{1}{n}\sum^n_{i=1}(y^F(\mathbf x_i) -f^M(\mathbf x_i, \bm \theta) -\delta(\mathbf x_i))^2 +\lambda \|\delta\|^2_{\mathcal H}  
 .
 \label{equ:KRR_gp}
 \end{align}

 \label{lemma:gp_est}
 \end{lemma}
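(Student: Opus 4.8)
The plan is to establish the equivalence by profiling the discrepancy function out of the loss $\ell_{\lambda,n}$ and matching the resulting objective in $\bm\theta$ with the marginal likelihood $\mathcal L(\bm\theta)$. Concretely, I would first fix $\bm\theta$ and solve the inner minimization $\min_{\delta\in\mathcal H}\ell_{\lambda,n}(\bm\theta,\delta)$, then substitute the optimal $\delta$ back in to obtain a profiled objective depending only on $\bm\theta$, and finally show this profiled objective is an affine function of $-\log\mathcal L(\bm\theta)$ so that their optimizers in $\bm\theta$ coincide.

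For the inner step, write $\mathbf d_{\bm\theta}=\mathbf y^F-\mathbf f^M_{\bm\theta}$ and $\bm\delta=(\delta(\mathbf x_1),\dots,\delta(\mathbf x_n))^T$. Because the data-fit term depends on $\delta$ only through $\bm\delta$, the standard representer-theorem argument applies: decomposing any $\delta\in\mathcal H$ into its projection onto $\mathrm{span}\{K(\mathbf x_i,\cdot)\}$ and an orthogonal remainder, the reproducing property $f(\mathbf x)=\langle f,K(\cdot,\mathbf x)\rangle_{\mathcal H}$ shows the remainder leaves $\bm\delta$ unchanged while only increasing $\|\delta\|^2_{\mathcal H}$; hence the minimizer has the form $\delta(\cdot)=\mathbf r^T(\cdot)\mathbf c$ with $\bm\delta=\mathbf R\mathbf c$ and $\|\delta\|^2_{\mathcal H}=\mathbf c^T\mathbf R\mathbf c$. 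Minimizing $\tfrac1n\|\mathbf d_{\bm\theta}-\mathbf R\mathbf c\|^2+\lambda\mathbf c^T\mathbf R\mathbf c$ over $\mathbf c$ gives the normal equation $\mathbf d_{\bm\theta}=(\mathbf R+n\lambda\mathbf I_n)\mathbf c$, so $\mathbf c=(\mathbf R+n\lambda\mathbf I_n)^{-1}\mathbf d_{\bm\theta}$, which reproduces exactly the predictive-mean formula (\ref{equ:pred_gasp}) once $\bm\theta$ is set to its optimizer.

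Substituting $\mathbf d_{\bm\theta}-\mathbf R\mathbf c=n\lambda\mathbf c$ back into the loss collapses both terms and yields the profiled objective $\lambda\,\mathbf d_{\bm\theta}^T(\mathbf R+n\lambda\mathbf I_n)^{-1}\mathbf d_{\bm\theta}$. On the likelihood side, from (\ref{equ:y_F_GP}) the covariance $\sigma^2_0((n\lambda)^{-1}\mathbf R+\mathbf I_n)$ does not depend on $\bm\theta$, so $-2\log\mathcal L(\bm\theta)$ equals a $\bm\theta$-free constant (including the log-determinant) plus the quadratic form $\mathbf d_{\bm\theta}^T[(n\lambda)^{-1}\mathbf R+\mathbf I_n]^{-1}\mathbf d_{\bm\theta}$. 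Using $[(n\lambda)^{-1}\mathbf R+\mathbf I_n]^{-1}=n\lambda(\mathbf R+n\lambda\mathbf I_n)^{-1}$, this quadratic form is exactly $n$ times the profiled objective, so maximizing $\mathcal L(\bm\theta)$ and minimizing the profiled loss are the same problem; their common argument is $\hat{\bm\theta}_{\lambda,n}$, and pairing it with the corresponding inner minimizer gives $\hat\delta_{\lambda,n}$, completing the identification.

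I expect the main obstacle to be bookkeeping rather than conceptual: one must verify that the log-determinant term in the marginal likelihood is genuinely independent of $\bm\theta$ (true here because only the mean depends on $\bm\theta$), and that the matrix identity $[(n\lambda)^{-1}\mathbf R+\mathbf I_n]^{-1}=n\lambda(\mathbf R+n\lambda\mathbf I_n)^{-1}$ is applied with the correct powers of $n\lambda$; these are what make the two objectives coincide up to the positive factor $n\lambda$ and an additive constant, neither of which affects the optimizer. A secondary point to state cleanly is the representer-theorem reduction to a finite-dimensional problem, since the minimization is a priori over the infinite-dimensional space $\mathcal H$.
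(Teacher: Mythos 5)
Your proposal is correct and follows essentially the same route as the paper's proof: a representer-theorem reduction to the finite-dimensional span of $\{K(\mathbf x_i,\cdot)\}$, profiling out $\delta$ for fixed $\bm\theta$, and matching the profiled objective $\lambda\,\mathbf d_{\bm\theta}^T(\mathbf R+n\lambda\mathbf I_n)^{-1}\mathbf d_{\bm\theta}$ with the Gaussian log-likelihood, whose covariance is free of $\bm\theta$. The only cosmetic difference is that you collapse the plug-back step via the identity $\mathbf d_{\bm\theta}-\mathbf R\mathbf c=n\lambda\mathbf c$ from the normal equation, whereas the paper reaches the same quadratic form through the Woodbury matrix identity.
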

 
 \begin{proof}[Proof of Lemma~\ref{lemma:gp_est}]
 
By the representer lemma \citep{rasmussen2006gaussian,wahba1990spline}, for any $\bm \theta \in \bm \Theta$ and $\mathbf x \in \mathcal X$, one has 
\begin{equation}
\hat \delta_{\lambda, n, \bm \theta}(\mathbf x)=\sum^n_{i=1}w_i(\bm \theta) K(\mathbf x_i, \mathbf x).
\label{equ:hat_delta_KRR}
\end{equation}
Denote $\mathbf w_{\bm \theta}=(w_1(\bm \theta),...,w_n(\bm \theta))^T$. Since $\langle K(\mathbf x_i,\cdot), K(\mathbf x_j,\cdot)\rangle_{\mathcal H}= K(\mathbf x_i, \mathbf x_j)$, (\ref{equ:KRR_gp}) becomes to find $\bm \theta$ and $\mathbf w_{\bm \theta} $ that minimize 
\begin{equation}
\frac{1}{n} (\mathbf y^F-\mathbf f^M_{\bm \theta} -\mathbf R \mathbf w_{\bm \theta} )^T(\mathbf y^F-\mathbf f^M_{\bm \theta} -\mathbf R \mathbf w_{\bm \theta} )+ \lambda \mathbf w^T_{\bm \theta} \mathbf R \mathbf w_{\bm \theta}. \label{equ:quadratic_w}
\end{equation}
For any $\bm \theta$, solving the minimization for (\ref{equ:quadratic_w}) with regard to $\mathbf w_{\bm \theta}$ gives
\begin{equation}
\mathbf {\hat w}_{\bm \theta}= (\mathbf R+n\lambda \mathbf I_n)^{-1}(\mathbf y^F - \mathbf f^M_{\bm \theta}). 
\label{equ:w_form}
\end{equation}
Then plugging $ \mathbf {\hat w}_{\bm \theta}$ into (\ref{equ:quadratic_w}), based on the Woodbury matrix identity, one has
\begin{align}
\nonumber &\frac{1}{n} (\mathbf y^F-\mathbf f^M_{\bm \theta} -\mathbf R \mathbf {\hat w}_{\bm \theta})^T(\mathbf y^F-\mathbf f^M_{\bm \theta} -\mathbf R \mathbf {\hat w}_{\bm \theta} )+ \lambda\mathbf {\hat w}_{\bm \theta}^T \mathbf R \mathbf {\hat w}_{\bm \theta}\\
\nonumber &\quad=\frac{1}{n}(\mathbf y^F-\mathbf f^M_{\bm \theta} )^T[ (\mathbf I_n - \mathbf R(\mathbf R+n\lambda \mathbf I_n)^{-1} )^T(\mathbf I_n - \mathbf R(\mathbf R+n\lambda \mathbf I_n)^{-1})] \\
\nonumber &\quad\quad+ \lambda (\mathbf y^F-\mathbf f^M_{\bm \theta} )^T (\mathbf R+n\lambda \mathbf I_n)^{-1} \mathbf R (\mathbf R+n\lambda \mathbf I_n)^{-1} (\mathbf y^F-\mathbf f^M_{\bm \theta} ) \\
&\quad= \lambda (\mathbf y^F-\mathbf f^M_{\bm \theta} )^T  (\mathbf R+n\lambda \mathbf I_n)^{-1}  (\mathbf y^F-\mathbf f^M_{\bm \theta}),
\label{equ:KKR_theta}
\end{align}
which shows that the minimizer of $\bm \theta$ on right-hand side of (\ref{equ:KKR_theta}) is the same as the MLE of $\bm \theta$ in (\ref{equ:MLE_theta_gp}). Finally, plugging the estimator $\bm {\hat \theta}_{\lambda, n}$ into (\ref{equ:quadratic_w}), the result follows from the KRR estimator of $\delta(\cdot)$ in  (\ref{equ:hat_delta_KRR}) with the weights in (\ref{equ:w_form}). 
\end{proof}

  



Although modeling the discrepancy function by the GaSP typically improves the prediction accuracy of the reality, the penalty term of (\ref{equ:KRR_gp}) only contains $\|\delta\|_{\mathcal H}$ to control the complexity of the discrepancy. As the RKHS norm is not equivalent to the $L_2$ norm, the calibrated computer model could deviate a lot from the best performed mathematical model in terms of the $L_2$ loss \citep{tuo2015efficient}. In Section~\ref{sec:SGP}, we introduce the scaled Gaussian stochastic process that predicts the reality as accurately as the GaSP with the aid of the mathematical model, but has more prior mass on the small $L_2$ distance between the reality and mathematical model. As a consequence,  the KRR estimator of the new model penalizes both $\|\delta\|_{\mathcal H}$ and $\|\delta\|_{L_2(\mathcal X)}$ simultaneously.

\section{Proof for Section \ref{sec:SGP}}
\label{sec:proof_SGP}

\begin{proof}[Proof of Lemma~\ref{lemma:KL_sgp}]
By Karhunen-Lo{\`e}ve expansion, we have
\begin{equation*}
\delta(\xbf) = \sigma \sum_{i=1}^{\infty}\sqrt{\rho_i}Z_i\phi_i(\xbf)
\end{equation*}
with $Z_i \overset{i.i.d}{\sim}  \mbox{N}(0,1) $. Denote $ W_k = \sum_{i=k+1}^{\infty} \rho_iZ_i^2 $ for any $k \in \mathbb N^{+}$. From the definition of $Z=\int_{\mathbf{x}\in\mathcal{X}} \delta^2(\mathbf x)d \mathbf{x}$ and $\int_{\mathbf{x}\in\mathcal{X}} \phi^2_i(\mathbf x)d\mathbf x=1$ for any $i \in \mathbb N^{+}$, it is straightforward to see that
\begin{equation}
Z = \sigma^2(\rho_1Z_1^2 + \cdots + \rho_kZ_k^2 + W_k).
\label{equ:Z_GP_expansion}
\end{equation}
	 

In the following expressions, we are conditioning on all parameters and they are dropped for simplicity. From the construction of 
\[
\delta_z(\xbf)= \left\{\delta(\xbf) \mid \int_{\mathbf{x}\in\mathcal{X}} \delta^2(\xbf) d\xbf=Z\right\}\]
 and $Z\sim p_Z(\cdot)$, the joint density of $(Z_1,...,Z_k, W_k)$ in the S-GaSP can be expressed as 
\begin{align*}
&p_{\delta_z}(Z_1=z_1, ..., Z_k=z_k, W_k=w_k)\\
&=\int_0^\infty p_{\delta} \parenth{Z_1=z_1, ..., Z_k=z_k, W_k=w_k \mid Z=z} p_Z(Z=z)dz\\
&\propto \int_0^\infty \frac{p_{\delta}\parenth{ Z=z, Z_1=z_1, ..., Z_k=z_k, W_k=w_k }}{p_{\delta}(Z=z)}  g_Z(Z=z) p_{\delta}(Z=z) dz\\
&\propto p_{\delta}(Z_1=z_1, ..., Z_k=z_k) p_{\delta}(W_k=w_k) \times \\
& \quad \quad    \int_0^\infty p_{\delta}(Z=z\mid Z_1=z_1, ..., Z_k=z_k, W_k=w_k)  \exp\parenth{-\frac{\lambda_zz}{2\sigma^2}} dz\\
&\propto p_{\delta}(Z_1=z_1, ..., Z_k=z_k) p_{\delta}(W_k=w_k) \times \\
& \quad \quad \int_0^\infty  \mathbbm 1\left\{{z = \sigma^2(\rho_1z_1^2 + \cdots + \rho_kz_k^2 + w_k)}\right\} \exp\parenth{-\frac{\lambda_zz}{2\sigma^2}}dz\\
&\propto\exp\parenth{-\frac{1}{2}\sum_{i=1}^{k}z_i^2} p_{\delta}(W_k=w_k) \exp\sqbracket{-\frac{\lambda_z}{2}\parenth{\sum_{i=1}^{k}\rho_iz_i^2 + w_k}}\\
&=\left\{ \prod_{i=1}^{k}\exp\sqbracket{-\frac{1}{2}(1 + \lambda_z\rho_i)z_i^2} \right\} p_{\delta}(W_k=w_k) \exp(-\lambda_zw_k/2) \,,
\end{align*}
where $\mathbbm 1$ in the fourth step is a Dirac delta function.  

After integrating out $ W_k $, it is clear that $ Z_i $'s are independently distributed as $ \mbox{N}(0, 1/(1+\lambda_z\rho_i)) $ under the measure induced by the S-GaSP. Since $k$ is arbitrary, we have
\begin{equation*}
\delta_z(\xbf) = \sigma \sum_{i=1}^{\infty}\sqrt{\frac{\rho_i}{1 + \lambda_z\rho_i}}Z_i\phi_i(\xbf)\,
\end{equation*}
with $Z_i \overset{i.i.d.}{\sim} \mbox{N}(0, 1)$, from which the proof is complete.
\end{proof}

\begin{proof}[Proof of Lemma \ref{lemma:RKHS_comparison}]
First note that for any $\mathbf x_a, \mathbf x_b \in \mathcal X$, we have $K(\mathbf x_a,\mathbf x_b)=\sum^{\infty}_{i=1} \rho_{i} \phi_i(\mathbf x_a) \phi_i(\mathbf x_b) $ and $K_z(\mathbf x_a,\mathbf x_b)=\sum^{\infty}_{i=1}\rho_{z,i} \phi_i(\mathbf x_a) \phi_i(\mathbf x_b) $ with $\rho_{z,i}=\rho_i / (1+\lambda_z \rho_i)$. For $h(\cdot)=\sum^{\infty}_{i=1}h_i \phi_i(\cdot) \in \mathcal H$ and $g(\cdot)=\sum^{\infty}_{i=1}g_i \phi_i(\cdot) \in \mathcal H$, one has
\begin{align*}
\langle h, g \rangle_{\mathcal H_z}=\sum^{\infty}_{i=1} \frac{1}{\rho_{z,i}} h_i g_i
=\sum^{\infty}_{i=1} \frac{1}{\rho_i} h_i g_i+ \lambda_z \sum^{\infty}_{i=1} h_i g_i
=\langle h, g \rangle_{\mathcal H} + \lambda_z \langle  h, g  \rangle_{L_2}.
\end{align*}
\end{proof}

\begin{proof}[Proof of Lemma~\ref{lemma:sgp_est}]
We show below that for any $\bm \theta \in \bm \Theta$ and any $\mathbf x \in \mathcal X$, one has 
\begin{equation}
\hat \delta_{\lambda, \lambda_z, n}(\mathbf x)=\sum^n_{i=1} w_{z,i}(\bm \theta) K_z(\mathbf x_i ,\mathbf x).
\label{equ:delta_z_regulariation}
\end{equation}

For any $\delta(\cdot) \in \mathcal H$, decomposing it into the linear combination of the basis $ \{ \lambda_z K_z(\mathbf x_i, \cdot) \}^{n}_{i=1}$ and the orthogonal complement $v(\cdot)$ gives
\[\delta(\cdot) =\sum^n_{i=1} \tilde w_{z,i}(\bm \theta) \lambda_z  K_z(\mathbf x_i , \cdot) +v(\cdot),\] 
where $\langle v(\cdot), \lambda_z K_z(\cdot, \mathbf x_i) \rangle_{\mathcal H_z}=0$ for $i=1,...,n$.

To evaluate $\delta(\cdot)$ at $\mathbf x_j$ for any $j=1,...,n$, we have
\begin{align*}
\delta(\mathbf x_j)&=\left\langle\sum^n_{i=1}  \tilde w_{z,i}(\bm \theta) \lambda_z K_z(\mathbf x_i , \cdot) +v(\cdot), K_z( \mathbf x_j,\cdot)  \right\rangle_{\mathcal H_z} \\
&= \sum^n_{i=1} \tilde w_{z,i}(\bm \theta) \lambda_z K_z(\mathbf x_i, \mathbf x_j),
\end{align*}
which is independent from $v(\cdot)$.  Hence the first term on right-hand side of (\ref{equ:KRR_sgp}) is also independent from $v(\cdot)$. For the second term on right-hand side of (\ref{equ:KRR_sgp}), since $v(\cdot)$ is orthogonal to $\{K_z(\mathbf x_i, \cdot) \}^{n}_{i=1}$, plugging in the decomposition of $\delta(\cdot)$, we have
\begin{align*}
\lambda \| \delta\|^2_{\mathcal H_z}&= \lambda (\| \sum^n_{i=1} \tilde w_{z,i}(\bm \theta) \lambda_z K_z(\mathbf x_i , \cdot)\|^2_{\mathcal H_z}+ \|v \|^2_{\mathcal H_z}) \\
& \geq \lambda \| \sum^n_{i=1} \tilde w_{z,i}(\bm \theta) \lambda_z K_z(\mathbf x_i , \cdot)\|^2_{\mathcal H_z}.
\end{align*}
Thus choosing $v(\cdot)=0$ does not change the first term on right-hand side of (\ref{equ:KRR_sgp}), but also minimizes the second term on right-hand side of (\ref{equ:KRR_sgp}). Letting $w_{z,i}(\bm \theta)= \tilde w_{z,i}(\bm \theta) \lambda_z$, we have proved (\ref{equ:delta_z_regulariation}). The rest of the proof can be derived similarly as the proof for Lemma~\ref{lemma:gp_est}, so it is omitted here.
\end{proof}

\section{Proof for Section~\ref{subsec:nonparametric_SGP}}
\label{sec:proof_nonparametric_SGP}

We prove Theorem \ref{thm:nonparametric} in this Section. Two auxiliary lemmas used for the proof of Theorem \ref{thm:nonparametric} are given after the proof.

\begin{proof}[\bf{Proof} for Theorem~\ref{thm:nonparametric}]

\noindent Define a new inner product on $\calH$ as
\begin{align}
\langle f, g\rangle_\lambda = (1 + \sqrt{\lambda})\langle f, g\rangle_{L_2(\mathcal X)} + \lambda\langle f, g\rangle_\calH
\end{align}

\noindent Let $f = \sum_{k=1}^\infty f_k\phi_k$ and $g = \sum_{k=1}^\infty g_k\phi_k$ be elements in $\calH$. Then
\[
\langle f,g\rangle_\lambda = (1+ \sqrt{\lambda})\sum_{k=1}^\infty f_kg_k + \lambda \sum_{k=1}^\infty\frac{f_kg_k}{\rho_k} = \sum_{k=1}^\infty \left(1 + \sqrt{\lambda} + \frac{\lambda}{\rho_k}\right)f_kg_k.
\]
By letting $\mu_k$ by $\mu_k^{-1} = 1 + \sqrt{\lambda} + \lambda/\rho_k$, we can define a new reproducing kernel
\begin{equation}
K_\lambda(\bx,\bx') = \sum_{k=1}^\infty \mu_k\phi_k(\bx)\phi_k(\bx')
\label{equ:K_lambda}
\end{equation}
Since $c_{\rho}^{-1} k^{-2m/p}\leq\rho_k\leq C_\rho^{-1} k^{-2m/p}$ and $|\phi_i(\cdot)| < C_{\phi}$  for some positive constants $c_\rho$, $C_{\rho}$ and $C_{\phi}$, bounding the sums by integrals, we have
\begin{align}
\sup_{\bx,\bx'}K_\lambda(\bx, \bx') &\leq C_\phi^2\sum_{k = 1}^\infty\frac{1}{1 + \lambda c_\rho k^{2m/p}} \leq C_\phi^2\sum_{k = 1}^\infty \int_{k - 1}^k\frac{1}{1 + \lambda c_\rho x^{2m/p}}dx\nonumber\\
& = C_\phi^2 c_\rho^{-p/2m}\lambda^{-p/2m}\int_0^\infty\frac{(\lambda c_\rho)^{p/2m}}{1 + \{(\lambda c_\rho)^{p/2m}x\}^{2m/p}}dx\nonumber\\
& = 
C_\phi^{2} c_{\rho}^{-p/2m}\lambda^{-p/2m}\int_0^\infty \frac{1}{1 + x^{2m/p}}dx\nonumber.
\end{align}
Thus
\begin{equation}
\sup_{\bx,\bx'}K_\lambda(\bx,\bx')\leq C^2_K\lambda^{-p/(2m)},
\label{equ:sup_K_bound}
\end{equation}
for some constant $C_K$ depending on $K$. Define the following linear operators $F_\lambda:\calH\to\calH$ and $P_\lambda:\calH\to\calH$ via
\[
(F_\lambda g)(\bx) = \int_\calX g(\bx')K_\lambda(\bx,\bx')\mathrm{d}\bx',\quad\text{and}\quad
(P_\lambda g)(\bx) = g(\bx) - (F_\lambda g)(\bx),
\]
Clearly, we have
\begin{align}
&\langle f, F_\lambda g\rangle_\lambda = \sum_{k=1}^\infty \langle f, \phi_k\rangle_{L_2(\mathcal X)}\langle g, \phi_k\rangle_{L_2(\mathcal X)} = \langle f, g\rangle_{L_2(\mathcal X)},\nonumber\\
&\langle f, P_\lambda g\rangle_\lambda = \langle f, g\rangle_\lambda - \langle f, F_\lambda g\rangle_\lambda = \sqrt{\lambda}\langle f, g\rangle_{L_2(\mathcal X)} + \lambda \langle f, g\rangle_{\calH}.
\end{align}

\noindent Denote the loss function 
\[
\ell_{n\lambda}(f) = \frac{1}{n}\sum_{i=1}^n(y_i-f(\bx_i))^2 + \sqrt{\lambda}\|f\|_{L_2(\mathcal X)}^2 + \lambda\|f\|_{\calH}^2,
\]
and the estimator $\hat f_{n\lambda}:= \argmin_{f \in \mathcal H} \ell_{n\lambda}(f)$. Let $D\ell_{n\lambda}(f):\calH\to\calH$ be the Frech\'et derivative of $\ell_{n\lambda}$ evaluated at $f$. Clearly, for any $g\in\calH$,
\begin{align}
D\ell_{n\lambda}(f)g &= \frac{2}{n}\sum_{i=1}^n(f(\bx_i) - y_i)\langle K_\lambda (\bx_i, \cdot), g(\cdot)\rangle_\lambda + 2\langle P_\lambda f, g\rangle_\lambda\nonumber\\
& = \left\langle
\frac{2}{n}\sum_{i=1}^n(f(\bx_i) - y_i)K_\lambda(\bx_i, \cdot) + 2(P_\lambda f)(\cdot), g(\cdot)
\right\rangle_\lambda.
\end{align}
It follows that $D\ell_{n\lambda}(\hat f_{n\lambda})g = 0$ for all $g\in\calH$, and hence,
 $S_{n\lambda}(\hat f_{n\lambda})(\cdot) = 0$, where
\[
S_{n\lambda}(f)(\cdot) = \frac{1}{n}\sum_{i=1}^n(y_i - f(\bx_i))K_\lambda(\bx_i, \cdot) - (P_\lambda f)(\cdot).
\]
Define $S_\lambda(f)(\cdot) = \E_{y, \mathbf x}(S_{n\lambda}(f) (\cdot))$. Then
\begin{align*}
S_\lambda (f)(\cdot) &= \int_{\mathbf{x}\in\mathcal{X}} (f_0(\bx) - f(\bx))K_\lambda(\bx, \cdot)\mathrm{d}\bx - (P_\lambda f)(\cdot)  \\
&= (F_\lambda(f_0 - f))(\cdot) - (P_\lambda f)(\cdot) = (F_\lambda f_0)(\cdot) - f(\cdot),
\end{align*}
and therefore, $S_\lambda (F_\lambda f_0)(\cdot) = 0$. 
\noindent
Let $\Delta f = \hat f_{n\lambda} - F_\lambda f_0$. By the definitions of $S_{n\lambda}$ and $S_\lambda$, we have
\begin{align*}
&\left\{S_{n\lambda}(\hat f_{n\lambda}) - S_\lambda(\hat f_{n\lambda})\right\}(\cdot) - 
\left\{S_{n\lambda}(F_\lambda f_0) - S_\lambda(F_\lambda f_0)\right\}(\cdot)\nonumber\\
 &\quad= \left\{S_{n\lambda}(\hat f_{n\lambda}) - S_{n\lambda}(F_\lambda f_0)\right\}(\cdot) - 
\left\{S_\lambda(\hat f_{n\lambda}) - S_\lambda(F_\lambda f_0)\right\}(\cdot)\nonumber\\
 &\quad= \frac{1}{n}\sum_{i=1}^n\left\{F_\lambda f_0(\bx_i) - \hat{f}_{n\lambda}(\bx_i)\right\}K_\lambda(\bx_i,\cdot) + P_\lambda\left\{(F_\lambda f_0)(\cdot) - \hat{f}_{n\lambda}(\cdot)\right\} \\
 &\quad\quad+ \hat{f}_{n\lambda}(\cdot) - (F_\lambda f_0)(\cdot)\nonumber\\
 &\quad= -\frac{1}{n}\sum_{i=1}^n\Delta f(\bx_i)K_\lambda(\bx_i, \cdot) - (P_\lambda\Delta f)(\cdot) + (\Delta f)(\cdot)\nonumber\\
 &\quad= -\frac{1}{n}\sum_{i=1}^n\Delta f(\bx_i)K_\lambda(\bx_i, \cdot) + \E_\bx\left\{\Delta f(\bx) K_\lambda(\bx, \cdot)\right\}\nonumber.
\end{align*}
On the other hand, 
$ S_{n\lambda}(\hat{f}_{n\lambda})(\cdot) = S_{\lambda}(F_\lambda f_0)(\cdot) = 0 $ and 
$ S_\lambda(\hat{f}_{n\lambda})(\cdot) = (F_\lambda f_0)(\cdot) - \hat{f}_{n\lambda}(\cdot) = -\Delta f(\cdot)$.
Therefore,
\[
\left\{S_{n\lambda}(\hat f_{n\lambda}) - S_\lambda(\hat f_{n\lambda})\right\}(\cdot) - 
\left\{S_{n\lambda}(F_\lambda f_0) - S_\lambda(F_\lambda f_0)\right\}(\cdot) = \Delta f(\cdot) - S_{n\lambda}(F_\lambda f_0)(\cdot).
\]
Define the event
\[
A_n(t) = \left\{
\left\|\frac{1}{n}\sum_{i = 1}^ng(\bx_i)K_\lambda(\bx_i,\cdot) - \E_\bx\{g(\bx)K_\lambda(\bx,\cdot)\}\right\|_\lambda<t\|g\|_{\lambda}
\text{ for all }g \in \calH
\right\}.
\]
Applying Lemma \ref{lemma:maximum_inequality} on $ g(\cdot) / \|g\|_\lambda $, 
\[
\prob_{\mathbf x}\{A_n(t)\}\geq 1- 2\exp\left\{-\lambda^{p(6m - p)/(4m^2)}\left(\frac{nt^2}{\kappa_K}\right)\right\}
\]
for some constant $\kappa_K > 0$. The deviation threshold $t$ will be specified later, and from now we consider data points $(\bx_i, y_i)_{i = 1}^n$ over the event $A_n(t)$. 

\noindent Over the event $A_n(t)$, we have
\begin{align}
&\left\{S_{n\lambda}(\hat f_{n\lambda}) - S_\lambda(\hat f_{n\lambda})\right\}(\cdot) - 
\left\{S_{n\lambda}(F_\lambda f_0) - S_\lambda(F_\lambda f_0)\right\}(\cdot)\nonumber\\
&\quad = -\frac{1}{n}\sum_{i=1}^n\Delta f(\bx_i)K_\lambda(\bx_i, \cdot) + \E_\bx\left\{\Delta f(\bx) K_\lambda(\bx, \cdot)\right\}\nonumber\\
&\quad = \Delta f(\cdot) - S_{n\lambda}(F_\lambda f_0)(\cdot)\nonumber,
\end{align}
implying that
\begin{align*}
\|\Delta f - S_{n\lambda}(F_\lambda f_0)\|_{\lambda} &= \left\|
\frac{1}{n}\sum_{i=1}^n\Delta f(\bx_i)K_\lambda(\bx_i, \cdot) - \E_\bx\left\{\Delta f(\bx) K_\lambda(\bx, \cdot)\right\}
\right\|_{\lambda}\\
&\leq t\|\Delta f\|_\lambda\nonumber.
\end{align*}

\noindent
Now we proceed to bound $\|S_{n\lambda}(F_\lambda f_0)\|_{\lambda}$. Write
\begin{align*}
&\|S_{n\lambda}(F_\lambda f_0)\|_{\lambda} \\
&\quad= \left\|
\frac{1}{n}\sum_{i = 1}^n\{y_i - F_\lambda f_0(\bx_i)\}K_\lambda(\bx_i,\cdot) - (P_\lambda F_\lambda f_0)(\cdot)
\right\|_{\lambda}\nonumber\\
&\quad\leq \left\|
\frac{1}{n}\sum_{i = 1}^n\{f_0(\bx_i) - F_\lambda f_0(\bx_i)\}K_\lambda(\bx_i,\cdot) - \{F_\lambda (f_0 -  F_\lambda f_0)\}(\cdot)
\right\|_{\lambda}\nonumber\\
&\quad\quad + \left\|\frac{1}{n}\sum_{i = 1}^n{\epsilon}_i K_\lambda(\bx_i,\cdot)\right\|_{\lambda}\nonumber\\
&\quad= \left\|
\frac{1}{n}\sum_{i = 1}^n\{f_0(\bx_i) - F_\lambda f_0(\bx_i)\}K_\lambda(\bx_i,\cdot) - \E_\bx [\{f_0(\bx) - F_\lambda f_0(\bx)\}K_\lambda(\bx,\cdot)]
\right\|_\lambda\nonumber\\
&\quad\quad + \left\|\frac{1}{n}\sum_{i = 1}^n{\epsilon}_iK_\lambda(\bx_i,\cdot)\right\|_{\lambda}\nonumber\\
&\quad\leq t\|f_0 - F_\lambda f_0\|_{\lambda} + \left\|\frac{1}{n}\sum_{i = 1}^n{\epsilon}_iK_\lambda(\bx_i,\cdot)\right\|_{\lambda},\nonumber
\end{align*}
where the last inequality is due to the construction of the event $A_n(t)$. To bound the second term of the preceding display, we let ${\bm \sigma}_\lambda = [K_\lambda(\bx_i, \bx_j)]_{n\times n}$ and ${ \bm {\epsilon}} = [{\epsilon}_1,\ldots,{\epsilon}_n]^T$. By the Hanson-Wright inequality \citep{rudelson2013hanson}, for all $x>0$, we have
\[
\prob_\bx\left[\bm {\epsilon}^T{\bm \sigma}_\lambda\bm {\epsilon}\geq \sigma_0^2\left\{\mathrm{tr}({\bm \sigma}_\lambda) + 2\sqrt{\mathrm{tr}({\bm \sigma}^2_\lambda)}x + 2\|{\bm \sigma}_\lambda\|_{\mathrm{F}}x^2\right\}\right]\leq\mathrm{e}^{-x^2}.
\]
Since by the Cauchy-Schwarz inequality,
\begin{align}
\mathrm{tr}({\bm \Sigma}_\lambda)& =  \sum_{i = 1}^n \|K_\lambda(\bx_i, \cdot)\|_{\lambda}^2= \sum_{i = 1}^n K_\lambda(\bx_i, \bx_i) \leq C^2_Kn\lambda^{-p/(2m)}\nonumber,\\
\mathrm{tr}({\bm \Sigma}_\lambda^2)& \leq \sum_{i = 1}^n\sum_{j = 1}^n \|K_\lambda(\bx_i, \cdot)\|_{L_2(\mathcal X)}\|K_\lambda(\bx_j, \cdot)\|_{L_2(\mathcal X)}\leq C_K^4n^2\lambda^{-p/m}\nonumber,\\
\|{\bm \sigma}_\lambda\|_{\mathrm{F}}& = \sqrt{\mathrm{tr}({\bm \Sigma}_\lambda^2)}\leq C^2_K n\lambda^{-p/(2m)},\nonumber
\end{align}
it follows that
\begin{align}
\mathrm{tr}({\bm \Sigma}_\lambda) + 2\sqrt{\mathrm{tr}({\bm \Sigma}^2_\lambda)}x + 2\|{\bm \Sigma}_\lambda\|_{\mathrm{F}}x^2\leq  C^2_K n\lambda^{-p/(2m)}(1 + 2x + 2x^2).
\end{align}
Set the event $B_n$ to be
\[
B_n = \left\{
\left\|\frac{1}{n}\sum_{i = 1}^ne_i K_\lambda(\bx_i, \cdot)\right\|_{\lambda}< \sigma_0{C_K} n^{-1/2}\lambda^{-p/(4m)}\alpha^{1/2}
\right\},
\]
where $\alpha = 2+3x^2$. Since $1+2x + 2x^2\leq 2+3x^2 = \alpha$, by taking $x=\sqrt{ (\alpha-2)/3}$, we have $\prob(B_n) \geq 1-\exp(-(\alpha-2)/3)  $ for any $\alpha>{2}$.
Putting all pieces obtained above together, we have
\begin{align}
\|\Delta f\|_{\lambda}&\leq \|\Delta f - S_{n\lambda}(F_\lambda f_0)\|_{\lambda} + \|S_{n\lambda}(F_\lambda f_0)\|_{\lambda}\nonumber\\
&\leq t\|\Delta f\|_{\lambda} + t\|f_0 - F_\lambda f_0\|_\lambda + \sigma_0{C_K}n^{-1/2}\lambda^{-p/(4m)}\alpha^{1/2}\nonumber\\
& = t\|\Delta f\|_{\lambda} + t\|P_\lambda f_0\|_\lambda + \sigma_0{C_K}n^{-1/2}\lambda^{-p/(4m)}\alpha^{1/2},
\end{align}
over the event $A_n(t)\cap B_n$. Now take $\lambda = n^{-2m/(2m + p)}$. Choose any $C_{\beta}\in (0, 1)$ and let $t=\sqrt{\kappa_K/(n^{(1-C_{\beta})\beta}\log(2))}$. Then, for sufficiently large $n$,
\[
\prob_{\mathbf x}\{A_n(t)\} \geq 1-2\exp\left\{-\frac{n^\beta t^2}{\kappa_K }\right\}\geq 1- \exp\left\{-{n^{C_{\beta}\beta}}\right\},
\]
where $\beta = (2m-p)^2/(2m(2m+p))$, and therefore,
\[
\|\Delta f\|_\lambda\leq \|P_\lambda f_0\|_\lambda + 2\sigma_0{C_K}n^{-m/(2m + p)}\alpha^{1/2},
\]
with probability at least  
\begin{align*}
&\prob\{A_n(t) \cap B_n)\}
= 1-\prob\{A^c_n(t) \cup B^c_n\} \\
&\quad\geq 1-\prob\{A^c_n(t)\}-\prob(B^c_n)
=1-\exp\{-(\alpha-2)/3\}-\exp\{-n^{C_{\beta}\beta}\}
\end{align*}
for sufficiently large $n$. 
Observe that
\begin{align}
\|P_\lambda f_0\|_\lambda^2 &= \left\|\sum_{k = 1}^\infty(1 - \mu_k)\langle f_0, \phi_k\rangle_{L_2(\mathcal X)}\phi_k(\cdot)
\right\|_{\lambda}^2 = \sum_{k = 1}^\infty \frac{(1 - \mu_k)^2}{\mu_k}\langle f_0,\phi_k\rangle_{L_2(\mathcal X)}^2\nonumber\\
& = \sum_{k = 1}^\infty \frac{(\sqrt{\lambda} + \lambda/\rho_k)^2}{1 + \sqrt{\lambda} + \lambda/\rho_k}\langle f_0,\phi_k\rangle_{L_2(\mathcal X)}^2
 \leq\sum_{k = 1}^\infty\frac{2\lambda +2 (\lambda/\rho_k)^2}{1 + \lambda/\rho_k}\langle f_0,\phi_k\rangle_{L_2(\mathcal X)}^2\nonumber\\
 &\leq 2\lambda\sum_{k = 1}^\infty \langle f_0,\phi_k\rangle_{L_2(\mathcal X)}^2 +2 \lambda\sum_{k = 1}^\infty \frac{1}{\rho_k}\langle f_0,\phi_k\rangle_{L_2(\mathcal X)}^2\nonumber\\
 &= 2\lambda\|f_0\|_{L_2(\mathcal X)}^2 + 2\lambda\|f_0\|_\calH^2\nonumber\\
 & \leq 2n^{-2m/(2m + p)}\left(\|f_0\|_{L_2(\mathcal X)} + \|f_0\|_\calH\right)^2\nonumber.
\end{align}
Hence, we proceed to compute
\begin{align}
\|\hat{f}_{n\lambda} - f_0\|_{L_2(\mathcal X)}&\leq 
\|\hat{f}_{n\lambda} - f_0\|_\lambda\nonumber\\
&\leq \|\hat{f}_{n\lambda} - F_\lambda f_0\|_\lambda + \|F_\lambda f_0 - f_0\|_\lambda\nonumber\\
&=\|\Delta f\|_\lambda + \|P_\lambda f_0\|_\lambda\nonumber\\
&\leq \left(2\sqrt{2}(\|f_0\|_{L_2(\mathcal X)} + \|f_0\|_\calH) + 2\sigma_0{C_K}\alpha^{1/2}\right)n^{-m/(2m + p)} \nonumber
\end{align}
with probability at least $1-\exp\{-(\alpha-2)/3\}-\exp\left(-n^{C_{\beta}\beta}\right)$ for sufficiently large $n$. The bound for $\|\hat f_{\lambda, \lambda_z, n}- f_0 \|_{\calH}$ follows immediately by the definition of $|| \cdot||_{\lambda}$, completing the proof.
\end{proof}


The following the Lemma~\ref{lemma:bounded_difference_inequality} is  Theorem 3.6 in \cite{pinelis1994optimum}, which is needed for the proof of Lemma~\ref{lemma:maximum_inequality}. 

\begin{lemma}
\label{lemma:bounded_difference_inequality}
Let $(X_j)_{j=0}^\infty$ be a sequence of random elements in a Hilbert space $\calH$ with norm $\|\cdot\|_\calH$. Suppose that $(X_j)_{j=0}^\infty$ forms a martingale in the sense that $\E(X_j\mid X_0,\ldots,X_{j-1}) = X_j$ a.s., and that the difference sequence $(D_j)_{j = 1}^\infty = (X_j - X_{j - 1})_{j = 1}^\infty$ satisfies $\|D_j\|_{\calH}^2\leq b_j^2$ a.s. and $\sum_{j = 1}^\infty b_j^2 \leq b_*^2$. Then for any $t\geq 0$,
\[
\prob\left(\sup_{j\geq 1}\|X_j\|_{\calH} \geq t\right)\leq 2\exp\left(-\frac{t^2}{2b_*^2}\right).
\]
\end{lemma}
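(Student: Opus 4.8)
The statement is a Hilbert-space version of the Azuma--Hoeffding inequality, so the plan is to build an exponential supermartingale, apply the maximal inequality for nonnegative supermartingales, and optimize over a free scale parameter. For each $\lambda>0$ I would set
\[
V_j = \cosh\!\left(\lambda\|X_j\|_\calH\right)\exp\!\left(-\frac{\lambda^2}{2}\sum_{i=1}^{j}b_i^2\right),\qquad V_0=\cosh\!\left(\lambda\|X_0\|_\calH\right),
\]
and argue that $(V_j)_{j\ge0}$ is a nonnegative supermartingale; since the martingale starts at $X_0=0$ one has $\expect[V_0]=1$. On the event $\{\sup_{j\ge1}\|X_j\|_\calH\ge t\}$ some index $j$ satisfies $\|X_j\|_\calH\ge t$, and because $\sum_{i\le j}b_i^2\le b_*^2$ this forces $V_j\ge\cosh(\lambda t)\exp(-\lambda^2 b_*^2/2)$. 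Ville's inequality $\prob(\sup_j V_j\ge a)\le\expect[V_0]/a$ then gives
\[
\prob\!\left(\sup_{j\ge1}\|X_j\|_\calH\ge t\right)\le\frac{\exp(\lambda^2 b_*^2/2)}{\cosh(\lambda t)}\le 2\exp\!\left(\frac{\lambda^2 b_*^2}{2}-\lambda t\right),
\]
using $\cosh u\ge\tfrac12 e^{u}$, and the choice $\lambda=t/b_*^2$ yields exactly $2\exp(-t^2/(2b_*^2))$.

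Everything thus reduces to the one-step estimate
\[
\expect\!\left[\cosh(\lambda\|X_j\|_\calH)\mid X_0,\dots,X_{j-1}\right]\le\cosh(\lambda\|X_{j-1}\|_\calH)\exp\!\left(\frac{\lambda^2 b_j^2}{2}\right),
\]
which is equivalent to the supermartingale property of $(V_j)$. Fixing the past and writing $x=X_{j-1}$ and $D=D_j$ with $\expect[D\mid\cdot]=0$ and $\|D\|_\calH\le b_j$, I would establish this through the $2$-smoothness of the Hilbert norm by a second-order expansion of $\psi(y):=\cosh(\lambda\|y\|_\calH)$. Using $\nabla^2\|y\|=\|y\|^{-1}(I-ee^{\top})$ with $e=y/\|y\|$ one computes
\[
\nabla\psi(y)=\lambda\sinh(\lambda\|y\|)\,e,\qquad \nabla^2\psi(y)=\lambda^2\cosh(\lambda\|y\|)\,ee^{\top}+\frac{\lambda\sinh(\lambda\|y\|)}{\|y\|}\left(I-ee^{\top}\right),
\]
and the elementary bound $\sinh u\le u\cosh u$ gives the operator-norm estimate $\|\nabla^2\psi(y)\|\le\lambda^2\cosh(\lambda\|y\|_\calH)$. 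A Taylor expansion $\psi(x+D)=\psi(x)+\langle\nabla\psi(x),D\rangle+\tfrac12\langle D,\nabla^2\psi(\xi)D\rangle$ then annihilates the linear term in conditional mean and leaves a quadratic remainder of order $\tfrac12\lambda^2 b_j^2\cosh(\lambda\|\xi\|_\calH)$.

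I expect the main obstacle to be precisely this remainder: the Hessian is evaluated at an intermediate point $\xi$ on the segment $[x,x+D]$, so a crude Taylor bound produces $\cosh(\lambda\|\xi\|_\calH)$ rather than $\cosh(\lambda\|x\|_\calH)$, and one must close the estimate multiplicatively so that the per-step factor is \emph{exactly} $\exp(\lambda^2 b_j^2/2)$ with no residual dependence on $\xi$. The clean resolution---the technical heart of Pinelis' treatment---is to characterize the worst-case mean-zero perturbation through the differential inequality that $\cosh$ satisfies on $2$-smooth spaces and integrate it, rather than bounding the Hessian pointwise. I would follow that route; alternatively, since the lemma is quoted verbatim as Theorem~3.6 of \cite{pinelis1994optimum}, one may simply invoke it for this step, as the authors do.
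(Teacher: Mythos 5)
The paper does not prove this lemma at all: it is imported verbatim as Theorem~3.6 of \cite{pinelis1994optimum}, and the surrounding text only records that it is needed for Lemma~\ref{lemma:maximum_inequality}. Your proposal therefore goes strictly further than the paper, and its skeleton is the correct one --- indeed it is essentially Pinelis' own strategy: form the exponential supermartingale $V_j=\cosh(\lambda\|X_j\|_{\calH})\exp(-\tfrac{\lambda^2}{2}\sum_{i\le j}b_i^2)$, apply Ville's maximal inequality, use $\cosh u\ge\tfrac12 e^u$, and optimize $\lambda=t/b_*^2$; each of these steps checks out (note you implicitly use $X_0=0$ to get $\expect[V_0]=1$, which holds in the paper's application and in Pinelis' statement, and note the paper's displayed martingale condition contains a typo, $\expect(X_j\mid X_0,\dots,X_{j-1})=X_{j-1}$ being intended). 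You also correctly isolate the genuine technical content, namely the one-step bound $\expect[\cosh(\lambda\|x+D\|_{\calH})]\le\cosh(\lambda\|x\|_{\calH})\exp(\lambda^2 b_j^2/2)$ for a bounded, conditionally centered increment, and you are right that a pointwise Hessian bound along the Taylor segment does not close multiplicatively with the sharp constant $1/2$; Pinelis obtains it via the $2$-smoothness machinery rather than a crude second-order expansion. Since you do not supply that one-step inequality yourself but defer to \cite{pinelis1994optimum} for it, your treatment ends up resting on exactly the same external result the paper cites --- which is acceptable here, and your sketch has the added value of showing how the tail bound follows from it.
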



The following maximum inequality for functional empirical processes in the Sobolev space $\calW_2^m(\calX, 1)$, which generalizes Lemma 5.1 in \cite{yang2017frequentist} to multivariate functions, is of fundamental importance to the proof of Theorem \ref{thm:nonparametric}. 

\begin{lemma}
\label{lemma:maximum_inequality}
Denote $ \calW_2^m(\calX, 1) = \left\{f\in\calW_2^m(\mathcal X):\|f\|_\lambda\leq 1\right\}$. Suppose $\bx_1,\ldots,\bx_n$ are independently and uniformly drawn from $\calX$. Then there exists some constant $\kappa_K$ depending on the kernel $K$, such that for any $t>0$,
\begin{align*}
&\prob_\bx\left(
\sup_{g\in \calW_2^m(\calX, 1)}\left\|\frac{1}{n}\sum_{i=1}^n\left[g(\bx_i)K_\lambda(\bx_i,\cdot) - \E_\bx \left\{g(\bx)K_\lambda(\bx,\cdot)\right\}\right]\right\|_\lambda\geq t
\right)\nonumber\\
&\quad\leq 2\exp\left\{-\frac{\lambda^{d(6m - d)/(4m^2)}nt^2}{\kappa_K}\right\}.
\end{align*}
\end{lemma}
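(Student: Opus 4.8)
The plan is to recognize the centered empirical process as a \emph{random operator} and then control its norm with the Hilbert-space inequality of Lemma~\ref{lemma:bounded_difference_inequality}. The starting observation is that $K_\lambda$ defined in (\ref{equ:K_lambda}) is the reproducing kernel for the inner product $\langle\cdot,\cdot\rangle_\lambda$: writing $g=\sum_k g_k\phi_k$ and using $\mu_k^{-1}=1+\sqrt{\lambda}+\lambda/\rho_k$, one checks $\langle g, K_\lambda(\bx,\cdot)\rangle_\lambda=\sum_k g_k\phi_k(\bx)=g(\bx)$. Consequently the quantity inside the supremum is \emph{linear} in $g$: with $\Psi_i:=K_\lambda(\bx_i,\cdot)\otimes K_\lambda(\bx_i,\cdot)$ denoting the rank-one operator $w\mapsto\langle w,K_\lambda(\bx_i,\cdot)\rangle_\lambda K_\lambda(\bx_i,\cdot)$, we have
\[
\frac1n\sum_{i=1}^n\big[g(\bx_i)K_\lambda(\bx_i,\cdot)-\expect_\bx\{g(\bx)K_\lambda(\bx,\cdot)\}\big]=\Phi_n g,\qquad \Phi_n:=\frac1n\sum_{i=1}^n(\Psi_i-\expect\Psi_i).
\]
Hence the supremum over $\{\|g\|_\lambda\le1\}$ equals the operator norm $\|\Phi_n\|_{\mathrm{op}}$, and since $\|\Phi_n\|_{\mathrm{op}}\le\|\Phi_n\|_{\mathrm{HS}}$, it suffices to control the Hilbert--Schmidt norm of the self-adjoint operator $\Phi_n$.

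First I would realize $\Phi_n$ as the terminal value of a martingale \emph{in the Hilbert space of Hilbert--Schmidt operators} on $\calH$. The partial sums $X_j=\frac1n\sum_{i\le j}(\Psi_i-\expect\Psi_i)$ form a martingale because the $\bx_i$ are i.i.d.\ and each summand is centered. Using the reproducing property, $\|\Psi_i\|_{\mathrm{HS}}=\|K_\lambda(\bx_i,\cdot)\|_\lambda^2=K_\lambda(\bx_i,\bx_i)$, which by (\ref{equ:sup_K_bound}) is at most $C_K^2\lambda^{-p/(2m)}$ for every $\bx_i$; together with Jensen's inequality $\|\expect\Psi_i\|_{\mathrm{HS}}\le\expect\|\Psi_i\|_{\mathrm{HS}}\le C_K^2\lambda^{-p/(2m)}$, the increments obey $\|X_j-X_{j-1}\|_{\mathrm{HS}}\le 2C_K^2\lambda^{-p/(2m)}/n=:b_j$ almost surely, so $\sum_{j=1}^n b_j^2\le 4C_K^4\lambda^{-p/m}/n=:b_*^2$. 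Applying Lemma~\ref{lemma:bounded_difference_inequality} then yields
\[
\prob_\bx\big(\|\Phi_n\|_{\mathrm{HS}}\ge t\big)\le 2\exp\!\left(-\frac{t^2}{2b_*^2}\right)=2\exp\!\left(-\frac{n t^2\lambda^{p/m}}{8C_K^4}\right).
\]

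Finally I would reconcile the exponent $\lambda^{p/m}$ with the asserted $\lambda^{p(6m-p)/(4m^2)}$ (the $d$ in the statement being $p$). Because $m>p/2$, the elementary inequality $4m\le 6m-p$ gives $p/m\le p(6m-p)/(4m^2)$, whence $\lambda^{p/m}\ge\lambda^{p(6m-p)/(4m^2)}$ for every $\lambda\in(0,1)$; inserting this and setting $\kappa_K=8C_K^4$ turns the last display into exactly the claimed bound. In particular the operator route delivers a slightly \emph{stronger} concentration than stated.

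The main obstacle is conceptual rather than computational: recognizing that the functional supremum collapses to an operator norm via the reproducing identity for $K_\lambda$, and then lifting Lemma~\ref{lemma:bounded_difference_inequality} into the Hilbert space of Hilbert--Schmidt operators while verifying the almost-sure increment bound there. The only quantitative input is the uniform kernel bound (\ref{equ:sup_K_bound}), which simultaneously controls $\|\Psi_i\|_{\mathrm{HS}}$ and $\expect\|\Psi_i\|_{\mathrm{HS}}$; everything else is bookkeeping. A more laborious alternative---truncating the eigen-expansion at level $J\asymp\lambda^{-p/(2m)}$, covering the resulting finite-dimensional ball by an $\epsilon$-net, applying Lemma~\ref{lemma:bounded_difference_inequality} at each net point, and optimizing $J$ against the metric entropy---reproduces precisely the weaker exponent $\lambda^{p(6m-p)/(4m^2)}$, but the Hilbert--Schmidt argument above avoids the net entirely.
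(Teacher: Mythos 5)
Your argument is correct, and it takes a genuinely different route from the paper. The paper proves Lemma~\ref{lemma:maximum_inequality} by a chaining argument: it bounds the Orlicz $\psi_2$-norm of increments $\|Z_{n\lambda}(g)-Z_{n\lambda}(h)\|_\lambda$ in terms of $\|g-h\|_{L_\infty}$ via Lemma~\ref{lemma:bounded_difference_inequality}, builds a multiscale net whose cardinality is controlled by the metric entropy of the ball $\{f\in\calH:\|f\|_\calH\leq\lambda^{-1/2}\}$ in $L_\infty$, and integrates the entropy; it is exactly this entropy integral that produces the exponent $\lambda^{p(6m-p)/(4m^2)}$. You instead exploit the fact that $g\mapsto \frac{1}{n}\sum_i g(\bx_i)K_\lambda(\bx_i,\cdot)-\expect_\bx\{g(\bx)K_\lambda(\bx,\cdot)\}$ is \emph{linear} in $g$ (via the reproducing property of $K_\lambda$ in $\langle\cdot,\cdot\rangle_\lambda$), so the supremum over the unit $\lambda$-ball collapses to the operator norm of $\Phi_n=\frac1n\sum_i(\Psi_i-\expect\Psi_i)$, which you dominate by its Hilbert--Schmidt norm and control with a single application of Lemma~\ref{lemma:bounded_difference_inequality} in the Hilbert space of Hilbert--Schmidt operators; the only quantitative input is $\|\Psi_i\|_{\mathrm{HS}}=K_\lambda(\bx_i,\bx_i)\leq C_K^2\lambda^{-p/(2m)}$ from (\ref{equ:sup_K_bound}), plus Jensen for $\|\expect\Psi_i\|_{\mathrm{HS}}$. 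Your identification of the constraint set with the unit $\lambda$-ball is legitimate ($\|f\|_\lambda<\infty$ forces $f\in\calH\subset\calW_2^m(\calX)$ under (\ref{equ:eigen_value_decay})), and your exponent comparison is right: since $m>p/2$ gives $6m-p>4m$, one has $\lambda^{p/m}\geq\lambda^{p(6m-p)/(4m^2)}$ for $\lambda\in(0,1)$, so your tail bound $2\exp\{-nt^2\lambda^{p/m}/(8C_K^4)\}$ is strictly stronger than the one asserted and implies it with $\kappa_K=8C_K^4$. What each approach buys: yours is shorter, avoids covering numbers entirely, and yields a sharper rate in $\lambda$ (which would propagate to a larger admissible deviation threshold $t$ in the proof of Theorem~\ref{thm:nonparametric}); the paper's chaining argument is more robust in that it would survive if the indexing class entered nonlinearly, but for this linear functional it is needlessly lossy. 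The one point worth making explicit in a final write-up is that $\expect\Psi_i$ is a well-defined Bochner integral in the Hilbert--Schmidt space (immediate from the uniform bound on $\|\Psi_i\|_{\mathrm{HS}}$), so that Lemma~\ref{lemma:bounded_difference_inequality} indeed applies in that space.
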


\begin{proof}[\bf{Proof of Lemma \ref{lemma:maximum_inequality}}]
We follow the argument used in the proof of Lemma 6.1 in \cite{yang2017non}. 
Denote
\[
\{Z_{n\lambda}(g)\}(\cdot) = \frac{1}{n}\sum_{i=1}^n[g(\bx_i)K_\lambda(\bx_i, \cdot) - \E_\bx\{g(\bx)K_\lambda(\bx, \cdot)\}]. 
\]
Fix $g$, $h\in \calH$, $n$, and $\lambda$, consider the following sequence of martingale $(X_j)_{j = 0}^\infty$ in $\calH$:
\[
X_j = \left\{
\begin{array}{ll}
0,&\quad\text{if }j = 0,\\
j\{Z_{j\lambda}(g) - Z_{j\lambda}(h)\},&\quad\text{if }j = 1,\ldots,n\\
X_n,&\quad\text{if }j \geq n + 1.
\end{array}
\right.
\]
Clearly, for $j = 1,\ldots, n$,
\[
(X_j - X_{j - 1})(\cdot) = \{g(\bx_j) - h(\bx_j)\}K_\lambda(\bx_j, \cdot) - \E_\bx[\{g(\bx_j) - h(\bx_j)\}K_\lambda(\bx_j, \cdot)]
\]
and $X_j - X_{j - 1} = 0$ for $j\geq n + 1$. 
Observe that 
\[
\|K_\lambda(\bx_j, \cdot)\|_\lambda = \sqrt{\langle K_\lambda(\bx_j, \cdot),K_\lambda(\bx_j, \cdot)\rangle_\lambda} = \sqrt{K_\lambda(\bx_j, \bx_j)}\leq {C_K}\lambda^{-d/(4m)}
\]
with probability one.
Therefore, with probability one, we have
\[
\|X_j - X_{j - 1}\|_{\lambda}^2 \leq 4 C^2_K\lambda^{-d/(2m)}\|g - h\|_{L_\infty}^2
\]
for $j = 1,\ldots,n$, and hence, we invoke the bounded difference inequality for martingales in Banach space (Lemma \ref{lemma:bounded_difference_inequality}) to derive
\begin{align}
\prob\left(\|Z_{n\lambda}(g) - Z_{n\lambda}(h)\|_\lambda\geq t\right)
& = \prob\left(\|n\{Z_{n\lambda}(g) - Z_{n\lambda}(h)\}\|_\lambda\geq nt\right)\nonumber\\
&\leq \prob\left(\sup_{j\geq 1}\|j\{Z_{j\lambda}(g) - Z_{j\lambda}(h)\}\|_\lambda\geq nt\right)\nonumber\\
&\leq 2\exp\left\{-\frac{nt^2}{8C^2_K\lambda^{-d/(2m)}\|g - h\|_{L_\infty}^2} \right\}\nonumber.
\end{align}
Applying Lemma 8.1 in \cite{kosorok2008introduction}, we obtain the following bound
\begin{align}
\left\|
\|Z_{n\lambda}(g) - Z_{n\lambda}(h)\|_\lambda
\right\|_{\psi_2}
\label{eqn:Z_nlambda_g_Orlicz_norm_bound}
&\leq \frac{\sqrt{24}C^2_K}{\sqrt{n}}\lambda^{-d/(4m)}\|g-h\|_{L_\infty},
\end{align}
where $\|\cdot\|_{\psi_2}$ is the Orlicz norm associated with $\psi_2(s) = \exp(s^2) - 1$. 

Now let $\tau = \{\log(3/2)\}^{1/2}$ and set $\phi(x) = \psi_2(\tau x)$. Clearly, $\phi(1) = 1/2$, and $\phi(x)\phi(y)\leq \phi(xy)$ for any $x,y\geq1$. Applying Lemma 8.2 in \cite{kosorok2008introduction}, the Orlicz norm of the maximum of finitely many random variables can be bounded by the maximum of these Orlicz norms as follows:
\begin{align}
\left\|\max_{1\leq i\leq k}(\tau\xi_i)\right\|_{\psi_2}=
\left\|\max_{1\leq i\leq k}\xi_i\right\|_\phi \leq 2\phi^{-1}(k)\max_{1\leq i\leq k}\|\xi_i\|_{\phi} = \frac{2}{\tau}\psi_2^{-1}(k)\max_{1\leq i\leq k}\|\tau\xi_i\|_{\psi_2}\nonumber,
\end{align}
namely,
\begin{align}
\label{eqn:finite_maximal_inequality}
\left\|\max_{1\leq i\leq k}\xi_i\right\|_{\psi_2} \leq  \frac{2}{\tau}\psi_2^{-1}(k)\max_{1\leq i\leq k}\|\xi_i\|_{\psi_2},
\end{align}
where $\{\xi_i\}_{i=1}^k$ are finitely many random variables. 

Next we apply the ``chaining'' argument. Let $\varepsilon>0$ be some constant to be determined later. 
 Construct a sequence of function classes $({\mathcal{G}}_j)_{j = 0}^\infty$ in $\calH_\lambda(1)$ satisfying the following conditions:
\begin{itemize}
	\item[(i)] For any ${\mathcal{G}}_j$ and any $h_j, g_j\in {\mathcal{G}}_j$, $\|h_j-g_j\|_{L_\infty}\geq \varepsilon /2^j$, and ${\mathcal{G}}_j$ is maximal in the sense that for any $g_j\notin {\mathcal{G}}_j$, there exists some $h_j\in {\mathcal{G}}_j$ such that $\|h_j - g_j\|<\varepsilon/2^j$.
	\item[(ii)] For any ${\mathcal{G}}_{j + 1}$, and any $g_{j + 1}\in{\mathcal{G}}_{j + 1}$, select a unique element $g_j\in{\mathcal{G}}_j$ such that $\|g_{j + 1} - g_j\|_{L_\infty}\leq \varepsilon/2^j$. Thus, there exists a finite sequence $(g_0, g_1, \ldots, g_{j + 1})$ such that $\|g_i - g_{i + 1}\|_{L_\infty}\leq \varepsilon/2^i$ for $i = 0,\ldots,j$, and $g_i\in{\mathcal{G}}_i$. 
\end{itemize}
Therefore, for any $g_{j + 1}, h_{j + 1}\in {\mathcal{G}}_{j + 1}$ with $\|g_{j + 1} - h_{j + 1}\|_{L_\infty}\leq\varepsilon$, there exists two sequences $(g_i)_{i = 0}^{j + 1}$, $(h_i)_{i = 0}^{j + 1}$, such that $g_i, h_i\in{\mathcal{G}}_i$, $\max\{\|g_i - g_{i + 1}\|_{L_\infty}, \|h_i - h_{i + 1}\|_{L_\infty}\}\leq \varepsilon/2^i$, and that
\begin{align*}
\|g_0 - h_0\|_{L_\infty}&\leq \sum_{i = 0}^j\left(\|g_i - g_{i + 1}\|_{L_\infty} + \|h_i - h_{i + 1}\|_{L_\infty}\right) + \|h_{j + 1} - g_{j + 1}\|_{L_\infty}\\
&\leq 2\sum_{i = 0}^j\frac{\varepsilon}{2^i} + \varepsilon \leq 5\varepsilon,
\end{align*}
and hence, by (\ref{eqn:Z_nlambda_g_Orlicz_norm_bound}) one has
\begin{align}
\label{eqn:g_0h_0_diff_bound}
\|\|Z_{n\lambda}(g_0) - Z_{n\lambda}(h_0)\|_\lambda\|_{\psi_2}\leq \frac{5\sqrt{24}C^2_K}{\sqrt{n}}\lambda^{-p/(4m)}\varepsilon.
\end{align}
We also notice that ${\mathcal{G}}_j\subset\calH_\lambda(1)\subset\{f\in\calH:\|f\|_\calH\leq\lambda^{-1/2}\}$, and therefore, the cardinality of ${\mathcal{G}}_j$ can be bounded by the metric entropy of $\{f\in\calH:\|f\|_\calH\leq\lambda^{-1/2}\}$, which is known in the literature \citep{edmunds2008function}:
\begin{align*}
\log|{\mathcal{G}}_j|&\leq \log\mathcal{N}_{[{\boldsymbol{\cdot}}]}\left(\varepsilon/2^j, \{f\in\calH:\|f\|_\calH\leq\lambda^{-1/2}\}, \|\cdot\|_{L_\infty}\right)\nonumber\\
&\leq c_0\lambda^{-p/(2m)}\left(\frac{\varepsilon}{2^j}\right)^{-p/m},
\end{align*}
where $c_0$ is some absolute constant. 

Now suppose $g$, $h$ are arbitrary functions in $\calH_\lambda(1)$ such that $\|g - h\|_{L_\infty}\leq\varepsilon/2$. For any $j\geq 2$, there exists $g_j, h_j\in{\mathcal{G}}_j$ such that
\[
\max\{\|g_j - g\|_{L_\infty}, \|h_j - h\|_{L_\infty}\}\leq \varepsilon/2^j,
\]
and hence, $\|g_j - h_j\|_{L_\infty}\leq \varepsilon$. Therefore, for any $j\geq 2$,
\begin{align}
&\left\|\sup_{g,h\in\mathcal W^m_2(\mathcal X, 1), \|g - h\|_{L_\infty}\leq\varepsilon}\|Z_{n\lambda}(g) - Z_{n\lambda}(h)\|_\lambda\right\|_{\psi_2}\nonumber\\
&\quad\leq \bigg\| \bigg. \sup_{g,h\in\mathcal W^m_2(\mathcal X, 1), \|g - h\|_{L_\infty}\leq\varepsilon}\bigg( \bigg. \|Z_{n\lambda}(g) - Z_{n\lambda}(g_j)\|_\lambda + \|Z_{n\lambda}(g_j) - Z_{n\lambda}(h_j)\|_\lambda  \nonumber\\
&\quad\quad+ \|Z_{n\lambda}(h_j) - Z_{n\lambda}(h)\|_\lambda \bigg. \bigg)\bigg.  \bigg\|_{\psi_2}\nonumber\\
&\quad\leq \frac{2\sqrt{24}C^2_K}{\sqrt{n}}\lambda^{-d/(4m)}
\max\left\{\|g - g_j\|_{L_\infty}, \|h - h_j\|_{L_\infty}\right\} \nonumber\\
&\quad\quad+ \left\|\sup_{g,h\in\mathcal W^m_2(\mathcal X, 1),\|g - h\|_{L_\infty}\leq \varepsilon}\|Z_{n\lambda}(g_j) - Z_{n\lambda}(h_j)\|_\lambda\right\|_{\psi_2}\nonumber\\
&\quad\leq \frac{2\sqrt{24}C^2_K}{\sqrt{n}}\frac{\lambda^{-d/(4m)}\varepsilon}{2^j} + \left\|\max_{g_j, h_j\in{\mathcal{G}}_j,\|g_j - h_j\|_{L_\infty}\leq\varepsilon}\|Z_{n\lambda}(g_j) - Z_{n\lambda}(h_j)\|_\lambda\right\|_{\psi_2}\nonumber.
\end{align}
We focus on the second term of the preceding display. Fix $j\geq 2$, for any $g_j, h_j\in{\mathcal{G}}_j$, consider the finite sequences $(g_0, g_1,\ldots, g_j)$ and $(h_0, h_1,\ldots, h_j)$ such that $g_i, h_i\in{\mathcal{G}}_i$ and $\|g_i - g_{i + 1}\|_{L_\infty}\leq \varepsilon/2^i$, $i = 1,\ldots,j - 1$. Invoking the inequality \eqref{eqn:finite_maximal_inequality}, we have
\begin{align}
&\left\|\max_{g_j,h_j\in{\mathcal{G}}_j, \|g_j - h_j\|_{L_\infty}\leq \varepsilon}\|Z_{n\lambda}(g_j) - Z_{n\lambda}(h_j)\|_\lambda\right\|_{\psi_2}\nonumber\\
&\quad \leq \left\|\max_{g_j,h_j\in{\mathcal{G}}_j, \|g_j - h_j\|_{L_\infty}\leq \varepsilon}\|\{Z_{n\lambda}(g_j) - Z_{n\lambda}(h_j)\} - \{Z_{n\lambda}(g_0) - Z_{n\lambda}(h_0)\}\|_\lambda\right\|_{\psi_2}\nonumber\\
&\quad\quad + \left\|\max_{g_0,h_0\in{\mathcal{G}}_0, \|g_j - h_j\|_{L_\infty}\leq \varepsilon}\|Z_{n\lambda}(g_0) - Z_{n\lambda}(h_0)\|_\lambda\right\|_{\psi_2}\nonumber\\
&\quad\leq \left\|\max_{g_j,h_j\in{\mathcal{G}}_j, \|g_j - h_j\|_{L_\infty}\leq \varepsilon}\|\{Z_{n\lambda}(g_j) - Z_{n\lambda}(h_j)\} - \{Z_{n\lambda}(g_0) - Z_{n\lambda}(h_0)\}\|_\lambda\right\|_{\psi_2}\nonumber\\
&\quad\quad + \frac{2}{\tau}\sqrt{\log\left(1 + |{\mathcal{G}}_0\times{\mathcal{G}}_0|\right)}
\max_{(g_0, h_0)\in{\mathcal{G}}_0\times{\mathcal{G}}_0, \|g_j - h_j\|_{L_\infty}\leq\varepsilon}\left\|\|Z_{n\lambda}(g_0) - Z_{n\lambda}(h_0)\|_\lambda\right\|_{\psi_2}
\nonumber.
\end{align}
Clearly, the second term can be bounded by inequality \eqref{eqn:g_0h_0_diff_bound}: 
\begin{align}
&\frac{2}{\tau}\sqrt{\log\left(1 + |{\mathcal{G}}_0\times{\mathcal{G}}_0|\right)}
\max_{(g_0, h_0)\in{\mathcal{G}}_0\times{\mathcal{G}}_0, \|g_j - h_j\|_{L_\infty}\leq\varepsilon}\left\|\|Z_{n\lambda}(g_0) - Z_{n\lambda}(h_0)\|_\lambda\right\|_{\psi_2}\nonumber\\
&\quad\leq \left(\frac{10\sqrt{24}C^2_K}{\tau}\right)\frac{\lambda^{-p/(4m)}\varepsilon}{\sqrt{n}}\sqrt{\log\left\{1 + \exp\left(2c_0\lambda^{-p/(2m)}\varepsilon^{-p/m}\right)\right\}}\nonumber,
\end{align}
since
\begin{align}
|{\mathcal{G}}_0\times{\mathcal{G}}_0| &= |{\mathcal{G}}_0|^2 \leq \exp(2\log\mathcal{N}_{[{\boldsymbol{\cdot}}]}(\varepsilon,\{\|f\|_\lambda\leq 1\},\|\cdot\|_{L_\infty}))\nonumber\\
&\leq\exp(2c_0\lambda^{-p/(2m)}\varepsilon^{-p/m}),
\end{align}
 it suffices to bound the first term. Write
\begin{align}
&\left\|\max_{g_j,h_j\in{\mathcal{G}}_j, \|g_j - h_j\|_{L_\infty}\leq \varepsilon}\|\{Z_{n\lambda}(g_j) - Z_{n\lambda}(h_j)\} - \{Z_{n\lambda}(g_0) - Z_{n\lambda}(h_0)\}\|_\lambda\right\|_{\psi_2}\nonumber\\
&\quad\leq 2\sum_{i = 0}^{j - 1}\left\|\max_{(g_i, g_{i + 1})\in{\mathcal{G}}_i\times{\mathcal{G}}_{i + 1}, \|g_i - g_{i + 1}\|_{L_\infty}\leq \varepsilon/2^i}
\|Z_{n\lambda}(g_{i + 1}) - Z_{n\lambda}(g_i)\|_\lambda\right\|_{\psi_2}\nonumber\\
&\quad\leq 2\sum_{i = 0}^{j - 1}\frac{2}{\tau}\sqrt{\log(1 + |{\mathcal{G}}_i|\times|{\mathcal{G}}_{i + 1}|)}\nonumber\\
&\quad\quad\times\max_{(g_i, g_{i + 1})\in{\mathcal{G}}_i\times{\mathcal{G}}_{i + 1}, \|g_i - g_{i + 1}\|_{L_\infty}\leq \varepsilon/2^i}\left\|\|Z_{n\lambda}(g_{i + 1}) - Z_{n\lambda}(g_i)\|_\lambda\right\|_{\psi_2}\nonumber\\
&\quad\leq \frac{4\sqrt{24}C^2_K\lambda^{-p/(4m)}}{\tau\sqrt{n}}\sum_{i = 0}^{j - 1}\sqrt{\log\left[1 + \exp\left\{2c_0\lambda^{-p/(2m)}\left(\varepsilon/2^i\right)^{-p/m}\right\}\right]}\varepsilon/2^i\nonumber,
\end{align}
where inequalities \eqref{eqn:Z_nlambda_g_Orlicz_norm_bound} and \eqref{eqn:finite_maximal_inequality} are applied. Bounding the sum by integral, we have
\begin{align}
&\sum_{i = 0}^{j - 1}\sqrt{\log\left[1 + \exp\left\{2c_0\lambda^{-p/(2m)}\left(\varepsilon/2^i\right)^{-p/m}\right\}\right]}\varepsilon/2^i
\nonumber\\
&\quad\leq \sum_{i = 0}^{j - 1}\int_{\varepsilon/2^{i + 1}}^{\varepsilon/2^i}\sqrt{\log\{1 + \exp(2c_0\lambda^{-p/(2m)} x^{-p/m})\}}\mathrm{d}x\nonumber\\
&\quad\leq \int_0^\varepsilon\sqrt{\log \{1 + \exp(2c_0\lambda^{-p/(2m)}x^{-p/m})}\}\mathrm{d}x\nonumber.
\end{align}
Putting all pieces above together, we obtain the following bound:
\begin{align*}
&\left\|\sup_{g,h\in \mathcal W^m_2(\mathcal X, 1), \,\|g - h\|_{L_\infty}\leq\varepsilon}\|Z_{n\lambda}(g) - Z_{n\lambda}(h)\|_\lambda\right\|_{\psi_2}\nonumber\\
&\quad\lesssim \frac{\lambda^{-p/(4m)}}{\sqrt{n}}\bigg[ \bigg.\frac{\varepsilon}{2^j} + \int_0^\varepsilon\sqrt{\log\{1 + \exp(2c_0\lambda^{-p/(2m)}x^{-p/m})\}}\mathrm{d}x \\
&\quad\quad+ \varepsilon\sqrt{\log\{1 + \exp(2c_0\lambda^{-p/(2m)}\varepsilon^{-p/m})\}}\bigg. \bigg]\nonumber.
\end{align*}
By taking $j\to\infty$, we can let the first term in the squared bracket tend to $0$, and hence,
\begin{align*}
&\left\|\sup_{g,h\in  \mathcal W^m_2(\mathcal X, 1),\, \|g - h\|_{L_\infty}\leq\varepsilon}\|Z_{n\lambda}(g) - Z_{n\lambda}(h)\|_\lambda\right\|_{\psi_2}\nonumber\\
&\quad\lesssim \frac{\lambda^{-p/(4m)}}{\sqrt{n}}\bigg[ \bigg. \int_0^\varepsilon\sqrt{\log\{1 + \exp(2c_0\lambda^{-p/(2m)}x^{-p/m})\}}\mathrm{d}x \\
&\quad \quad + \varepsilon\sqrt{\log\{1 + \exp(2c_0\lambda^{-p/(2m)}\varepsilon^{-p/m})\}}\bigg. \bigg]\nonumber\\
&\quad\lesssim \frac{\lambda^{-p/(4m)}}{\sqrt{n}}\int_0^\varepsilon \sqrt{\log\{1 + \exp(2c_0\lambda^{-p/(2m)}x^{-p/m})\}}\mathrm{d}x.
\end{align*}
Now we take $h = 0$, which implies $Z_{n\lambda}(h) = 0$ by the construction of $Z_{n\lambda}$. Furthermore, by the property of reproducing kernel $K_\lambda$ and the Cauchy-Schwarz inequality,
\begin{align*}
\|g - h\|_{L_\infty}&\leq \sup_{\bx\in\calX}|g(\bx)| = \sup_{\bx\in\calX}|\langle g(\cdot), K_\lambda(\bx,\cdot)\rangle_\lambda| \\
&\leq \sup_{\bx\in\calX}\|g\|_{\lambda}\sqrt{\langle K_\lambda(\bx,\cdot), K_\lambda(\bx, \cdot)\rangle_{\lambda}}\leq {C_K}\lambda^{-p/(4m)}.
\end{align*}
Taking $\varepsilon = {C_K}\lambda^{-p/(4m)}$, we obtain
\begin{align}
&\left\|\sup_{g\in \mathcal W^m_2(\mathcal X, 1)}\|Z_{n\lambda}(g)\|_{\lambda}\right\|_{\psi_2}\nonumber\\
&\quad\leq \left\|\sup_{g\in \mathcal W^m_2(\mathcal X, 1), \|g\|_{L_\infty}\leq \varepsilon}\|Z_{n\lambda}(g)\|_{\lambda}\right\|_{\psi_2}\nonumber\\
&\quad\leq \left\|\sup_{g, h\in \mathcal W^m_2(\mathcal X, 1), \|g - h\|_{L_\infty}\leq \varepsilon}\|Z_{n\lambda}(g) - Z_{n\lambda}(h)\|_{\lambda}\right\|_{\psi_2}\nonumber\\
&\quad\lesssim n^{-1/2}\lambda^{-p/(4m)}\int_0^{\epsilon}\sqrt{\log\{ 1 + \exp(2c_0\lambda^{-p/(2m)}x^{-p/m})}\mathrm{d}x\nonumber\\
&\quad\lesssim n^{-1/2}\lambda^{-p(6m - p)/(8m^2)}\nonumber.
\end{align}
Hence, invoking Lemma 8.1 in \cite{kosorok2008introduction}, we finally obtain
\begin{align}
\prob\left(
\sup_{g\in \mathcal W^m_2(\mathcal X, 1)}\left\|Z_{n\lambda}(g)\right\|_{\lambda}> t
\right)\leq 2\exp\left\{-\frac{nt^2}{\kappa_K^2\lambda^{-p(6m - p)/(4m^2)}}\right\}\nonumber,
\end{align}
for some absolute constant $\kappa_K$ depending on $K$ only, completing the proof.
\end{proof}

\section{Proof for Section \ref{subsec:theta_convergence}}
\label{proof:theta_convergence}

Denote $\hat {\bm \theta}_z:=\hat {\bm \theta}_{\lambda, \lambda_z, n}$, $\hat \delta_{z}(\cdot):=\hat \delta_{\lambda, \lambda_z,n}(\cdot)$ and $ \ell_z(\bm\theta,\delta) := \ell_{\lambda, \lambda_z, n}(\bm\theta,\delta) $  in (\ref{equ:KRR_sgp}). 

We need the following Corollary~\ref{corollary:prediction_sup} and Lemma~\ref{lemma:sqrt_conv} to prove theorem~\ref{thm:L_2_convergence}. Corollary~\ref{corollary:prediction_sup} is a direct consequence of Theorem \ref{thm:nonparametric}. We repeatedly use the fact that  for any $f(\cdot) \in L_2(\mathcal X)$
, there exists a constant $C_\rho$ such that $\|f\|_{L_2(\mathcal X)}\leq C_{\rho} \|f\|_{\mathcal H}$ in the following proof. 

\begin{corollary}
Denote $\hat \delta_{z, \bm \theta}=\argmin_{\delta \in \mathcal H}\ell_z(\bm \theta, \delta) $ for each $\bm \theta \in \bm \Theta$. 
Under the Assumptions A1 to A6, for sufficiently large $n$ and any $\alpha>2$ and $C_{\beta} \in (0,1)$, with probability at least $1-\exp\{-(\alpha-2)/3\}-\exp\{-n^{C_{\beta}\beta}\}$,  one has
\begin{align*}
&\sup\limits_{\bm \theta \in \bm \Theta} \|  \hat \delta_{z, \bm \theta}(\cdot) - (y^R(\cdot)- f^M(\cdot, \bm \theta ) )\|_{L_2(\mathcal X)} \\
&\quad \leq 2 \bigg[ \bigg. \sqrt{2}\bigg( \bigg. \sup\limits_{\bm \theta \in \bm \Theta}\|y^R(\cdot)- f^M(\cdot,  {\bm \theta})\|_{L_2(\mathcal X)}   \\
& \quad\quad \quad +\sup\limits_{\bm \theta \in \bm \Theta}\|y^R(\cdot)- f^M(\cdot,  {\bm \theta})\|_{\mathcal H} \bigg) \bigg.+C_K\sigma_0\alpha^{1/2}  \bigg. \bigg] n^{-\frac{m}{2m+p}},
\end{align*}
and
\begin{align*}
\sup_{{\bm{\theta}}\in\bm\Theta}\|  \hat \delta_{z, \bm \theta}(\cdot)\| _\calH
&\leq (2\sqrt{2} + 1)\sup_{{\bm{\theta}}\in\bm\Theta} \| (y^R(\cdot)- f^M(\cdot, \bm \theta ) )\|_{\calH}\\
&\quad + 2\sqrt{2} \sup\limits_{\bm \theta \in \bm \Theta}\|y^R(\cdot)- f^M(\cdot,  {\bm \theta})\|_{L_2(\mathcal X)}   
+ 2\sqrt{2}C_K\sigma_0\alpha^{1/2} 
 \end{align*}
 by choosing $\lambda=n^{-2m/(2 m+ p )}$ and  $\lambda_z =\lambda^{-1/2}$, where $C_K$ is a constant depending on the kernel $K(\cdot, \cdot)$.  
 \label{corollary:prediction_sup}
\end{corollary}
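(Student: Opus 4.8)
The plan is to recognize that for each fixed $\btheta \in \bm\Theta$ the estimator $\hat\delta_{z,\btheta} = \argmin_{\delta\in\calH}\ell_z(\btheta,\delta)$ is precisely the S-GaSP penalized KRR estimator for the nonparametric regression model (\ref{equ:nonparametric}) in which the regression truth is $f_0(\cdot) = y^R(\cdot) - f^M(\cdot,\btheta)$ and the noise is the same $\epsilon_i$. Indeed, writing $y^F(\bx_i) - f^M(\bx_i,\btheta) = [y^R(\bx_i) - f^M(\bx_i,\btheta)] + \epsilon_i$ and recalling that $\|\delta\|_{\calH_z}^2 = \|\delta\|_{\calH}^2 + \lambda_z\|\delta\|_{L_2(\calX)}^2$, the loss $\ell_z(\btheta,\delta)$ coincides with the objective minimized in Theorem~\ref{thm:nonparametric} for this choice of $f_0$. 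Thus Theorem~\ref{thm:nonparametric} applies verbatim, for each $\btheta$ separately, giving both an $L_2(\calX)$ bound and an $\calH$ bound on $\hat\delta_{z,\btheta} - (y^R - f^M(\cdot,\btheta))$ with $f_0 = y^R - f^M(\cdot,\btheta)$.

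The key step, and the only point requiring care, is to upgrade these pointwise-in-$\btheta$ bounds to a bound holding \emph{uniformly} over $\btheta\in\bm\Theta$ on a single high-probability event. The crucial observation is that the randomness in the proof of Theorem~\ref{thm:nonparametric} is captured entirely by the two events $A_n(t)$ and $B_n$, neither of which depends on the regression truth $f_0$: the event $A_n(t)$ concerns the empirical-process fluctuation $\frac1n\sum g(\bx_i)K_\lambda(\bx_i,\cdot) - \expect_\bx\{g(\bx)K_\lambda(\bx,\cdot)\}$ uniformly over all $g$ in the unit ball of $\calH$ under $\|\cdot\|_\lambda$, so it is automatically uniform in $\btheta$, while $B_n$ controls only the noise average $\frac1n\sum\epsilon_i K_\lambda(\bx_i,\cdot)$. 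Since both events are $\btheta$-free and their intersection has probability at least $1 - \exp\{-(\alpha-2)/3\} - \exp(-n^{C_\beta\beta})$, the entire chain of deterministic inequalities in the proof of Theorem~\ref{thm:nonparametric} holds simultaneously for every $\btheta$ on $A_n(t)\cap B_n$. No union bound over a net of $\btheta$ is needed, which is exactly what lets the same exponentially small failure probability survive the supremum; this is where I expect the main (though modest) obstacle to lie, namely arguing carefully that the probabilistic content is genuinely decoupled from $\btheta$.

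With this in hand I would take $\sup_{\btheta}$ on both sides of the two bounds inherited from Theorem~\ref{thm:nonparametric}. For the $L_2(\calX)$ statement this reproduces the first display directly, after noting that Assumption A5 together with the eigenvalue decay A6 (which embeds $\calH$ into $\calW_2^m(\calX)$ and makes $y^R - f^M(\cdot,\btheta)$ a legitimate element of $\calH$) guarantees $\sup_{\btheta}\|y^R - f^M(\cdot,\btheta)\|_{\calH} < \infty$, so the right-hand side is finite. For the $\calH$ statement I would add and subtract $y^R - f^M(\cdot,\btheta)$ and apply the triangle inequality $\|\hat\delta_{z,\btheta}\|_{\calH} \leq \|\hat\delta_{z,\btheta} - (y^R - f^M(\cdot,\btheta))\|_{\calH} + \|y^R - f^M(\cdot,\btheta)\|_{\calH}$, bounding the first term by the $\calH$ estimate of Theorem~\ref{thm:nonparametric} (which is $O(1)$ because the $\lambda^{-1/2}$ blow-up of the $\calH$ norm exactly cancels the $n^{-m/(2m+p)}$ rate when $\lambda = n^{-2m/(2m+p)}$) and the second by the supremum in A5. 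Collecting the coefficients of $\sup_\btheta\|y^R - f^M(\cdot,\btheta)\|_{\calH}$, $\sup_\btheta\|y^R - f^M(\cdot,\btheta)\|_{L_2(\calX)}$, and $C_K\sigma_0\alpha^{1/2}$ then yields the stated $\calH$ bound, completing the proof.
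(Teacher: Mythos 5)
Your proof is correct and matches the paper's intended argument: the paper justifies Corollary~\ref{corollary:prediction_sup} only as a ``direct consequence of Theorem~\ref{thm:nonparametric},'' and your key observation --- that for each fixed $\btheta$ the objective $\ell_z(\btheta,\delta)$ is exactly the loss of Theorem~\ref{thm:nonparametric} with $f_0 = y^R - f^M(\cdot,\btheta)$, and that the events $A_n(t)$ and $B_n$ carrying all the randomness are free of $f_0$, so the deterministic chain of inequalities holds simultaneously for every $\btheta$ on the same event --- is precisely the justification needed for the uniformity of the bound. Your triangle-inequality derivation of the $\calH$ bound even yields the marginally sharper constant $2C_K\sigma_0\alpha^{1/2}$ in place of $2\sqrt{2}\,C_K\sigma_0\alpha^{1/2}$, which of course implies the stated inequality.
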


\begin{lemma}
 Under assumptions A1 to A6,
\begin{itemize}
\item[(i)] it holds that
\begin{align*}
&\sup_{{\bm{\theta}}\in\bm\Theta}\left|\frac{1}{n}\sum^n_{i=1}(y^R(\mathbf x_i)- f^M(\mathbf x_i, \bm \theta)- \hat \delta_{z, \bm \theta}(\mathbf x_i) )^2\right.\\
&\qquad \left.
-\int_{\mathbf x \in \mathcal X} (y^R(\mathbf x)- f^M(\mathbf x, \bm \theta)- \hat \delta_{z, \bm \theta}(\mathbf x) )^2d \mathbf x\right|= o_{p}(n^{-1/2}),
\end{align*}
and
\begin{align*}
&\sup_{{\bm{\theta}}\in\bm\Theta}\left|\frac{1}{n}\sum^n_{i=1}(y^R(\mathbf x_i)- f^M(\mathbf x_i, \bm \theta)- \hat \delta_{z, \bm \theta}(\mathbf x_i) ){\epsilon}_i
\right|= o_{p}(n^{-1/2});
\end{align*}
\item[(ii)]  for any $j=1,...,q$, one has
\begin{align*}
&\frac{1}{n}\sum^n_{i=1}(y^R(\mathbf x_i)- f^M(\mathbf x_i, \hat {\bm \theta}_z)- \hat \delta_z(\mathbf x_i) ) \frac{\partial f^M( \mathbf x_i, \hat {\bm \theta}_z )}{\partial \theta_j}  \\
  &\quad \quad   = \int_{\mathbf x \in \mathcal X} (y^R(\mathbf x)- f^M(\mathbf x, \hat {\bm \theta}_z)-\hat \delta_z(\mathbf x) ) \frac{\partial f^M( \mathbf x, \hat {\bm \theta}_z )}{\partial \theta_j} d \mathbf x+o_{p}(n^{-1/2}).
\end{align*}
\end{itemize}
\label{lemma:sqrt_conv}
\end{lemma}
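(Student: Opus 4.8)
The plan is to recognize each of the three displayed differences as (a constant times $n^{-1/2}$ times) an empirical process indexed by a Donsker class, and then to upgrade the generic $O_p(n^{-1/2})$ control to the claimed $o_p(n^{-1/2})$ by exploiting that the residual $\hat r_{z,\btheta}(\cdot) := y^R(\cdot) - f^M(\cdot,\btheta) - \hat\delta_{z,\btheta}(\cdot)$ vanishes in $L_2(\mathcal X)$. Writing $\mathbb{G}_n f = n^{-1/2}\sum_{i=1}^n\{f(\bx_i) - \int_{\mathcal X} f(\bx)d\bx\}$ for the empirical process, the first statement of (i) and statement (ii) are exactly $\sup_\btheta|\mathbb{G}_n(\hat r_{z,\btheta}^2)| = o_p(1)$ and $|\mathbb{G}_n(\hat r_{z,\hat{\bm\theta}_z}\,\partial_{\theta_j}f^M(\cdot,\hat{\bm\theta}_z))| = o_p(1)$ after rescaling, so the whole task reduces to showing these $\mathbb{G}_n$ terms are $o_p(1)$.

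First I would assemble the Donsker classes. By Assumption A6 and the eigenvalue decay, the metric entropy of a fixed-radius RKHS ball $\{f:\|f\|_{\mathcal H}\leq M\}$ in $\|\cdot\|_{L_\infty}$ grows like $\varepsilon^{-p/m}$ (the bound used in the proof of Lemma~\ref{lemma:maximum_inequality}); since $m>p/2$ the entropy integral converges, so this ball is $P$-Donsker. Assumption A3 places each $\partial f^M(\cdot,\btheta)/\partial\theta_j$ in such a ball. Crucially, Corollary~\ref{corollary:prediction_sup} bounds $\sup_\btheta\|\hat\delta_{z,\btheta}\|_{\mathcal H}$ by a constant, and Assumption A5 bounds $\sup_\btheta\|y^R-f^M(\cdot,\btheta)\|_{\mathcal H}$, so on an event of probability tending to one every residual $\hat r_{z,\btheta}$ lies in one deterministic RKHS ball; the reproducing property and the Cauchy--Schwarz inequality together with $\sup_{\bx}K(\bx,\bx)<\infty$ then give a uniform $L_\infty$ bound on $\hat r_{z,\btheta}$. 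Because squares and products of uniformly bounded Donsker classes are again Donsker, the classes $\{\hat r_{z,\btheta}^2\}$ and $\{\hat r_{z,\btheta}\,\partial_{\theta_j}f^M(\cdot,\btheta)\}$ are Donsker.

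With the Donsker structure in place, the quantitative engine is again Corollary~\ref{corollary:prediction_sup}, which yields $\sup_\btheta\|\hat r_{z,\btheta}\|_{L_2(\mathcal X)}\to 0$. Combined with the uniform $L_\infty$ bound this forces $\sup_\btheta\|\hat r_{z,\btheta}^2\|_{L_2(\mathcal X)}\to 0$ and $\sup_\btheta\|\hat r_{z,\btheta}\,\partial_{\theta_j}f^M(\cdot,\btheta)\|_{L_2(\mathcal X)}\to 0$; that is, these index functions collapse to the origin in the $L_2(P)$ seminorm governing the empirical process. Asymptotic equicontinuity of a Donsker class then delivers $\sup_\btheta|\mathbb{G}_n(\hat r_{z,\btheta}^2)|=o_p(1)$ and the analogous bound for the $\partial_{\theta_j}f^M$ product, giving the first statement of (i) and statement (ii) (the latter by specializing the uniform bound to the random $\btheta=\hat{\bm\theta}_z$). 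For the noise cross-term in (i) I would instead use a multiplier empirical process argument: conditionally on $(\bx_i)_{i=1}^n$, the process $\btheta\mapsto n^{-1/2}\sum_i\epsilon_i\hat r_{z,\btheta}(\bx_i)$ is sub-Gaussian with conditional variance proportional to $n^{-1}\sum_i\hat r_{z,\btheta}^2(\bx_i)$, which the first statement of (i) shows converges uniformly to $\sup_\btheta\|\hat r_{z,\btheta}\|_{L_2(\mathcal X)}^2\to 0$; a chaining bound over the Donsker index class then yields $o_p(1)$.

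The main obstacle is the interplay between the randomness of $\hat\delta_{z,\btheta}$ and the Donsker machinery: the index function $\hat r_{z,\btheta}^2$ is not drawn from a fixed class but depends on the data through $\hat\delta_{z,\btheta}$. Handling this cleanly hinges on the uniform RKHS-norm control of Corollary~\ref{corollary:prediction_sup} to pin every residual inside a single deterministic Donsker ball with high probability. The accompanying subtlety, promoting $O_p(n^{-1/2})$ to $o_p(n^{-1/2})$, is resolved not by a faster rate but by the shrinkage $\|\hat r_{z,\btheta}\|_{L_2(\mathcal X)}\to 0$ feeding into asymptotic equicontinuity; verifying that squaring and multiplication by $\partial_{\theta_j}f^M$ preserve the Donsker property (as Lipschitz images of bounded Donsker classes) is the remaining technical point.
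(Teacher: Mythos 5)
Your proposal is correct and follows essentially the same route as the paper: build Donsker classes from a fixed RKHS ball (justified by Corollary~\ref{corollary:prediction_sup} and Assumptions A3--A5), note that squares and products of uniformly bounded Donsker classes remain Donsker, and then convert $\sup_{\btheta}\|\hat r_{z,\btheta}\|_{L_2(\calX)}\to 0$ into $o_p(1)$ control of the normalized empirical process via asymptotic equicontinuity (the paper cites Theorem 2.4 of Mammen--van de Geer for exactly this step). The only cosmetic difference is your conditional multiplier/chaining treatment of the noise cross-term, where the paper simply applies the same equicontinuity bound to the Donsker class $\{(y^R(\cdot)-f^M(\cdot,\btheta)-\delta(\cdot))\eps\}$.
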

\begin{proof}
Denote
\begin{equation*}
\mathcal W^{m}_2(\mathcal H, B) := \left\{
f(\cdot) = \sum^{\infty}_{j=1} f_j \phi(\cdot)\in L_2(\mathcal X): \sum^{\infty}_{j=1} j^{2m/p} f^2_j \leq B^2
\right\},
\label{equ:sobolev_W_B}
\end{equation*}
and
\begin{align*}
s^2_{i}( \bm \theta, \delta)&:=(y^R(\mathbf x_i)- f^M(\mathbf x_i, \bm \theta)-  \delta(\mathbf x_i))^2, \\
u_{i}( \bm \theta, \delta)&:=(y^R(\mathbf x_i)- f^M(\mathbf x_i, \bm \theta)-  \delta(\mathbf x_i)){\epsilon}_i, \\
r^2_i({\bm{\theta}}, \delta)&: = (y^R(\bx_i) - f^M(\bx_i, {\bm{\theta}}) - \delta(\bx_i))\frac{\partial f^M(\bx, {\bm{\theta}})}{\partial\theta_j}
\end{align*}
for $( \bm \theta, \delta) \in  \bm \Theta \times \mathcal W^{m}_2(\mathcal X,B) $ and some $B>0$ that will be specified later. Define the empirical processes
\begin{align*}
\bar s^2({\bm \theta},\delta)&:=\frac{1}{\sqrt{n}}\sum^n_{i=1} \{s^2_{i}(\delta, \bm \theta)- \E_{\mathbf x_i}[s^2_{i}(\delta, \bm \theta)] \}, \\
\bar u({\bm \theta},\delta)&:=\frac{1}{\sqrt{n}}\sum^n_{i=1} \{u_{i}(\delta, \bm \theta)- \E_{\mathbf x_i,{\epsilon}_i}[u_{i}(\delta, \bm \theta)] \}, \\
\bar r({\bm{\theta}},\delta)&: = \frac{1}{\sqrt{n}}\sum^n_{i=1} \{r_{i}(\delta, \bm \theta)- \E_{\mathbf x_i}[r_{i}(\delta, \bm \theta)] \},
\end{align*}
where 
\begin{align*}
\E_{\mathbf x_i}[s^2_{i}( {\bm \theta}, \delta)] &= \int_{\mathbf x \in \mathcal X} (y^R(\mathbf x)- f^M(\mathbf x, \bm \theta)- \delta(\mathbf x) )^2d \mathbf x,\\
\E_{\mathbf x_i, {\epsilon}_i}[u_{i}( {\bm \theta}, \delta)] &= \int_{\mathbf x \in \mathcal X} (y^R(\mathbf x)- f^M(\mathbf x, \bm \theta)- \delta(\mathbf x) ){\epsilon}_i d \mathbf x = 0,\\
\E_{\mathbf x_i}[r_{i}( {\bm \theta}, \delta)] &= \int_{\mathbf x \in \mathcal X} (y^R(\mathbf x)- f^M(\mathbf x, \bm \theta)- \delta(\mathbf x) )\frac{\partial f^M(\bx,{\bm{\theta}})}{\partial\theta_j}d \mathbf x.
\end{align*} 
By Assumptions A3 and A4, the function classes $\{\partial f^M(\cdot,{\bm{\theta}})/\partial\theta_j:{\bm{\theta}}\in\bm\Theta\}$ and $\mathcal F=\{y^R(\cdot)- f^M(\cdot, \bm \theta), \bm \theta \in \bm \Theta \}$ are Donsker. Note that, by definition, $\mathcal W^m_2(\mathcal X, B)$ is also Donsker. Since both $\mathcal W^m_2(\mathcal X, B)$ and $\mathcal F$ are uniformly bounded, 
the function classes 
\begin{align*}
&\{(y^R(\cdot) - f^M(\cdot,{\bm{\theta}}) - \delta(\cdot))^2:{\bm{\theta}}\in\bm\Theta, \delta\in \calW_2^m(\calX, B)\}, \quad \mbox{and}\\
&\left\{
(y^R(\cdot) - f^M(\cdot,{\bm{\theta}}) - \delta(\cdot))\frac{\partial f^M(\cdot,{\bm{\theta}})}{\partial\theta_j}:{\bm{\theta}}\in\bm\Theta, \delta\in\calW_2^m(\calX, B)
\right\}
\end{align*}
 are also Donsker classes. Furthermore, letting $f_{{\bm{\theta}},\delta}({\epsilon}, \bx) = (y^R(\bx) - f^M(\bx, {\bm{\theta}}) - \delta(\bx)){\epsilon}$, observe that for any $({\bm{\theta}}_1, \delta_1)$ and $({\bm{\theta}}_2, \delta_2)$, the distance 
\begin{align*}
&\left\{\E_0\left[(f_{{\bm{\theta}}_1,\delta_1} - f_{{\bm{\theta}}_2,\delta_2})^2\right]\right\}^{1/2}\\
&\quad= \sigma_0 \|f^M(\cdot, {\bm{\theta}}_1) - \delta_1(\cdot) - f^M(\cdot, {\bm{\theta}}_2) + \delta_2(\cdot)\|_{L_2(\calX)}\\
&\quad\leq \sigma_0\left[\|f^M(\cdot, {\bm{\theta}}_1) - f^M(\cdot, {\bm{\theta}}_2)\|_{L_2(\calX)} + 
\|\delta_1(\cdot ) - \delta_2(\cdot)\|_{L_2(\calX)}
\right]
\end{align*}
can be bounded by the $L_2(\calX)$-distance of functions in $\{f^M(\cdot, {\bm{\theta}}): {\bm{\theta}}\in \bm\Theta\}$ and $\delta(\cdot)\in\calW_2^m(\calX, B)$. 
In addition, by Assumption A4 $\{f^M(\cdot, {\bm{\theta}}):{\bm{\theta}}\in\bm\Theta\}$ and $\calW_2^m(\calX, B)$ are Donsker classes, it follows that the function class
 \[\left\{
  f_{{\bm{\theta}}, \delta}\in C(\mathbb{R}\times \calX):{\bm{\theta}}\in\bm\Theta, \delta\in\calW_2^m(\calX, B)
  \right\}\]
 is also Donsker, since its metric entropy can be upper bounded by those of $\{f^M(\cdot, {\bm{\theta}}):{\bm{\theta}}\in\bm\Theta\}$ and $\calW_2^m(\calX, B)$. 
By Theorem 2.4 in \cite{mammen1997penalized}, for any $t_1>0$ and any $B>0$, there exists $t_2, t_2', t_2''>0$ such that
\begin{align}
\label{equ:lim_sup_geer}
\limsup\limits_{n\to \infty}P\left(\sup\limits_{ \|\delta\|_{\mathcal H}\leq B,\, \bm \theta \in \bm \Theta, \, \| y^R(\cdot)-f^M(\cdot,\bm\theta)-\delta(\cdot) \|_{L_2(\mathcal X)}\leq t_2 } |\bar s^2( {\bm \theta},\delta) |>t_1  \right)<t_1,\\
\label{equ:lim_sup_geer_ii}
\limsup\limits_{n\to \infty}P\left(\sup\limits_{ \|\delta\|_{\mathcal H}\leq B,\, \bm \theta \in \bm \Theta, \, \| y^R(\cdot)-f^M(\cdot,\bm\theta)-\delta(\cdot) \|_{L_2(\mathcal X)}\leq t_2' } |\bar r( {\bm \theta},\delta) |>t_1  \right)<t_1,\\
\label{equ:lim_sup_geer_iii}
\limsup\limits_{n\to \infty}P\left(\sup\limits_{ \|\delta\|_{\mathcal H}\leq B,\, \bm \theta \in \bm \Theta, \, \| y^R(\cdot)-f^M(\cdot,\bm\theta)-\delta(\cdot) \|_{L_2(\mathcal X)}\leq t_2'' } |\bar u( {\bm \theta},\delta) |>t_1  \right)<t_1.
\end{align}
Note that by Corollary \ref{corollary:prediction_sup}, $\sup_{{\bm{\theta}}\in\bm\Theta}\|\hat\delta_{z,{\bm{\theta}}}\|_\calH$ is asymptotically tight, and therefore for any $\varepsilon>0$, there exists $B_0>0$ and some integer $N\in\mathbb{N}_+$, both depending on ${\epsilon}$, such that $P(\sup_{{\bm{\theta}}\in\bm\Theta}\|\hat \delta_{z, \bm \theta}\|_{\mathcal H}>B_0)\leq \varepsilon/3$ for all $n > N$. Now take $B = B_0$, $t_1 = {\epsilon}/3$. Then
we can choose $t_2$ to be a value that satisfies (\ref{equ:lim_sup_geer}), $t_2'$ satisfying \eqref{equ:lim_sup_geer_ii}, and $t_2''$ satisfying \eqref{equ:lim_sup_geer_iii}. 
By Corollary \ref{corollary:prediction_sup} and Assumption A5,
$\sup_{\bm{\theta}}\|\hat\delta_{z,{\bm{\theta}}}(\cdot) - (y^R(\cdot) - f^M(\cdot,{\bm{\theta}}))\|_{L_2(\calX)} = O_P(n^{-2m/(2m + p)})$, and hence
there exists $t_3 > 0$, depending on ${\epsilon}$ and $n$, such that for all $n > N$, it holds that
\[
P\left(\sup_{\bm \theta \in \bm \Theta} \| y^R(\cdot)-f^M(\cdot,\bm\theta)-\hat \delta_{z, \bm \theta}(\cdot) \|_{L_2(\mathcal X)}\geq t_3\right)<\varepsilon/3. 
\]
Without loss of generality, we may require $t_3 \leq \min\{t_2, t_2', t_2''\}$ by taking sufficiently large $n$. 
Then for sufficiently large $n$, we obtain 
\begin{align*}
&P\left(\sup_{{\bm{\theta}}\in\bm\Theta}|\bar s^2(\bm \theta, \hat \delta_{z, \bm \theta} ) |>\varepsilon \right)\\
&\quad\leq P\left(\sup\limits_{\sup_{\bm{\theta}}\|\hat \delta_{z,\bm\theta}\|_{\mathcal H}\leq B_{0},\, \bm \theta \in \bm \Theta,\, \| y^R(\cdot)-f^M(\cdot,\bm\theta)-\hat \delta_{z, \bm \theta}(\cdot) \|_{L_2(\mathcal X)}\leq t_2} |\bar s^2( \bm \theta, \hat \delta_{z, \bm \theta}) |>t_1  \right) \\
\nonumber &\quad \quad + P\left( \sup\limits_{\bm \theta \in \bm \Theta} \| y^R(\cdot)-f^M(\cdot,\bm\theta)-\hat \delta_{z, \bm \theta}(\cdot) \|_{L_2(\mathcal X)}> t_2\right) \\
&\quad\quad + P\left(\sup_{{\bm{\theta}}\in\bm\Theta}\|\hat \delta_{z, \bm \theta}\|_{\mathcal H}>B_0\right)\\
\nonumber &\quad<\varepsilon/3+\varepsilon/3+\varepsilon/3 
=\varepsilon,
\end{align*}
and similarly, 
\[
P\left(\sup_{{\bm{\theta}}\in\bm\Theta}|\bar r(\bm \theta, \hat \delta_{z, \bm \theta} ) |>\varepsilon \right) < {\epsilon}\quad\text{and}\quad
P\left(\sup_{{\bm{\theta}}\in\bm\Theta}|\bar u(\bm \theta, \hat \delta_{z, \bm \theta} ) |>\varepsilon \right) < {\epsilon} .
\]
Therefore, 
\begin{align*}
&\frac{1}{\sqrt{n}}\sup_{{\bm{\theta}}\in\bm\Theta}|\bar s({\bm{\theta}}, \hat\delta_{z,{\bm{\theta}}})|\\
&\quad =\sup_{{\bm{\theta}}\in\bm\Theta}\left|\frac{1}{n}\sum^n_{i=1}(y^R(\mathbf x_i)- f^M(\mathbf x_i, \bm \theta)- \hat \delta_{z, \bm \theta}(\mathbf x_i) )^2\right.\\
&\qquad\qquad \left.
-\int_{\mathbf x \in \mathcal X} (y^R(\mathbf x)- f^M(\mathbf x, \bm \theta)- \hat \delta_{z, \bm \theta}(\mathbf x) )^2d \mathbf x\right|= o_{p}(n^{-1/2}),
\end{align*}
 and 
\begin{align*}
&\frac{1}{\sqrt{n}}\sup_{{\bm{\theta}}\in\bm\Theta}|\bar u({\bm{\theta}}, \hat\delta_{z,{\bm{\theta}}})|
=\sup_{{\bm{\theta}}\in\bm\Theta}\left|\frac{1}{n}\sum^n_{i=1}(y^R(\mathbf x_i)- f^M(\mathbf x_i, \bm \theta)- \hat \delta_{z, \bm \theta}(\mathbf x_i) ){\epsilon}_i\right|= o_{p}(n^{-1/2}),
\end{align*}
completing the proof of (i). The proof of (ii) can be completed by observing that
\begin{align*}
&\left|
\frac{1}{n}\sum_{i = 1}^n(y^R(\bx_i) - f^M(\bx_i, \hat{\bm{\theta}}_z) - \hat\delta_z(\bx_i))\frac{\partial f^M(\bx_i, \hat{\bm{\theta}}_z)}{\partial\theta_j}\right.\\
&\quad\left. - \int_{\bx\in\calX}(y^R(\bx) - f^M(\bx,\hat{\bm{\theta}}_z) - \hat\delta_z(\bx))\frac{\partial f^M(\bx, \hat{\bm{\theta}}_z)}{\partial \theta_j}d\bx
\right|\nonumber\\
&\quad=
\frac{1}{\sqrt{n}}|\bar r(\hat{\bm{\theta}}_z, \hat\delta_{z,\hat{\bm{\theta}}_z})| 
\leq \frac{1}{\sqrt{n}}\sup_{{\bm{\theta}}\in\bm\Theta}|\bar r({\bm{\theta}}, \hat\delta_{z,{\bm{\theta}}})| = o_p(n^{-1/2}).
\end{align*}
\end{proof}

\begin{proof}[Proof for Theorem~\ref{thm:L_2_convergence}]
Without loss of generality, it suffices to prove the case when $\lambda_z=\lambda^{-1/2}$. For the general case when $\lambda_z=O(\lambda^{-1/2})$, the proof follows similarly. 
We first show $\hat{\bm\theta}_{z} \to^p \bm \theta_{L_2}$. By the definition of $\hat{\bm\theta}_{z} $, $\bm \theta_{L_2}$, and the theory of M-estimators (see, Theorem 5.7 in \cite{van2000asymptotic}), it suffices to show that $\lambda^{-1/2}(\ell_z( \bm \theta,\hat \delta_{z, \bm \theta})-\sigma^2_0)  \to^p  \| y^R(\cdot)- f^M(\cdot, \bm \theta)\|^2_{L_2(\mathcal X)}$ uniformly for each $\bm \theta \in \bm \Theta$. 
Note that
\begin{align*}
& \ell_z(\hat \delta_{z, \bm \theta}(\cdot), \bm \theta)\\
&\quad=\frac{1}{n}\sum^n_{i=1}(y^R(\mathbf x_i)- f^M(\mathbf x_i, \bm \theta) -\hat \delta_{z, \bm \theta}(\mathbf x_i) )^2 +\frac{1}{n}\sum^n_{i=1} {\epsilon}ilon^2_i  \\
&\quad\quad +\frac{2}{n}\sum^n_{i=1}(y^R(\mathbf x_i)- f^M(\mathbf x_i, \bm \theta)-\hat \delta_{z, \bm \theta}(\mathbf x_i)  ) {\epsilon}_i +\lambda \| \hat \delta_{z, \bm \theta}\|_{\mathcal H}^2 + \sqrt{\lambda}\| \hat \delta_{z, \bm \theta} \|_{L_2(\mathcal X)}^2 \\
&\quad:=A_n+B_n+C_n+D_n+E_n.
\label{equ:l_z_5_terms}
\end{align*}
For $A_n$, by Lemma \ref{lemma:sqrt_conv} (i) and Corollary \eqref{corollary:prediction_sup}, one has 
\begin{equation}
\label{equ:l_z_term_1}
\sup_{{\bm{\theta}}\in\bm\Theta}\left|\frac{1}{n}\sum^n_{i=1}(y^R(\mathbf x_i)- f^M(\mathbf x_i, \bm \theta) -\hat \delta_{z, \bm \theta}(\mathbf x_i) )^2\right| = o_p(n^{-1/2})
\end{equation}
Since $\E[B_n]=\sigma^2_0$ and ${\V}[B_n]=O(n^{-1})$, Chebyshev's inequality implies $(1/n)\sum^n_{i=1} {\epsilon}ilon^2_i= \sigma^2_0 + O_p(n^{-1/2})$ for $B_n$. 
For $C_n$, 
Lemma \ref{lemma:sqrt_conv} (i) guarantees that 
\[
\sup_{{\bm{\theta}}\in\bm\Theta}\frac{2}{n}\sum^n_{i=1}(y^R(\mathbf x_i)- f^M(\mathbf x_i, \bm \theta)-\hat \delta_{z, \bm \theta}(\mathbf x_i)  ) {\epsilon}_i=o_p(n^{-1/2})
\]
Since $\lambda=O(n^{-2m/(2m+p)})$, by the asymptotic tightness of $\sup_{{\bm{\theta}}}\|\hat\delta_{z,{\bm{\theta}}}\|_\calH$ (Corollary \ref{corollary:prediction_sup}), one has $\sup_{{\bm{\theta}}\in\bm\Theta}\lambda \| \hat \delta_{z, \bm \theta}\|_{\mathcal H}^2 = o_p(n^{-1/2})$. 
By putting the above all pieces together, we obtain
\begin{equation}
\sup_{{\bm{\theta}}\in\bm\Theta}\left|\lambda^{-1/2}(\ell_z(\hat \delta_{z, \bm \theta}(\cdot), \bm \theta)- \sigma^2_0) - \| \hat \delta_{z, \bm \theta}  \|^2_{L_2(\mathcal X)}\right| =  O_p( (\lambda n)^{-1/2} ).
\label{equ:ell_z}
\end{equation}
For any $\bm \theta$, by the Cauchy-Schwarz inequality, one has
\begin{align*}
\nonumber & \left|\| \hat \delta_{z, \bm \theta}  \|^2_{L_2(\mathcal X)}- \| y^R(\cdot)- f^M(\cdot, \bm \theta)\|^2_{L_2(\mathcal X)}\right| \\
 \nonumber &\quad\leq \| (\hat \delta_{z, \bm \theta}(\cdot)- (y^R(\cdot)- f^M(\cdot, \bm \theta)) \|_{L_2(\mathcal X)}  \| \hat \delta_{z, \bm \theta}(\cdot)+ y^R(\cdot)- f^M(\cdot, \bm \theta)\|_{L_2(\mathcal X)} 
\label{equ:L_2_minus}
\end{align*}
Recall that 
\[
\sup_{{\bm{\theta}}\in\bm\Theta}\| (\hat \delta_{z, \bm \theta}(\cdot)- (y^R(\cdot)- f^M(\cdot, \bm \theta)) \|_{L_2(\mathcal X)} =O_p(n^{-m/(2m+d)}) \]
by Corollary \ref{corollary:prediction_sup} and Assumption A4. Using Assumptions A4 and the asymptotic tightness of $\sup_{\bm{\theta}}\|\hat\delta_{z,{\bm{\theta}}}\|_\calH$ (Corollary \ref{corollary:prediction_sup}), one has 
\begin{align*}
&\| \hat \delta_{z, \bm \theta}(\cdot)+ y^R(\cdot)- f^M(\cdot, \bm \theta)\|_{L_2(\mathcal X)} \nonumber\\
&\quad\leq \| \hat \delta_{z, \bm \theta}(\cdot)\|_{L_2(\mathcal X)}+ \sup_{\bm \theta \in \bm \Theta}  \|y^R(\cdot)- f^M(\cdot, \bm \theta)\|_{L_2(\mathcal X)}\nonumber\\
&\quad\leq C_{\rho} \| \hat \delta_{z, \bm \theta}(\cdot)\|_{\mathcal H}+ \sup_{\bm \theta \in \bm \Theta} \|y^R(\cdot)- f^M(\cdot, \bm \theta)\|_{\mathcal H}=O_p(1).
\end{align*}
Thus 
\[
\sup_{{\bm{\theta}}\in\bm\Theta}
\left|\| \hat \delta_{z, \bm \theta}  \|^2_{L_2(\mathcal X)}- \| y^R(\cdot)- f^M(\cdot, \bm \theta)\|^2_{L_2(\mathcal X)}\right| = O_p(n^{-m/(2m + d)}),
\]
and hence,
\[  
\sup_{{\bm{\theta}}\in\bm\Theta}\left|\lambda^{-1/2}(\ell_z( \bm \theta, \hat \delta_{z, \bm \theta})- \sigma^2_0) - \| y^R(\cdot)- f^M(\cdot, \bm \theta)\|^2_{L_2(\mathcal X)}\right| = o_p(1), 
\] 
from which we conclude $\hat {\bm \theta}_z \to^p \bm \theta_{L_2} $. 


Next we derive the convergence rate of $\hat{\bm{\theta}}_z$. 
Apply the Fr{\'e}chet derivative on ${\ell}_{z}$ with regard to $\delta(\cdot)$ and the partial derivative on ${\ell}_{z}$ with regard to $\theta_j$, $j=1,...,q$. For any $g(\cdot) \in \mathcal H$,  $\hat \delta_z$  and $\hat {\bm \theta}_{z}$ satisfy
\begin{align}
\label{equ:frechet_dev_ell}
0&=-\frac{2}{n} \sum^n_{i=1}(y^F_i -f(\mathbf x_i,  \hat{\bm\theta}_z) -\hat \delta_z(\mathbf x_i) )g(\mathbf x_i)  + 2{\lambda}  \langle \hat \delta_z(\cdot), g(\cdot) \rangle_{\mathcal H}\nonumber\\
&\quad+2\sqrt{\lambda}  \langle \hat \delta_z(\cdot), g(\cdot) \rangle_{L_2(\mathcal X)}, \\
\label{equ:dev_theta}
0&=-\frac{2}{n} \sum^n_{i=1}(y^F_i -f(\mathbf x_i, \hat{\bm\theta}_z) -\hat \delta_z(\mathbf x_i) )  \frac{\partial f^M(\mathbf x_i,  \hat {\bm \theta}_z)}{\partial \theta_j}. 
\end{align}
Choosing $g(\cdot) =\frac{\partial f^M(\cdot, \bm {\hat \theta}_z)}{\partial\theta_j}$ and plugging (\ref{equ:dev_theta}) into (\ref{equ:frechet_dev_ell}), one has 
\begin{equation}
\sqrt{\lambda}  \left\langle \hat \delta_z(\cdot),\frac{\partial f^M(\cdot, \hat{\bm\theta}_z )}{\partial \theta_j}\right\rangle_{\mathcal H}+ \left\langle \hat \delta_z(\cdot), \frac{\partial f^M(\cdot, \hat{\bm\theta}_z)}{\partial\theta_j} \right\rangle_{L_2(\mathcal X)}=0.
\label{equ:partial_f_M_theta}
\end{equation}   
Substituting (\ref{equ:dev_theta}) into (\ref{equ:partial_f_M_theta}) and by Lemma~\ref{lemma:sqrt_conv} (ii), we have 
\begin{align*}
0&=-\frac{1}{n} \sum^n_{i=1}(y^F_i -f(\mathbf x_i,  \hat{\bm\theta}_z) -\hat \delta_z(\mathbf x_i) )  \frac{\partial f^M(\mathbf x_i, \hat{\bm\theta}_{z})}{\partial \theta_j}  \\
&= -\int(y^R(\mathbf x)-f^M(\mathbf x, \bm{\hat \theta}_z)) \frac{\partial f^M(\mathbf x, \hat{\bm\theta}_{z})}{\theta_j} d \mathbf x + \left\langle \hat \delta_z(\cdot), \frac{\partial f^M(\cdot, \hat{\bm\theta}_z)}{\partial \theta_j} \right\rangle_{L_2(\mathcal X)}\\
& \quad -\frac{1}{n}\sum^n_{i=1}{\epsilon}_i \frac{\partial f^M(\mathbf x_i, \hat{\bm\theta}_{z})}{\partial \theta_j}  +o_p(n^{-1/2}) \\
&=\int  \frac{ \partial (y^R(\mathbf x)-f^M(\mathbf x, \hat{\bm\theta}_z ) )^2}{\partial \theta_j}   d \mathbf x -\sqrt{\lambda}  \left\langle \hat \delta_z(\cdot),\frac{\partial f^M(\cdot, \hat{\bm\theta}_z)}{\partial \theta_j}\right\rangle_{\mathcal H}\\
&\quad -\frac{1}{n}\sum^n_{i=1}{\epsilon}_i \frac{\partial f^M(\mathbf x_i, \hat{\bm\theta}_{z})}{\theta_j}  +o_p(n^{-1/2}).
\end{align*}
Applying Taylor expansion to the first term on the right-hand side at $\bm \theta_{L_2}$, for any $j=1,...,q$, we obtain
\begin{align}
\nonumber&\left\{ \int  \frac{ \partial^2 (y^R(\mathbf x)-f^M(\mathbf x, \tilde{\bm\theta}_z ) )^2}{\partial\theta_j \partial\bm\theta} d \mathbf x \right\}^T (\hat{\bm\theta}_z-\bm \theta_{L_2}) \\
\nonumber&\quad=\left\{ \int  \frac{ \partial^2 (y^R(\mathbf x)-f^M(\mathbf x, {\bm\theta}_{L_2} ) )^2}{\partial\theta_j \partial\bm\theta} d \mathbf x + o_p(1)\right\}^T (\hat{\bm\theta}_z-\bm \theta_{L_2}) \\
&\quad=\sqrt{\lambda}  \left\langle \hat \delta_z(\cdot),\frac{\partial f^M(\cdot, \hat{\bm\theta}_z )}{\partial \theta_j}\right\rangle_{\mathcal H}+\frac{1}{n}\sum^n_{i=1}{\epsilon}_i \frac{\partial f^M(\mathbf x_i, \hat{\bm\theta}_{z})}{\partial \theta_j} + o_p(n^{-1/2}),
\label{equ:two_derivatives}
\end{align}
where $\tilde{\bm\theta}_z$ lies within the $q$ dimensional rectangle between $\bm \theta_{L_2}$ and $\hat{\bm\theta}_z $. 
Observe that Corollary \ref{corollary:prediction_sup} and assumption A3 imply
\begin{align*}
\left|\left\langle
\hat\delta_z, \frac{\partial f^M(\cdot, \hat{\bm{\theta}}_z)}{\partial\theta_j}
\right\rangle_\calH\right|\leq \|\hat\delta_z\|_\calH\left\|\frac{\partial f^M(\cdot, {\bm{\theta}}_{L_2})}{\partial\theta_j} + o_p(1)\right\|_\calH = O_p(1).
\end{align*}
Now we consider the second term. Define the empirical process
\begin{align*}
G_n({\bm{\theta}}) 
& = \frac{1}{\sqrt{n}}\sum_{i = 1}^n\left[
{\epsilon}_i\frac{\partial f^M(\bx_i, {\bm{\theta}})}{\partial\theta_j} - 
{\epsilon}_i\frac{\partial f^M(\bx_i, {\bm{\theta}}_{L_2})}{\partial\theta_j}
\right].
\end{align*}
and denote
\[
f_{\bm{\theta}}({\epsilon}, \bx) = {\epsilon}\frac{\partial f^M(\bx, {\bm{\theta}})}{\partial\theta_j} - 
{\epsilon}\frac{\partial f^M(\bx, {\bm{\theta}}_{L_2})}{\partial\theta_j}.
\]
Since
\begin{align*}
\E_{{\epsilon}ilon,\mathbf x}\left\{[f_{{\bm{\theta}}_1}({\epsilon},\bx) - f_{{\bm{\theta}}_2}({\epsilon},\bx)]^2\right\} 
&= 
\E_{{\epsilon}ilon,\mathbf x}\left[{\epsilon}^2\left(
\frac{\partial f^M(\bx,{\bm{\theta}}_1)}{\partial\theta_j} - 
\frac{\partial f^M(\bx,{\bm{\theta}}_2)}{\partial\theta_j}
\right)^2\right]\\
&=\sigma_0^2\left\|\frac{\partial f^M(\bx,{\bm{\theta}}_1)}{\partial\theta_j} - 
\frac{\partial f^M(\bx,{\bm{\theta}}_2)}{\partial\theta_j}\right\|_{L_2(\calX)}^2,
\end{align*}
therefore the function class
$
\left\{f_{\bm{\theta}}({\epsilon}, \bx)\in C(\mathbb{R}\times\calX):{\bm{\theta}}\in\bm\Theta\right\}
$
is Donsker by Assumption A3, and hence, $G_n({\bm{\theta}})$ converges weakly to a tight Gaussian stochastic process, denoted by $G(\cdot)$. 
W.l.o.g., we may take $G(\cdot)$ a version that has uniformly continuous sample paths (see Chapter 6 in \cite{van2000empirical}). 
Since $G_n({\bm{\theta}}_{L_2}) = 0$ for all $n$, it follows that $G({\bm{\theta}}_{L_2}) = 0$. By the consistency of $\hat{\bm{\theta}}_z$ and the continuous mapping theorem \citep{van2000asymptotic}, $G_n(\hat{\bm{\theta}}_z) = G({\bm{\theta}}_{L_2}) + o_p(1) = o_p(1)$. Therefore, 
\[
\frac{1}{n}\sum_{i = 1}^n{\epsilon}_i\frac{\partial f^M(\bx_i, \hat{\bm{\theta}}_z)}{\partial\theta_j} = \frac{1}{\sqrt{n}}G_n(\hat {\bm \theta}_{z}) + \frac{1}{n}\sum_{i = 1}^n{\epsilon}_i\frac{\partial f^M(\bx_i, {\bm{\theta}}_{L_2})}{\partial\theta_j} = O_p(n^{-1/2})
\]
To sum up,
\begin{align*}
&\left\{ \int  \frac{ \partial^2 (y^R(\mathbf x)-f^M(\mathbf x, {\bm\theta}_{L_2} ) )^2}{\partial{\bm{\theta}}\partial{\bm{\theta}}^T} d \mathbf x + o_p(1)\right\}
(\hat{\bm{\theta}}_z - {\bm{\theta}}_{L_2})\nonumber\\
&\quad = O_p(n^{-m/(2m + p)}) + O_p(n^{-1/2}) + o_p(n^{-1/2}) = O_p(n^{-m/(2m + p)}),
\end{align*}
completing the proof.
\end{proof}

\section{Proof and additional results for Section \ref{sec:discretized_sgasp}}

The identities in the Lemma~\ref{lemma:identities_r_z} are used repeatedly in the proof of the Theorem~\ref{thm:pred_SGP} and Lemma~\ref{lemma:equivalence_mean_GP}.
\begin{lemma}
\label{lemma:identities_r_z}
Denote $\sigma^2 \mathbf R_{z_d}$ the covariance matrix of $(\delta_{z_d}(\mathbf x_1),...,\delta_{z_d}(\mathbf x_n))^T$, where the $(i, \, j)$ entry being $\sigma^2 K_{z_d}(\mathbf x_i, \mathbf x_j)$ defined in (\ref{equ:sigma_2_K_z_d}). Denote $r_{z_d}(\mathbf x)=(K_{z_d}(\mathbf x, \mathbf x_1), ..., K_{z_d}(\mathbf x, \mathbf x_n))^T$ for any $\mathbf x \in \mathcal X$. One has the following identities
\begin{align}
\label{equ:R_D_z_inv}
  \mathbf R_{z_d}^{-1}&=\mathbf R^{-1}+\frac{\lambda_z}{n}\mathbf I_n, \\
 \label{equ:r_D_z}
 \mathbf r^T_{z_d}(\mathbf x)&= \frac{n}{\lambda_z}\mathbf r^T(\mathbf x) \mathbf {\tilde R}^{-1}=\mathbf r^T(\mathbf x) \mathbf R^{-1} \mathbf R_{z_d},
\end{align}
for any $\mathbf x \in \mathcal X$.
\begin{proof}
By the definitions of $\mathbf R_{z_d}$ and the Woodbury Identity, one has 
\begin{align*}
\mathbf R_{z_d}&=\mathbf R- \mathbf R \mathbf {\tilde R}^{-1} \mathbf R= \mathbf R \left(\mathbf I_n -\left(\mathbf I_n+ \frac{\lambda_z}{n}\mathbf R^{-1}\right)^{-1}\right) \\
&= \mathbf R \left(\frac{\lambda_z}{n} \mathbf R +\mathbf I_n \right)^{-1} = \left(\mathbf R^{-1} + \frac{\lambda_z}{n}\mathbf I_n \right)^{-1},
\end{align*}
from which (\ref{equ:R_D_z_inv}) follows. 

Equation (\ref{equ:r_D_z}) can be shown similarly by noting $\mathbf r^T_{z_d}(\mathbf x)= \mathbf r^T(\mathbf x) -\mathbf r^T(\mathbf x) {\mathbf {\tilde R} }^{-1} \mathbf R$ and the Woodbury Identity.
\end{proof}
\end{lemma}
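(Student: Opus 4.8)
The plan is to reduce both identities to elementary matrix algebra by first turning the kernel definition (\ref{equ:sigma_2_K_z_d}) into a matrix equation. Evaluating $K_{z_d}$ at the design points, and noting that $\mathbf r(\mathbf x_i)$ is precisely the $i$th column of $\mathbf R$, I would record that
\[
\mathbf R_{z_d}=\mathbf R-\mathbf R\,\tilde{\mathbf R}^{-1}\mathbf R,\qquad \tilde{\mathbf R}=\mathbf R+\frac{n}{\lambda_z}\mathbf I_n .
\]
Everything then follows from manipulating this single expression, so the only genuine hypothesis I need is that $\mathbf R$ (and hence $\tilde{\mathbf R}$) be invertible, which holds for a strictly positive-definite kernel evaluated at distinct inputs.

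For the first identity, I would recognize the right-hand side as a Woodbury form. Applying the Woodbury matrix identity in the variant $(\mathbf A^{-1}+\mathbf D)^{-1}=\mathbf A-\mathbf A(\mathbf A+\mathbf D^{-1})^{-1}\mathbf A$ with $\mathbf A=\mathbf R$ and $\mathbf D=\tfrac{\lambda_z}{n}\mathbf I_n$ gives
\[
\Big(\mathbf R^{-1}+\tfrac{\lambda_z}{n}\mathbf I_n\Big)^{-1}
=\mathbf R-\mathbf R\Big(\mathbf R+\tfrac{n}{\lambda_z}\mathbf I_n\Big)^{-1}\mathbf R
=\mathbf R-\mathbf R\,\tilde{\mathbf R}^{-1}\mathbf R=\mathbf R_{z_d},
\]
and inverting both sides yields $\mathbf R_{z_d}^{-1}=\mathbf R^{-1}+\tfrac{\lambda_z}{n}\mathbf I_n$. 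Equivalently, one can avoid citing Woodbury and verify directly that the product of $\mathbf R_{z_d}$ with $\mathbf R^{-1}+\tfrac{\lambda_z}{n}\mathbf I_n$ equals $\mathbf I_n$, using the one-line identity $\mathbf R\,\tilde{\mathbf R}^{-1}\mathbf R=\mathbf R-\tfrac{n}{\lambda_z}\mathbf R\,\tilde{\mathbf R}^{-1}$, which comes from substituting $\mathbf R=\tilde{\mathbf R}-\tfrac{n}{\lambda_z}\mathbf I_n$ in the middle factor.

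For the second identity, I would again collect the definition of $K_{z_d}(\mathbf x,\mathbf x_i)$ over $i$ to obtain $\mathbf r_{z_d}^{T}(\mathbf x)=\mathbf r^{T}(\mathbf x)\big(\mathbf I_n-\tilde{\mathbf R}^{-1}\mathbf R\big)$. The key algebraic step is the resolvent identity $\mathbf I_n-\tilde{\mathbf R}^{-1}\mathbf R=\tfrac{n}{\lambda_z}\tilde{\mathbf R}^{-1}$, which is immediate from $\tilde{\mathbf R}=\mathbf R+\tfrac{n}{\lambda_z}\mathbf I_n$; this delivers the first equality $\mathbf r_{z_d}^{T}(\mathbf x)=\tfrac{n}{\lambda_z}\mathbf r^{T}(\mathbf x)\tilde{\mathbf R}^{-1}$. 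For the second equality I would reuse the computation from the first identity, namely $\mathbf R_{z_d}=\tfrac{n}{\lambda_z}\mathbf R\,\tilde{\mathbf R}^{-1}$, which rearranges to $\tfrac{n}{\lambda_z}\tilde{\mathbf R}^{-1}=\mathbf R^{-1}\mathbf R_{z_d}$, so that $\tfrac{n}{\lambda_z}\mathbf r^{T}(\mathbf x)\tilde{\mathbf R}^{-1}=\mathbf r^{T}(\mathbf x)\mathbf R^{-1}\mathbf R_{z_d}$. There is no real obstacle here beyond bookkeeping; the one point I would be careful about is the direction of the Woodbury application and keeping the ratio $n/\lambda_z$ versus $\lambda_z/n$ straight, since they appear in reciprocal roles in $\tilde{\mathbf R}$ and in the final inverse.
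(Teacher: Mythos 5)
Your proposal is correct and follows essentially the same route as the paper: both start from $\mathbf R_{z_d}=\mathbf R-\mathbf R\tilde{\mathbf R}^{-1}\mathbf R$ and $\mathbf r_{z_d}^T(\mathbf x)=\mathbf r^T(\mathbf x)-\mathbf r^T(\mathbf x)\tilde{\mathbf R}^{-1}\mathbf R$, then apply the Woodbury identity (equivalently, the resolvent identity $\mathbf I_n-\tilde{\mathbf R}^{-1}\mathbf R=\tfrac{n}{\lambda_z}\tilde{\mathbf R}^{-1}$) to obtain both claims. You simply spell out the algebraic steps the paper leaves implicit, and the details check out.
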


\begin{proof}[Proof of Theorem~\ref{thm:pred_SGP}]
The predictive mean is as follows 
\begin{align*}
\hat \mu_z(\mathbf x)&=\E[y^F(\mathbf x) \mid \mathbf y^F, \bm \theta, \sigma^2_0, \lambda, \lambda_z] \\
                                     &=f^M(\mathbf x, \bm \theta)+ \mathbf r_{z_d}(\mathbf x)^T \left( \mathbf R_{z_d} +n \lambda \mathbf I_n\right)^{-1} (\mathbf y^F-\mathbf f^M_{\bm \theta} ) \\
                                     &= f^M(\mathbf x, \bm \theta)+\mathbf r(\mathbf x)^T \mathbf R^{-1} \mathbf R_{z_d} \left( \mathbf R_{z_d} +n \lambda \mathbf I_n\right)^{-1} (\mathbf y^F-\mathbf f^M_{\bm \theta}) \\
                                      &= f^M(\mathbf x, \bm \theta)+\mathbf r(\mathbf x)^T \mathbf R^{-1} \left( \mathbf I_n +n\lambda \left( \mathbf R^{-1} +\frac{\lambda_z}{n} \mathbf I_n \right)^{-1}\right)(\mathbf y^F-\mathbf f^M_{\bm \theta}) \\
                                      &=f^M(\mathbf x, \bm \theta)+ \frac{\mathbf r(\mathbf x)^T}{(1+\lambda \lambda_z)} \left(\mathbf R^{-1}+\frac{n\lambda}{1+\lambda \lambda_z}\mathbf I_n\right)^{-1} (\mathbf y^F-\mathbf f^M_{\bm \theta}),
\end{align*}
where the last two equalities follow from (\ref{equ:r_D_z}) and (\ref{equ:R_D_z_inv}), respectively.    

The predictive variance can be obtained using (\ref{equ:r_D_z}) and (\ref{equ:R_D_z_inv}) as follows 
\begin{align*}
K^*_z(\mathbf x, \mathbf x)&= K_{z_d}(\mathbf x, \mathbf x)- \mathbf r^T_{z_d}(\mathbf x) \left( \mathbf R_{z_d} +n \lambda \mathbf I_n\right)^{-1} \mathbf r_{z_d}(\mathbf x) \\
                          &=K(\mathbf x, \mathbf x)-  \mathbf r^T(\mathbf x)  \mathbf {\tilde R}^{-1} \mathbf r(\mathbf x) \\
                          &\quad \quad - (1+\lambda \lambda_z)^{-1} \mathbf r(\mathbf x)^T \left(\mathbf R^{-1}+\frac{n\lambda}{1+\lambda \lambda_z}\mathbf I_n\right)^{-1}  \frac{n}{\lambda_z} \mathbf {\tilde R}^{-1} \mathbf r(\mathbf x) 
                          \end{align*}
                          from which the result follows. 

\end{proof}

\begin{proof}[Proof of Lemma~\ref{lemma:equivalence_mean_GP}]

When $\sigma^2_0=0$, the predictive mean is as follows
\begin{align*}
\E[y^F(\mathbf x) \mid \mathbf y^F, \bm \theta, \sigma^2_0, \lambda, \lambda_z]=&f^M(\mathbf x, \bm \theta)+ \mathbf r_{z_d}(\mathbf x)^T \mathbf R_{z_d}^{-1} (\mathbf y^F-\mathbf f^M_{\bm \theta}) \\
=&f^M(\mathbf x, \bm \theta)+\mathbf r(\mathbf x)^T \mathbf R^{-1} \mathbf R_{z_d}   \mathbf R_{z_d}^{-1} (\mathbf y^F-\mathbf f^M_{\bm \theta}) \\
=&f^M(\mathbf x, \bm \theta)+ \mathbf r(\mathbf x)^T \mathbf R^{-1} (\mathbf y^F-\mathbf f^M_{\bm \theta}).
\end{align*}
The predictive variance can be obtained similarly. 
\end{proof}

\end{document}